\newtheorem{thm}{Theorem}[section]
\newtheorem{defn}[thm]{Definition}
\newtheorem{lem}[thm]{Lemma}
\newtheorem{prop}[thm]{Proposition}
\newtheorem{exa}[thm]{Example}
\newtheorem{rmk}[thm]{Remark}
\newcommand{\diag}{\mathop{\mathrm{diag}}}
\newcommand{\Id}{\mathop{\mathrm{Id}}}
\newcommand{\id}{\mathop{\mathrm{id}}}
\begin{document}

\title{Riemann slice-domains over quaternions I}
\author{Xinyuan Dou}
\email[X.~Dou]{douxy@mail.ustc.edu.cn}
\author{Guangbin Ren}
\email[G.~Ren]{rengb@ustc.edu.cn}
\date{\today}
\address{Department of Mathematics, University of Science and Technology of China, Hefei 230026, China}
\keywords{Functions of hypercomplex variable; domains of holomophy; Riemann domains; quaternions; slice regular functions; representation formula}
\thanks{This work was supported by the NNSF of China (11771412)}

\subjclass[2010]{Primary: 30G35; Secondary: 32A30, 32D05, 32D26}

\begin{abstract}
	We construct a counterexample to a well-known extension theorem for slice regular functions, which motivates us to develop a theory of Riemann slice-domains by introducing a new topology on quaternions. By some paths describing axial symmetry in Riemann slice-domains, we rectify the classical extension formula in the theory of slice regular functions and prove a representation formula over slice-domains of regularity. This proof involves an intertwining relation between imaginary units of quaternions and a fixed matrix corresponding to a complex structure.
\end{abstract}

\maketitle

\section{Introduction}
The theory of slice regular functions is an extension of the theory of one complex variable to quaternions, which is initiated by  Gentili and  Struppa \cite{Gentili2006001,Gentili2007001} and  has been in full development in the last decade (see \cite{Colombo2011001B,Gentili2013001B,Alpay2016001B}). It has been generalized to Clifford algebras \cite{Gentili2008003,Colombo2009002,Colombo2015001,Ren2017001}, octonions \cite{Gentili2008002,Gentili2010001,Colombo2010003,Wang2017001}, and real alternative *-algebras \cite{Ghiloni2011001,Ghiloni20171001,Ghiloni20171002}. In particular, a theory of quaternionic operators \cite{Colombo2009003,Colombo2011001,Ghiloni2018001} based on slice regular functions provides rigorous mathematical tools for quaternionic quantum mechanics \cite{Ghiloni2013001}.

A key result in this theory is a so-called representation formula over axially symmetric domains \cite{Colombo2009001,Colombo2010001}. This formula recovers the values of a slice regular function on an axially symmetric domain from its values on a single slice complex plane. Consequently, several results from complex analysis can be extended easily to the theory of slice regular functions. Therefore, this formula plays an important role in many respects, including the power series \cite{Gentili2012001,Stoppato2012001}, sheaves \cite{Colombo2012001,Gentili2016001}, Schur analysis \cite{Alpay2012001} and quaternionic operators \cite{Alpay2015001,Alpay2016001,Colombo2018001}. It also induces the definition of slice functions in the theory of slice regular functions on real alternative *-algebras \cite{Ghiloni2011001,Ghiloni20171001,Ghiloni20171002}.

However, the domain of definition of a slice regular function may not be axially symmetric. An extension theorem in  \cite{Colombo2009001} guarantees that every slice regular function defined on a slice domain $\Omega$ can extend slice regularly to an axially symmetric domain, the axailly symmetric completion of $\Omega$. Unfortunately, the proof of this extension theorem uses an identity principle implicitly, without verifying the connectedness of the intersection of domains. There is a counterexample in Example \ref{th-ce}, which indicates that the skew field of quaternions is not ``large" enough for this extension theorem. In fact, the slice regular extension of a given slice regular function on a slice domain $\Omega$ might be a multivalued function, and there may not exist a branch of the multivalued function on the axially symmetric completion of $\Omega$.

Since this counterexample is a little intricate, we consider a simple classical example (see e.g. \cite[Problem 1.1]{Teleman2003001B}) to explain the motivation of introducing the concept of Riemann slice-domains. We can define the square root function
\begin{equation*}
f_\mathbb{R}:\mathbb{R}^+\rightarrow\mathbb{R}^+,\qquad x\mapsto\sqrt{x},
\end{equation*}
with no ambiguities, by selecting positive square roots, where $\mathbb{R}^+:=(0,+\infty)$.

But in the complex case, $f_\mathbb{R}$ can extend holomorphically to $f_\imath:\Omega_\imath\rightarrow\mathbb{C}$, $\imath=1,2$, respectively, where
\begin{equation*}
\Omega_\imath:=\mathbb{C}\backslash(\{0\}\cup(-1)^\imath i\mathbb{R}^+).
\end{equation*}
We notice that
\begin{equation*}
f_1(-x)=i\sqrt{x}\qquad\mbox{and}\qquad f_2(-x)=-i\sqrt{x},\qquad\forall\ x\in\mathbb{R}^+.
\end{equation*}
Therefore, the ``largest" holomorphic extension of $f_\mathbb{R}$ in $\mathbb{C}$ is a multivalued function. To find out a single-valued holomorphic extension of a given holomorphic function $f$, Weierstrass constructed the domain of existence of $f$ (which is a Riemann domain over $\mathbb{C}$, see \cite{Fritzsche2002001B}) by analytic continuations (mentioned in \cite[Page 67]{Teleman2003001B}).

In the quaternionic story, the skew field of quaternions $\mathbb{H}$ can be decomposed as
\begin{equation*}
\mathbb{H}=\bigcup_{I\in\mathbb{S}}\mathbb{C}_I.
\end{equation*}
where
\begin{equation*}
\mathbb{S}:=\{q\in\mathbb{H}:q^2=-1\}\qquad\mbox{and}\qquad\mathbb{C}_I:=\{x+yI:x,y\in\mathbb{R}\}.
\end{equation*}
Obviously, $f_\mathbb{R}$ can extend holomorphically to its domain of existence in $\mathbb{C}_I$ for each $I\in\mathbb{S}$. This implies that there also exists a multivalued slice regular extension of $f_\mathbb{R}$ on $\mathbb{H}\backslash\{0\}$.
Thence, we should develop in our quaternionic setting a theory of Riemann domains over $\mathbb{H}$ which Weierstrass has done in complex analysis.

Since the slice regularity of slice regular functions depends only on each slice complex plane, the Euclidean topology is not suitable for the definition of Riemann domains over $\mathbb{H}$. Instead, we introduce a new topology $\tau_s$ on $\mathbb{H}$, called the slice-topology (see Definition \ref{df-st}). This topology is strictly finer than the Euclidean topology (see Proposition \ref{pr-st}). Therefore a Euclidean connected set may not be connected in the slice-topology, e.g., a ball in the $\mathbb H$ may fail to be a domain in $(\mathbb H,\tau_s)$ (see Example \ref{ex-sd}). We define slice regular functions on open sets in $(\mathbb H,\tau_s)$ (see Definition \ref{df-sr}). Fortunately, the slice regularity of these functions is similar to the classical one. There also exist a splitting lemma and an identity principle for slice regular functions on domains in $(\mathbb H,\tau_s)$ (see Lemma \ref{lm-sp} and Theorem \ref{th-dps}).

Following the classical idea in complex analysis (see e.g. \cite[Pages 87-91]{Fritzsche2002001B}), we introduce a concept of Riemann slice-domains and prove some fundamental properties in Section \ref{sc-rsh}. To describe axial symmetry in Riemann slice-domains, we introduce finite-part paths, which in Riemann slice-domains play a similar role of the Cartesian coordinate in quaternions (see Section \ref{sc-rf}). In the classical theory of slice regular functions, points $x+yI$ and $x+yJ$ in $\mathbb{H}$ are treated as axially symmetric points, where $x,y\in\mathbb{R}$ and $I,J\in\mathbb{S}$. Now, we fix a certain path (a so-called $N$-part path, see Definition \ref{df-1npp}) $\gamma$ in $\mathbb{C}$, and choose some imaginary units $J_\imath\in\mathbb{S}^N$, $\imath=1,2,..,2^N$. Then we get liftings $\gamma^{J_\imath}_\mathcal{G}$, $\imath=1,2,..,2^N$, of the path $\gamma$ in $G$ (see Definition \ref{df:1lni} and Proposition \ref{pr-1gph}), for some Riemann slice-domain $\mathcal{G}=(G,\pi,x_0)$. And we treat $\gamma^{J_\imath}_\mathcal{G}(1)$, $\imath=1,2,..,2^N$, as axially symmetric points. Due to the topological intricacy of Riemann slice-domains, the representation formula demands $2^N$ axially symmetric points (see Theorem \ref{th-dere}), while the classical one \cite[Theorem 3.2]{Colombo2009001} needs only two such points.

We introduce real-connected sets in Riemann slice-domains (see \ref{df-rc}) and then prove that for any two points in Riemann slice-domains can be connected by a finite-part path (see Theorem \ref{th-efpsp}). We also define the unions of Riemann slice-domains and envelopes of regularity in Sections \ref{sc-urs} and \ref{sc-esh}, following \cite[Pages 91-100]{Fritzsche2002001B}.

We rectify the classical extension formula in the theory of slice regular functions (see Proposition \ref{pr-xfh}). And then we prove a new representation formula over slice-domains of regularity (see Theorem \ref{th-dere}), by some technical lemmas (see Lemmas \ref{le-ct} and \ref{pr-ct}). In particular, if the slice-domain of regularity is an axially symmetric slice domain in $\mathbb{H}$, our representation formula is the same as the classical one (see Remark \ref{rm-sdr}).

In the coming article \cite{Dou2018002}, we will generalize the representation formula from slice-domains of regularity to Riemann slice-domains. This result allows us to extend the $*$-product to some suitable Riemann slice-domains by introducing holomorphic stem systems and tensor holomorphic functions. In the theory of slice-regular functions, the $*$-product is introduced in \cite{Gentili2008001} and generalized \cite{Colombo2009001} to axially symmetric slice domains in $\mathbb{H}$ with important applications \cite{Alpay2013001,Gentili2014001}. It is difficult to discriminate on which Riemann slice-domains we can introduce the $*$-product of slice regular functions. In fact, it is related to analytic continuations, which in the case of several complex variables initiates the theories of analytic spaces and sheaf cohomology.

\section{A counterexample of the extension theorem over quaternions}\label{sc-ceet}
In this section, we give a counterexample of the extension theorem (see \cite[Theorem 4.1]{Colombo2009001}). This counterexample shows that the slice regular extension also initiates the multivalued functions in $\mathbb{H}$ and the Euclidean topology is not suitable for analytic continuations. Thence we introduce the concept of Riemann slice-domains in Sections \ref{sc-rsh}, by a new topology on quaternions (see Section \ref{sc-sdh}).

Let $\mathbb{S}$ be the 2-sphere of imaginary units of quaternions $\mathbb{H}$, i.e.,
\begin{equation*}
\mathbb{S}:=\{q\in\mathbb{H}:q^2=-1\}.
\end{equation*}
For each $I\in\mathbb{S}$, let
\begin{equation*}
\mathbb{C}_I:=\{x+yI:x,y\in\mathbb{R}\}
\end{equation*}
be the complex slice plane of $\mathbb{H}$ containing $I$, and the topology of $\mathbb{C}_I$ be the Euclidean topology. The skew field of quaternions $\mathbb{H}$ can be decomposed as
\begin{equation*}
\mathbb{H}=\bigcup_{I\in\mathbb{S}}\mathbb{C}_I.
\end{equation*}
For each subset $U$ of $\mathbb{H}$ and $I\in\mathbb{S}$, we set
\begin{equation*}
U_\mathbb{R}:=U\cap\mathbb{R}\qquad\mbox{and}\qquad U_I:=U\cap\mathbb{C}_I.
\end{equation*}

\begin{defn}
	Let  $I\in\mathbb{S}$	and  $\Omega$  be an open set in $\mathbb{C}_I$.  A function $f:\Omega\rightarrow\mathbb{H}$ is said to be (left) holomorphic, if f has continuous partial derivatives and satisfies
	\begin{equation*}
	\bar\partial_I f(x+yI):=\frac{1}{2}(\frac{\partial}{\partial x}+I \frac{\partial}{\partial y}) f(x+yI)=0\qquad\mbox{on}\qquad\Omega.
	\end{equation*}
\end{defn}

In the following, we endow $\mathbb{H}$ with the Euclidean topology.

\begin{defn} \label{def:slicefun}
	Let $\Omega$ be a domain in $\mathbb{H}$. A function $f:\Omega\rightarrow\mathbb{H}$ is said to be (left) slice regular if, for each $I\in\mathbb{S}$, the restriction of f to $\Omega_I$ denoted by $f_I$ is left holomorphic.
\end{defn}

\begin{defn} \label{def:slice-domain}
	Let $\Omega$ be a domain in $\mathbb{H}$. We say that $\Omega$ is a slice domain if, $\Omega\cap\mathbb{R}$ is nonempty and $\Omega_I$ is a domain in $\mathbb{C}_I$ for all $I\in\mathbb{S}$.
\end{defn}

\begin{thm}\label{th-etq}
	(\cite[Theorem 4.1]{Colombo2009001}, Extension Theorem)
	Let $\Omega\subseteq\mathbb{H}$ be an slice domain, and let $f:\Omega\rightarrow\mathbb{H}$ be a slice regular function. There exists a unique slice regular extension $\widetilde{f}:\widetilde{\Omega}\rightarrow\mathbb{H}$, where $$\widetilde{\Omega}:=\bigcup_{x+yI\in\Omega}x+y\mathbb{S}$$
	is the axially symmetric completion of $\Omega$.
\end{thm}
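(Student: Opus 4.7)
The plan is to use a representation-formula construction anchored on one complex slice, then verify slice regularity, compatibility with $f$, and uniqueness by identity-principle arguments slice by slice.

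First, I would fix $I\in\mathbb{S}$ and examine the restriction $f_I:=f|_{\Omega_I}$, which is holomorphic on the connected planar domain $\Omega_I$ and meets the real axis non-trivially. To set up the formula I need $f_I$ to be defined on the reflected set $\{x-yI:x+yI\in\Omega\}$ as well, and for this I would invert the putative representation formula along auxiliary slices: if $x+yI\in\Omega_I$ and also $x+yJ\in\Omega$ for some $J\neq I$, then the expected identity
\begin{equation*}
f(x+yJ)=\tfrac{1}{2}\bigl[f_I(x+yI)+f_I(x-yI)\bigr]+\tfrac{JI}{2}\bigl[f_I(x-yI)-f_I(x+yI)\bigr]
\end{equation*}
can be solved for $f_I(x-yI)$, since $1+JI$ is invertible whenever $J\neq I$. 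Consistency of this definition (independence of the auxiliary unit $J$) would have to be justified by the identity principle applied slice by slice.

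With a symmetrized $f_I$ in hand, I would define
\begin{equation*}
\widetilde{f}(x+yK):=\tfrac{1}{2}\bigl[f_I(x+yI)+f_I(x-yI)\bigr]+\tfrac{KI}{2}\bigl[f_I(x-yI)-f_I(x+yI)\bigr]
\end{equation*}
for $K\in\mathbb{S}$ and $x+yK\in\widetilde{\Omega}$. Slice holomorphicity of $\widetilde{f}_K$ on $\widetilde{\Omega}_K$ would follow from a direct $\bar\partial_K$ computation using only $\bar\partial_I f_I=0$. To confirm $\widetilde{f}|_\Omega=f$, I would first check equality on $\Omega\cap\mathbb{R}$, where both sides collapse to $f(x)$, and then use that on each planar slice $\Omega_K$ the two functions $f_K$ and $\widetilde{f}_K$ are holomorphic and agree on the non-empty real interval $\Omega_K\cap\mathbb{R}$, hence throughout the connected planar domain $\Omega_K$ by the identity principle. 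Uniqueness of $\widetilde{f}$ would follow by the same identity-principle argument applied to any two candidate extensions.

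The main obstacle, and precisely the step that the counterexample in Section \ref{sc-ceet} will exploit, is the repeated reliance on the identity principle, which silently requires connectedness of several intersections. The construction above tacitly assumes that the value $f_I(x-yI)$ is unambiguously determined by $f$ on any single auxiliary slice $\Omega_J$, and that $\widetilde{\Omega}_K$ behaves connectedly enough relative to $\Omega_K$ for agreements to be transported. When $\Omega$ is a slice domain whose axial symmetry links distinct slices through topologically incompatible paths, the defining formula depends on the choice of auxiliary unit $J$ and forces $\widetilde{f}$ to be genuinely multivalued. Resolving this phenomenon is exactly what motivates the passage from $\mathbb{H}$ with its Euclidean topology to the Riemann slice-domain framework of the paper.
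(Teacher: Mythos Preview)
The paper does not prove this statement at all: Theorem~\ref{th-etq} is merely quoted from \cite{Colombo2009001}, and the entire point of Section~\ref{sc-ceet} is to show that the statement is \emph{false} as written. Example~\ref{th-ce} exhibits a slice domain $\Omega$ and a slice regular function $G$ on it for which no slice regular extension to $\widetilde{\Omega}$ exists. So there is no ``paper's own proof'' to compare against.

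Your proposal is therefore not a proof of a true theorem but a reconstruction of the flawed argument from \cite{Colombo2009001}, together with a correct diagnosis of where it breaks. You identify the gap precisely: the identity-principle steps require connectedness of the relevant planar intersections (e.g.\ that $\Omega_K$ and $\widetilde{\Omega}_K$ meet in a connected set, or that the auxiliary-slice definition of $f_I(x-yI)$ is independent of $J$), and these hypotheses are not guaranteed by the definition of a slice domain. The counterexample in Example~\ref{th-ce} is engineered exactly so that different auxiliary slices prescribe incompatible values, forcing the ``extension'' to be multivalued. Your closing paragraph already says this, so you should not present the earlier part as a proof but rather as an explanation of why the original argument is invalid---which is consistent with the paper's purpose.
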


Let $i$ be an imaginary unit in the complex field $\mathbb{C}$. We consider the field isomorphism  $P_I:\mathbb{C}\rightarrow\mathbb{C}_I$, defined by
\begin{equation*}
P_I(x+yi):=x+yI,\qquad\forall\ x,y\in\mathbb{R}\ \mbox{and}\ I\in\mathbb{S}.
\end{equation*}
Then, for each $I,J\in\mathbb{S}$,
\begin{equation*}
P_I^J:=P_J\circ P_I^{-1}:\mathbb{C}_I\rightarrow\mathbb{C}_J
\end{equation*}
is also an isomorphism.

For each $x\in\mathbb{R}$, we set
\begin{equation*}
\lfloor x\rfloor:=\max\{N\in\mathbb{Z}:N\le x\}
\end{equation*}
the floor integral part of $x$,
\begin{equation*}
\lceil x\rceil:=\min\{N\in\mathbb{Z}:N\ge x\}
\end{equation*}
the ceiling integral part of $x$, and
\begin{equation*}
\{x\}:=x-\lfloor x\rfloor
\end{equation*}
the fractional part of $x$.

A path in a topological space $X$ is a continuous function $f$ from the unit interval $[0,1]$ to $X$.
We fix $I\in\mathbb{S}$ and let $\gamma_0,\gamma_1$ be two (continuous) paths in $\mathbb{C}_I$, defined by
\begin{equation*}
\gamma_\imath (t):=\left\{
\begin{split}
&-1+2I+e^{(-1)^\imath 2\pi I t}, \qquad&&t\in[0,\frac{1}{2}],
\\&2I-(1-t)^{-1}, &&t\in(\frac{1}{2},1],
\end{split}
\right.\qquad\imath=0,1.
\end{equation*}

We set paths
\begin{equation*}
\gamma_s:=(1-s)\gamma_0+s\gamma_1,\qquad\forall\ s\in[0,1],
\end{equation*}
and  define a function
\begin{equation*}
g:\mathbb{R}^+ +\{2I\}\rightarrow\mathbb{R},\qquad x+2I\mapsto\ln(x), \qquad\forall\ x\in\mathbb R^+,
\end{equation*}
where $\ln:\mathbb{R}^+\rightarrow\mathbb{R}$ is the natural logarithm function,
\begin{equation*}
\mathbb{R}^+:=\{x\in\mathbb{R}:x>0\}\qquad\mbox{and}\qquad\mathbb{R}^+ +\{2I\}:=\{x+2I\in\mathbb{H}:x\in\mathbb{R}^+\}.
\end{equation*}

For each $t\in[0,1]$, we consider
$$f_t:\mathbb{C}_I\backslash\gamma_t\rightarrow\mathbb{C}_I,$$
which is the holomorphic extension of $g$.
By the identity principle, we have
\begin{equation*}
f_s|_{\mathbb{R}}=f_t|_{\mathbb{R}}=:f_{\mathbb{R}}, \qquad\forall\  s,t\in[0,1].
\end{equation*}

For each $J\in\mathbb{S}$, we set
\begin{equation*}
T(J):=\min\{|J-I|,1\}\qquad\mbox{and}\qquad\mathbb{C}_J^+:=\{x+yJ\in\mathbb{C}_J:x,y\in\mathbb{R},y\ge 0\},
\end{equation*}
and let $U_J^+$ be the subset of $\mathbb{C}_J^+$ by removing a path corresponding to  $\gamma_{T(J)}$, i.e.,
\begin{equation*}
U_J^+:=\mathbb{C}_J^+\backslash P_I^J (\gamma_{T(J)}).
\end{equation*}
For each $J\in\mathbb{S}$,  we define a function on $U_J^+$
\begin{equation*}
F_J^+(x+yJ):=\frac{1-JI}{2}f_{T(J)}(x+yI)+\frac{1+JI}{2}f_{T(J)}(x-yI)
\end{equation*}
for each $x,y\in\mathbb{R}$ with $x+yJ\in U_J^+$.
By direct calculation (see the proof of \cite[Theorem 3.2]{Colombo2009001}), $F_J^+$ is a holomorphic extension of $f_\mathbb{R}:\mathbb{R}^+\rightarrow\mathbb{H}$ on $U_J^+$ in the complex slice plane $\mathbb{C}_J$. Then there exists a slice regular extension $G:\Omega\rightarrow\mathbb{H}$ of $f_{\mathbb{R}}$ on
\begin{equation*}
\Omega:=\bigcup_{J\in\mathbb{S}}U_J^+,
\end{equation*}
defined by
\begin{equation}\label{eq-ce}
G|_{U_J^+}:=F_J^+,\qquad\forall J\in\mathbb{S}.
\end{equation}
Note that
$T:\mathbb{S}\rightarrow[0,1]$ and $\gamma:[0,1]^2\rightarrow\mathbb{H}$ are continuous, where
\begin{equation*}
\gamma(s,t):=\gamma_s(t).
\end{equation*}
It follows that $$\mathbb{H}\backslash\Omega=\bigcup_{{J\in\mathbb{S}}} P_I^J(\gamma_{T(J)})$$ is closed in $\mathbb{H}$. Thus $\Omega$ is an open set in $\mathbb{H}$. We notice that $$\Omega\cap\mathbb{C}_J=U_J^+\cup U_{-J}^+$$ is a domain in $\mathbb{C}_J$, and since $\mathbb{R}\subset\Omega\cap\mathbb{C}$, then $\Omega$ is path-connected. Therefore, $\Omega$ is a slice domain in $\mathbb{H}$.

\begin{exa}\label{th-ce}
	(A counterexample to  Theorem \ref{th-etq}) There is no slice regular extension on $\widetilde\Omega$ of the slice regular function $G$.
\end{exa}

\begin{proof}
	Suppose Theorem \ref{th-etq} holds. This means there exists a slice regular extension $\widetilde{G}:\widetilde{\Omega}\rightarrow\mathbb{H}$ of $G$. Let $\mathcal{L}:[0,1]\rightarrow\mathbb{C}_I$ be a path in $\mathbb{C}_I$, denoted by
	\begin{equation*}
	\mathcal{L}(t):=2I-1-1/t,\qquad\forall\ t\in(0,1].
	\end{equation*}
	We set
	\begin{equation*}
	A:=\mathcal{L}([0,1])\cup\{2I\}.
	\end{equation*}
	According to the fact in complex analysis, the function $g$ can not extend holomorphically to $\mathbb{C}_I^+\backslash A$. However, $\widetilde{G}$ is a holomorphic extension of $g$ and $\mathbb{C}_I^+\backslash A\subset\widetilde{\Omega}\cap\mathbb{C}_I$, which is a contradiction.
\end{proof}

\section{Slice-domains in $\mathbb{H}$}\label{sc-sdh}
In this section, we will define slice-domains by introducing a new topology. In the classical case, a slice regular function is defined on a domain $\Omega$ in $\mathbb{H}$, see Definition \ref{def:slicefun}. However, the analyticity of this function only depends on each slice complex plane $\Omega_I:=\Omega\cap\mathbb{C}_I$ of $\Omega$, where $I\in\mathbb{S}$. Consequently, there is little relevance between domains in $\mathbb{H}$ and analytic continuations, e.g., the domain of convergence for a power series is not a domain in $\mathbb{H}$, when its center is not on $\mathbb{R}$ (see \cite[Theorem 8]{Gentili2012001}). In order to define the concept of Riemann slice-domains in Section \ref{sc-rsh},  we now introduce a new topology on $\mathbb{H}$, the so-called slice-topology.

For each $I\in\mathbb{S}$, let
\begin{equation*}
\mathbb{C}'_I:=(\mathbb{C}_I,I)=\{(z,I):z\in\mathbb{C}_I\}
\end{equation*}
be a field, and we set a surjective map
\begin{equation*}
\varphi:\bigsqcup_{J\in\mathbb{S}} \mathbb{C}'_J\rightarrow\mathbb{H},\qquad (q,K)\mapsto q,\qquad\forall\ K\in\mathbb{S}\ \mbox{and}\ q\in\mathbb{C}_K.
\end{equation*}
Obviously, for each $I\in\mathbb{S}$, $P_I^{-1}\circ\varphi|_{\mathbb{C}_I'}$ is a field isomorphism from $\mathbb{C}_I'$ to $\mathbb{C}$. For each $I\in\mathbb{S}$, let
\begin{equation*}
\tau(\mathbb{C}'_I):=\{\varphi|_{\mathbb{C}_I'}^{-1}(P_I(U)):U\in\tau(\mathbb{C})\}
\end{equation*}
be a topology induced on $\mathbb{C}_I'$ by $P_I^{-1}\circ\varphi|_{\mathbb{C}_I'}$, where $\tau(\mathbb{C})$ is the Euclidean topology of $\mathbb{C}$. And let $\tau(\sqcup_{I\in\mathbb{S}} \mathbb{C}'_I)$ be the disjoint union topology. Let $\tau_s(\mathbb{H})$ be the quotient space topology induced by $\varphi$. Then $\varphi$ is the quotient map.

\begin{defn}\label{df-st}
	We call the topology $\tau_s$, the slice-topology of $\mathbb{H}$.
\end{defn}

Open sets, connectedness and paths in the slice-topology are  called respectively as slice-open sets, slice-connectedness and slice-paths, and so on.

\begin{prop} \label{pr-sdo}
	A subset $\Omega$ of $\mathbb{H}$ is slice-open  if and only if, $\Omega_I$ is open in $\mathbb{C}_I$ for each $I\in\mathbb{S}$.
\end{prop}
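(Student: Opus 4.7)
The plan is to unwind the three successive constructions used to build $\tau_s$---the topology on each $\mathbb{C}'_I$, the disjoint union topology on $\bigsqcup_{J\in\mathbb{S}}\mathbb{C}'_J$, and finally the quotient topology under $\varphi$---and observe that each step becomes transparent once we note that $\varphi|_{\mathbb{C}'_I}:\mathbb{C}'_I\to\mathbb{C}_I$ is a homeomorphism, where $\mathbb{C}_I$ carries the Euclidean topology.

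First I would invoke the defining property of the quotient topology: $\Omega\subseteq\mathbb{H}$ belongs to $\tau_s(\mathbb{H})$ if and only if $\varphi^{-1}(\Omega)$ is open in $\bigsqcup_{J\in\mathbb{S}}\mathbb{C}'_J$. Next, by the defining property of the disjoint union topology, this holds if and only if $\varphi^{-1}(\Omega)\cap\mathbb{C}'_I$ is open in $\mathbb{C}'_I$ for every $I\in\mathbb{S}$.

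The third step is to identify $\varphi^{-1}(\Omega)\cap\mathbb{C}'_I$ with $\Omega_I$. Since $\varphi|_{\mathbb{C}'_I}$ is a bijection onto $\mathbb{C}_I$, we have $\varphi^{-1}(\Omega)\cap\mathbb{C}'_I=(\varphi|_{\mathbb{C}'_I})^{-1}(\Omega_I)$. By the very definition
\[
\tau(\mathbb{C}'_I)=\{(\varphi|_{\mathbb{C}'_I})^{-1}(P_I(U)):U\in\tau(\mathbb{C})\},
\]
the map $P_I^{-1}\circ\varphi|_{\mathbb{C}'_I}:\mathbb{C}'_I\to\mathbb{C}$ is a homeomorphism, and $P_I:\mathbb{C}\to\mathbb{C}_I$ is a homeomorphism in the Euclidean topology. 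Hence $(\varphi|_{\mathbb{C}'_I})^{-1}(\Omega_I)\in\tau(\mathbb{C}'_I)$ if and only if $\Omega_I$ is open in $\mathbb{C}_I$. Chaining the equivalences yields the proposition.

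The statement is essentially tautological once the three topological definitions are stacked in the correct order, so there is no genuine obstacle; the only care required is to keep the bookkeeping straight between $\mathbb{C}'_I$, $\mathbb{C}_I$, and $\mathbb{C}$, and in particular to record that $\varphi|_{\mathbb{C}'_I}$ is a bijection (not merely a surjection) onto $\mathbb{C}_I$, which is what lets us pass from $\varphi^{-1}(\Omega)\cap\mathbb{C}'_I$ back to $\Omega_I$ without losing information.
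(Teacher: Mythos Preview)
Your argument is correct and is essentially the same as the paper's: both unwind the quotient topology, then the disjoint union topology, then the identification $\varphi^{-1}(\Omega)\cap\mathbb{C}'_I=(\Omega_I,I)$. The only cosmetic difference is that you package the two implications as a single chain of equivalences, while the paper writes them out separately.
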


\begin{proof}
	1. Suppose $\Omega$ is a slice-open set in $\mathbb{H}$. Since the quotient map $\varphi$ is continuous, it follows that $\varphi^{-1}(\Omega)$ is open in $\sqcup_{I\in\mathbb{S}}\mathbb{C}'_I$. Thus for each $I\in\mathbb{S}$, $\varphi^{-1}(\Omega)\cap\mathbb{C}'_I$ is open in $\mathbb{C}'_I=(\mathbb{C}_I,I)$. We notice that
	\begin{equation*}
	(\Omega_I,I)=\varphi^{-1}(\Omega)\cap\mathbb{C}'_I,
	\end{equation*}
	then $\Omega_I$ is open in $\mathbb{C}_I$.
	
	2. Suppose for each $I\in\mathbb{S}$, $\Omega_I$ is open in $\mathbb{C}_I$. We notice that
	\begin{equation*}
	\varphi^{-1}(\Omega)\cap\mathbb{C}'_I=(\Omega_I,I)
	\end{equation*}
	is open in $\sqcup_{J\in\mathbb{S}} \mathbb{C}'_J$. It follows that
	\begin{equation*}
	\varphi^{-1}(\Omega)=\bigsqcup_{J\in\mathbb{S}}(\varphi^{-1}(\Omega)\cap\mathbb{C}'_J)
	\end{equation*}
	is open in $\sqcup_{J\in\mathbb{S}} \mathbb{C}'_J$. It indicates that $\Omega$ is slice-open in $\mathbb{H}$.
\end{proof}

According to Proposition \ref{pr-sdo}, for each open set $U$ in $\mathbb{H}$, $U$ is slice-open. Therefore slice topology is Hausdorff.

Fix $I\in\mathbb{S}$. For any $J\in\mathbb{S}$, we consider an ellipse in $\mathbb{C}_J$, defined by
\begin{equation*}
U_J:=\left\{
\begin{aligned}
&\{x+yJ\in\mathbb{C}_J:x^2+\frac{y^2}{\mbox{dist}(J,\mathbb{C}_I)}<1\},\qquad&&J\neq\pm I,
\\&\{x+yJ\in\mathbb{C}_J:x^2+y^2<1\}, &&J=\pm I,
\end{aligned}\right.
\end{equation*}
where $\mbox{dist}(J,\mathbb{C}_I)$ is the Euclidean distance from $J$ to $\mathbb{C}_I$. Thanks to Proposition \ref{pr-sdo},
\begin{equation*}
U:=\bigcup_{J\in\mathbb{S}}U_J
\end{equation*}
is a slice-domain including $0$ in $\mathbb{H}$. However, $0$ is not in the Euclidean interior of $U$ (the semi-minor axis $\sqrt{\mbox{dist}(J,\mathbb{C}_I)}$ of $U_J$ tends to zero when $J$ approaches $I$), which means that $U$ is not open in $\mathbb{H}$. In summary, the slice topology is strictly finer than the Euclidean topology.

\begin{prop}\label{pr-st}
	$(\mathbb{H},\tau_s)$ is a Hausdorff space, and $\tau\subsetneq\tau_s$, where $\tau$ is the Euclidean topology of $\mathbb{H}$.
\end{prop}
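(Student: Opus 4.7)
The proof naturally splits into three claims: (i) the Euclidean topology is contained in the slice-topology, (ii) the inclusion is strict, and (iii) $(\mathbb{H},\tau_s)$ is Hausdorff. My plan is to dispatch (i) and (iii) directly from \emph{Proposition \ref{pr-sdo}}, and to verify (ii) by exhibiting a slice-open set that is not Euclidean-open (exactly the ellipse-union $U$ constructed in the paragraphs preceding the statement).

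For (i), I would fix an arbitrary Euclidean-open $\Omega\subseteq\mathbb{H}$ and note that for every $I\in\mathbb{S}$, the intersection $\Omega_I=\Omega\cap\mathbb{C}_I$ is open in $\mathbb{C}_I$ with its Euclidean topology, since taking intersection with a subspace preserves openness. Proposition \ref{pr-sdo} then yields $\Omega\in\tau_s$. Thus $\tau\subseteq\tau_s$. For (iii), the inclusion $\tau\subseteq\tau_s$ transfers the Hausdorff property for free: given distinct $p,q\in\mathbb{H}$, pick disjoint Euclidean-open neighbourhoods of them, which are slice-open by (i), so the Hausdorff axiom is satisfied in $\tau_s$ as well.

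For (ii), the strict inclusion, I would reuse the example introduced just before the statement, namely $U:=\bigcup_{J\in\mathbb{S}}U_J$ where $U_J$ is the ellipse in $\mathbb{C}_J$ with semi-axes depending on $\mathrm{dist}(J,\mathbb{C}_I)$. Each $U\cap\mathbb{C}_J=U_J$ is (a standard Euclidean-) open subset of $\mathbb{C}_J$, so Proposition \ref{pr-sdo} gives $U\in\tau_s$. To see $U\notin\tau$, I would verify that $0\in U$ is not an interior point in the Euclidean sense: any Euclidean ball around $0$ contains points $x+yJ$ with $J$ arbitrarily close to $I$ and $|y|$ bounded below by a positive constant, whereas for such $J$ the semi-minor axis $\sqrt{\mathrm{dist}(J,\mathbb{C}_I)}$ of $U_J$ is strictly less than $|y|$, so such points lie outside $U$. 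This produces the desired separation $\tau\subsetneq\tau_s$.

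No step is genuinely delicate; the only mild care needed is in (ii), where one must make precise the claim that the ellipses $U_J$ collapse faster than any Euclidean ball around $0$ as $J\to I$. This amounts to observing $\sqrt{\mathrm{dist}(J,\mathbb{C}_I)}\to 0$ while a fixed Euclidean ball around $0$ still reaches a uniform height on nearby slices, and is handled by a one-line estimate rather than a substantive argument.
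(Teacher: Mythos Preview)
Your proposal is correct and follows essentially the same approach as the paper: the paper establishes all three claims in the prose immediately preceding the proposition (using Proposition \ref{pr-sdo} for $\tau\subseteq\tau_s$ and hence Hausdorffness, and the ellipse-union $U$ for strictness), and you have simply organized that same argument into a clean three-part structure. One tiny quibble: your claim $U\cap\mathbb{C}_J=U_J$ is not literally correct since $\mathbb{C}_{-J}=\mathbb{C}_J$ contributes $U_{-J}$ as well, but $U\cap\mathbb{C}_J=U_J\cup U_{-J}$ is still open in $\mathbb{C}_J$, so the conclusion stands.
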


For each $q\in\mathbb{H}$, $z\in\mathbb{C}$, $r\in\mathbb{R}^+$, and $I\in\mathbb{S}$, we set
\begin{equation*}
\begin{split}
B_{\mathbb{H}}(q,r):=\{p\in\mathbb{H}:|p-q|<r\},\qquad
B_{\mathbb{C}}(z,r):=\{p\in\mathbb{C}:|p-z|<r\},
\\
B_I(q,r):=\mathbb{B}_\mathbb{H}(q,r)\cap\mathbb{C}_I\qquad \mbox{and}\qquad B_{\mathbb{R}}(q,r):=\{p\in\mathbb{R}:|p-q|<r\}.
\end{split}
\end{equation*}

According to Proposition \ref{pr-st}, the slice-topology is strictly finer than the Euclidean topology. It follows that a Euclidean connected set may not connected in slice-topology.

\begin{exa}\label{ex-sd}
	We take the ball $\Omega:=B_{\mathbb H}(I, 1/2)$ as an example. It is a slice-open set according to Proposition \ref{pr-sdo}. However, it is not slice-connected since
its can be partitioned into disjoint open sets
$$\Omega=\Omega_I   \bigsqcup  \left(\bigcup_{J\in\mathbb S\setminus\{\pm I\}} \Omega_J\right). $$
This means that the ball $B_{\mathbb H}(I, 1/2)$ is not a slice-domain in $\mathbb H$.
\end{exa}

Notice that the subspace topology on $\mathbb{R}$ induced by $\tau_s$ coincides with the Euclidean topology on $\mathbb{R}$, thereby slice-connectedness and connectedness coincide in $\mathbb{R}$.

\begin{defn} 
	A subset $U$ of $\mathbb{H}$ is called real-connected, if $U_{\mathbb{R}}$ is connected in $\mathbb{R}$.
\end{defn}

We remark that the empty set is always taken to be a connected set in this paper.

\begin{prop} \label{pr-rc1} 
	Let $U$ be a slice-open set in $\mathbb{H}$. Then for each $q\in U$, there exists a real-connected slice-domain $V\subset U$ containing $q$.
\end{prop}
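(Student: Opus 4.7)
The plan is to split into two cases based on whether $q\in\mathbb{R}$. When $q\notin\mathbb{R}$, pick $I_0\in\mathbb{S}$ with $q\in\mathbb{C}_{I_0}$ and set $V:=B_{I_0}(q,r)$ for some $r>0$ chosen small enough that $B_{I_0}(q,r)\subset U_{I_0}$ (possible since $U_{I_0}$ is Euclidean-open in $\mathbb{C}_{I_0}$ by Proposition \ref{pr-sdo}) and $r<\mathrm{dist}(q,\mathbb{R})$. Then $V_{I_0}=V$ is Euclidean-open in $\mathbb{C}_{I_0}$, while $V_J=V\cap\mathbb{R}=\emptyset$ for $J\neq\pm I_0$; hence $V$ is slice-open by Proposition \ref{pr-sdo}. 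For slice-connectedness, a hypothetical decomposition $V=A\sqcup B$ into nonempty disjoint slice-open subsets would, by restricting to $\mathbb{C}_{I_0}$ and again using Proposition \ref{pr-sdo}, yield a nontrivial partition of $V$ into Euclidean-open pieces of $\mathbb{C}_{I_0}$, contradicting Euclidean-connectedness of the disk $V$. Finally $V\cap\mathbb{R}=\emptyset$ is trivially connected.

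The case $q\in\mathbb{R}$ is the heart of the proof. Since $U\cap\mathbb{R}$ is open in $\mathbb{R}$, let $(a,b)$ denote the connected component of $U\cap\mathbb{R}$ containing $q$. The naive attempt $V=\bigcup_{I\in\mathbb{S}}B_I(q,r_I)$ fails to be slice-open: for a fixed slice $\mathbb{C}_J$, the union contributes open disks only through $I=\pm J$, while every other $I$ contributes only a real interval whose length $r_I$ may exceed $r_J$, so $V_J$ is not Euclidean-open at the extremes of those longer intervals. The remedy is to center balls at every real point of $(a,b)$ and to trim their radii so that their real projections stay inside $(a,b)$: for each $x\in(a,b)$ and each $I\in\mathbb{S}$, choose $r_I(x)>0$ with $B_I(x,r_I(x))\subset U_I$ and $(x-r_I(x),x+r_I(x))\subset(a,b)$, and set
\[
V:=\bigcup_{x\in(a,b)}\bigcup_{I\in\mathbb{S}}B_I(x,r_I(x)).
\]

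Verifying the required properties of $V$ is then straightforward. Containment $V\subset U$ and $q\in V$ are immediate, and the radius restriction gives $V\cap\mathbb{R}=(a,b)$, which is real-connected. For slice-openness, fix $J\in\mathbb{S}$ and $y\in V_J$: if $y\notin\mathbb{R}$, then $y\in B_I(x,r_I(x))\cap\mathbb{C}_J$ forces $I=\pm J$ (since otherwise $\mathbb{C}_I\cap\mathbb{C}_J=\mathbb{R}$), so $y$ has a Euclidean disk neighborhood inside $V_J$; if $y\in\mathbb{R}$, then $y\in(a,b)$ and the ball $B_J(y,r_J(y))\subset V_J$ is a Euclidean neighborhood of $y$. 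For slice-connectedness, note that $V_J=\bigcup_{x\in(a,b)}B_J(x,r_J(x))$ is a union of overlapping Euclidean disks in $\mathbb{C}_J$ each meeting the connected real interval $(a,b)$, hence Euclidean-connected; the same argument as in Case 1 promotes this to slice-connectedness of each $V_J$, and since all $V_J$ share the slice-connected set $(a,b)$, the union $V=\bigcup_J V_J$ is slice-connected.

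The main obstacle is the subtle tension between slice-openness and real-connectedness when $q\in\mathbb{R}$: slice-openness at a real point $y\in V$ forces $V$ to contain a Euclidean disk around $y$ in every slice $\mathbb{C}_J$, which naively threatens to enlarge $V\cap\mathbb{R}$ beyond $(a,b)$; balancing this by placing centers at every $x\in(a,b)$ (not only at $q$) and simultaneously trimming radii so that their real projections stay inside $(a,b)$ is the key constructive idea that makes all three conditions hold at once.
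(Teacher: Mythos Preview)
Your proof is correct but takes a genuinely different route from the paper. The paper handles both cases uniformly in two lines: let $A$ be the connected component of $U_{\mathbb{R}}$ containing $q$ (or $A=\varnothing$ if $q\notin\mathbb{R}$), observe that $((U\setminus U_{\mathbb{R}})\cup A)\cap\mathbb{R}=A$ is connected, and take $V$ to be the slice-connected component of $(U\setminus U_{\mathbb{R}})\cup A$ containing $q$. No balls, no case split, no radius trimming. Your construction instead builds $V$ explicitly from disks in each slice. The paper's argument is shorter and more conceptual, but it tacitly uses that slice-connected components of slice-open sets are slice-open (local connectedness of $(\mathbb{H},\tau_s)$, established only later in Proposition~\ref{pr-lp}); your hands-on construction sidesteps that dependency by exhibiting openness and connectedness directly. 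One small inaccuracy in your write-up: the claimed identity $V_J=\bigcup_{x\in(a,b)}B_J(x,r_J(x))$ omits the $I=-J$ contributions $B_J(x,r_{-J}(x))$, but since these are again disks in $\mathbb{C}_J$ centered on $(a,b)$, the connectedness argument is unaffected (or simply impose $r_{-J}=r_J$ from the start).
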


\begin{proof}
    If $q\in\mathbb{R}$, let $A$ be the connected component of $U_{\mathbb{R}}$ containing $q$ in $\mathbb{R}$; otherwise, let $A$ be the empty set. We notice that
    \begin{equation*}
    ((U\backslash U_{\mathbb{R}})\cup A)\cap\mathbb{R}=((U\backslash U_{\mathbb{R}})\cap\mathbb{R})\cup(A\cap\mathbb{R})=\varnothing\cup A=A
    \end{equation*}
    is connected in $\mathbb{R}$. Consequently, the slice-connected component of $(U\backslash U_{\mathbb{R}})\cup A$ containing $q$ is a real-connected slice-domain.
\end{proof}

\begin{defn} 
	Let $U$ be a subset of $\mathbb{H}$. A slice-path $\gamma$ in $\mathbb{H}$ is called a slice preserving path in $U$, if there exists $I\in\mathbb{S}$ such that $\gamma([0,1])\subset U_I$.
\end{defn}

For each $N\in\mathbb{N}^+$, topological space $X$ and paths $\gamma_\imath$, $\imath=1,2,...,N$ in $X$, we denote the composition of paths $\{\gamma_\imath\}_{\imath=1}^N$ by
\begin{equation*}
\prod_{l=1}^N \gamma_l:=\gamma_1\gamma_2....\gamma_N,
\end{equation*}
i.e.,
\begin{equation*}
(\prod_{l=1}^N\gamma_l)(t):=\left\{
\begin{split}
&\gamma_{\lfloor tN\rfloor+1}(\{tN\}),\qquad &&t\in[0,1),
\\&\gamma(1), &&t=1.
\end{split}
\right.
\end{equation*}

\begin{prop} \label{pr-rc2} 
	Let $U$ be a slice-domain in $\mathbb{H}$. The following assertions hold.
	
	(a). If $U_\mathbb{R}=\varnothing$, then there exists $I\in\mathbb{S}$ such that $U\subset\mathbb{C}_I$.
	
	(b). If $U$ is real-connected with $U_\mathbb{R}\neq\varnothing$, then for each $q\in U$ and $x\in\mathbb{R}$, there exists a slice preserving path $\gamma$ from $q$ to $x$.
	
	(c). If $U$ is real-connected, then for each $I\in\mathbb{S}$, $U_I$ is a domain in $\mathbb{C}_I$.
	
	(d). If $U$ is real-connected, then for each $p,q\in U$, there exist two slice preserving paths $\gamma_1,\gamma_2$ in $U$ such that
	\begin{equation*}
	\gamma_1(0)=p,\qquad\gamma_1(1)=\gamma_2(0)\qquad\mbox{and}\qquad\gamma_2(1)=q.
	\end{equation*}
\end{prop}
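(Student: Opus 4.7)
My plan is to establish the four assertions in the order (a), (c), (b), (d), since each part rests on the previous ones. The common thread throughout is Proposition \ref{pr-sdo}: to exhibit or rule out a partition of a slice-open set it suffices to control its trace on each slice $\mathbb{C}_J$.

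For (a), the key observation is that $\mathbb{C}_I\cap\mathbb{C}_J=\mathbb{R}$ whenever $I\ne\pm J$, so the assumption $U_\mathbb{R}=\varnothing$ forces the family $\{U_I\}$ (indexed by $[I]\in\mathbb{S}/\{\pm1\}$) to be pairwise disjoint. I would then check via Proposition \ref{pr-sdo} that each $U_I$ is slice-open in $\mathbb{H}$: its intersection with $\mathbb{C}_I$ is open in $\mathbb{C}_I$ by hypothesis on $U$, while its intersection with $\mathbb{C}_J$ for $J\ne\pm I$ is contained in $U_\mathbb{R}=\varnothing$. Thus $U$ is a disjoint union of slice-open pieces, and slice-connectedness forces exactly one of them to be nonempty.

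For (c), the case $U_\mathbb{R}=\varnothing$ follows from (a); so suppose $U_\mathbb{R}\ne\varnothing$ and that $U_I=V_1\sqcup V_2$ with $V_1,V_2$ disjoint open sets in $\mathbb{C}_I$. The traces $V_1\cap\mathbb{R}$ and $V_2\cap\mathbb{R}$ partition the connected set $U_\mathbb{R}$ into two disjoint relatively open sets, so after relabelling $V_2\cap\mathbb{R}=\varnothing$. Repeating the slice-open check from (a), I see that $V_2\subset\mathbb{H}$ is itself slice-open, and consequently so is $U\setminus V_2$ (its trace on $\mathbb{C}_I$ is $V_1$, and on every other $\mathbb{C}_J$ is all of $U_J$). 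Since $U_\mathbb{R}\subset V_1$ is contained in $U\setminus V_2$, both pieces are nonempty, contradicting slice-connectedness of $U$. Thus $V_2=\varnothing$, so $U_I$ is connected and hence a domain in $\mathbb{C}_I$.

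With (c) in hand, (b) and (d) are routine. For (b), pick any $I\in\mathbb{S}$ with $q\in\mathbb{C}_I$ (arbitrary $I$ if $q\in\mathbb{R}$); then both $q$ and $x$ lie in the domain $U_I$, and a Euclidean path in $U_I$ joining them is slice preserving. For (d), when $U_\mathbb{R}\ne\varnothing$ I fix any $x\in U_\mathbb{R}$ and apply (b) to produce slice preserving paths from $p$ and $q$ to $x$, then reverse the second; when $U_\mathbb{R}=\varnothing$, part (a) places $U$ inside a single $\mathbb{C}_K$ on which the slice topology coincides with the Euclidean one, so $U$ is a Euclidean domain in $\mathbb{C}_K$ and a single path from $p$ to $q$ split at any midpoint does the job. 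The main technical obstacle is step (c): the careful verification, via Proposition \ref{pr-sdo}, that both $V_2$ and $U\setminus V_2$ are slice-open subsets of $\mathbb{H}$, which is precisely where the property $V_2\cap\mathbb{R}=\varnothing$ delivered by real-connectedness is indispensable.
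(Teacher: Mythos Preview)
Your proposal is correct. The core technical step---verifying via Proposition \ref{pr-sdo} that a piece of $U_I$ avoiding $\mathbb{R}$ is slice-open in $\mathbb{H}$, and hence forces a disconnection of $U$---is exactly the one the paper uses. The only difference is the order: the paper proves (b) first by applying this slice-open check to the connected component $V$ of $U_I$ containing $q$ (showing $V_\mathbb{R}\neq\varnothing$, so one can reach a real point inside $V$ and then travel along the connected $U_\mathbb{R}$ to $x$), and then derives (c) from (b) by joining any two points of $U_I$ through a common real point. You instead prove (c) directly by the disconnection argument and then read (b) off immediately from the connectedness of $U_I$. Your route makes (b) a one-liner, at the cost of making the slice-open check in (c) slightly more general (an arbitrary clopen piece $V_2$ rather than a connected component); the paper's route has the advantage that the path in (b) visibly factors through $U_\mathbb{R}$, which is conceptually suggestive for the later finite-part-path machinery. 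Either ordering is fine.
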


\begin{proof}
	(a). We notice that for each $J\in\mathbb{S}$, $\mathbb{C}_J\backslash\mathbb{R}$ is slice-open in $\mathbb{H}$. If $U_\mathbb{R}=\varnothing$, then
	\begin{equation*}
	U\subset\bigsqcup_{J\in\mathbb{S}'}(\mathbb{C}_J\backslash\mathbb{R}),
	\end{equation*}
	where $\mathbb{S}'$ is a subset of $\mathbb{S}$ such that for each $J\in\mathbb{S}$, the cardinality of $\mathbb{S}'\cap\{\pm J\}$ is one. By the connectivity of $U$, there exists $I\in\mathbb{S}$ such that $U\subset\mathbb{C}_I\backslash\mathbb{R}$.
	
	(b). For each $q\in U$ and $x\in U_\mathbb{R}$, there exists $I\in\mathbb{S}$ such that $q\in\mathbb{C}_I$. We denote the connected component of $U_I$ containing $q$ by $V$. If $V_\mathbb{R}=\varnothing$, then $V$ and $U\backslash V$ are slice-open. And since $U$ is slice-connected, it follows that
	\begin{equation*}
	V=U\qquad\mbox{and}\qquad U_\mathbb{R}=\varnothing,
	\end{equation*}
	which is a contradiction. Therefore, $V_\mathbb{R}\neq\varnothing$. Let $x_0\in V_\mathbb{R}$. Then there exist a path $\alpha$ in $V$ from $q$ to $x_0$, and a path $\beta$ in $U_\mathbb{R}$ from $x_0$ to $x$. It follows that $\alpha\beta$ is a slice preserving path from $q$ to $x$.
	
	(c). If $U_\mathbb{R}=\varnothing$, according to (a), there exists $I\in\mathbb{S}$ such that $U\subset\mathbb{C}_I$. We suppose $U\neq\varnothing$. Let $V$ be a connected component of $U$ in $\mathbb{C}_I$. We notice that $V$ and $U\backslash V$ are slice-open sets in $\mathbb{H}$. And since $U$ is slice-connected, it follows that
	\begin{equation*}
	V=U\qquad\mbox{and}\qquad U_\mathbb{R}=\varnothing.
	\end{equation*}
	Thence (c) holds.
	
	Otherwise, for each $I\in\mathbb{S}$, $p,q\in U_I$ and $x\in U_\mathbb{R}$, thanks to (b), there exist a slice preserving path $\alpha$ in $U_I$ from $p$ to $x$, and a slice preserving path $\beta$ in $U_I$ from $x$ to $q$. Hence $\alpha\beta$ is a slice preserving path in $U_I$ from $p$ to $q$. It is clear that $U_I$ is path-connected in $\mathbb{C}_I$. And since Proposition \ref{pr-sdo}, $U_I$ is an open set in $\mathbb{C}_I$. It follows that $U_I$ is a domain in $\mathbb{C}_I$.
	
	(d). If $U_\mathbb{R}=\varnothing$, then there exists $I\in\mathbb{S}$ such that $U$ is a domain in $\mathbb{C}_I$, by (a) and (c). Thus (d) holds.
	
	Otherwise, for each $p,q\in U$, let $x\in U_\mathbb{R}$, $\gamma_1$ be a slice preserving path in $U$ from $p$ to $x$, and $\gamma_2$ be a slice preserving path in $U$ from $x$ to $q$. Therefore
	\begin{equation*}
	\gamma_1(0)=p,\qquad\gamma_1(1)=x=\gamma_2(0)\qquad\mbox{and}\qquad\gamma_2(1)=q.
	\end{equation*}
	Hence (d) holds.
\end{proof}

\begin{prop} \label{pr-lp} 
	The topological space $(\mathbb{H},\tau_s)$ is connected and local path-connected. In particular, $(\mathbb{H},\tau_s)$ is path-connected.
\end{prop}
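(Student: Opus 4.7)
The plan is to first establish local path-connectedness from the structural results of this section, and then to derive path-connectedness (hence connectedness) by constructing explicit slice-paths through the real axis. The one technical ingredient needed throughout is the observation that, for any $I\in\mathbb{S}$, a map $\alpha:[0,1]\to\mathbb{C}_I$ which is continuous with respect to the Euclidean topology on $\mathbb{C}_I$ is automatically continuous as a map into $(\mathbb{H},\tau_s)$; equivalently, every slice-preserving path is a slice-path. This follows at once from Proposition \ref{pr-sdo}: for any slice-open $U\subset\mathbb{H}$, the set $U_I=U\cap\mathbb{C}_I$ is Euclidean-open in $\mathbb{C}_I$, so $\alpha^{-1}(U)=\alpha^{-1}(U_I)$ is open in $[0,1]$.

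For local path-connectedness, let $q\in\mathbb{H}$ and let $U$ be a slice-open neighborhood of $q$. Proposition \ref{pr-rc1} furnishes a real-connected slice-domain $V$ with $q\in V\subset U$. By Proposition \ref{pr-rc2}(d), any two points of $V$ can be joined in $V$ by a concatenation of two slice-preserving paths, and this concatenation is a slice-path by the preceding observation. Thus every point admits arbitrarily small path-connected slice-open neighborhoods, which establishes local path-connectedness.

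For (path-)connectedness, pick any $p,q\in\mathbb{H}$ and choose $I,J\in\mathbb{S}$ with $p\in\mathbb{C}_I$ and $q\in\mathbb{C}_J$. The Euclidean segments from $p$ to $0$ in $\mathbb{C}_I$ and from $0$ to $q$ in $\mathbb{C}_J$ are slice-preserving paths, hence slice-paths, and their concatenation yields a slice-path from $p$ to $q$. This proves that $(\mathbb{H},\tau_s)$ is path-connected, and in particular connected. The only subtle point in the whole argument is the initial observation relating Euclidean continuity into $\mathbb{C}_I$ with slice-continuity into $\mathbb{H}$; once this is in place, everything reduces to direct applications of Propositions \ref{pr-rc1} and \ref{pr-rc2}.
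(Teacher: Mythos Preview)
Your proof is correct and, for local path-connectedness, follows exactly the paper's route via Propositions \ref{pr-rc1} and \ref{pr-rc2}(d). The only difference is in how you obtain global (path-)connectedness: the paper first argues that $(\mathbb{H},\tau_s)$ is connected (since $\mathbb{H}=\bigcup_{I\in\mathbb{S}}\mathbb{C}_I$ with each $\mathbb{C}_I$ connected in the subspace topology and all sharing $\mathbb{R}$) and then invokes the general fact that a connected, locally path-connected space is path-connected, whereas you construct an explicit slice-path through $0$ and deduce connectedness as a corollary. Both arguments are equally short; yours has the mild advantage of being self-contained and of making explicit the point, used only implicitly in the paper, that a Euclidean-continuous path into a single slice $\mathbb{C}_I$ is automatically $\tau_s$-continuous.
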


\begin{proof}
	According to Propositions \ref{pr-rc1} and \ref{pr-rc2} (d), $(\mathbb{H},\tau_s)$ is local path-connected. We notice that $\mathbb{H}\cap\mathbb{C}_I=\mathbb{C}_I\supset\mathbb{R}$, then $(\mathbb{H},\tau_s)$ is connected. It follows that $(\mathbb{H},\tau_s)$ is path-connected.
\end{proof}

\begin{defn}\label{df-1npp}
	Let $N\in\mathbb{N}^+$ and $\gamma_\imath$ be a path in $\mathbb{C}$ for each $\imath\in\{1,2,...,N\}$.
	\begin{equation*}
		\gamma:=(\gamma_1,\gamma_2,...,\gamma_N)
	\end{equation*}
	is called an $N$-part path in $\mathbb{C}$, if
	\begin{equation*}
	\gamma_\imath(1)=\gamma_{\imath+1}(0)\in\mathbb{R},\qquad \imath=1,2,...,N-1.
	\end{equation*}
	We set
	\begin{equation*}
		\gamma(t):=\left\{
		\begin{split}
			&\gamma_{\lfloor tN\rfloor+1}(\{tN\}),\qquad &&t\in[0,1),
			\\&\gamma_N(1), &&t=1.
		\end{split}
		\right.
	\end{equation*}
	We say that $\gamma$ is from $\gamma(0)$ to $\gamma(1)$.
	
	The set of all $N$-part paths in $\mathbb{C}$ is denoted by $\mathcal{P}^N(\mathbb{C})$. We set
	\begin{equation*}
		\mathcal{P}^{\infty}(\mathbb{C}):=\bigsqcup_{\imath\in\mathbb{N}^+} \mathcal{P}^\imath(\mathbb{C}).
	\end{equation*}
	We say that the elements of $\mathcal{P}^{\infty}(\mathbb{C})$ are finite-part paths in $\mathbb{C}$.
	
	We call $\gamma(0)$ the initial point of $\gamma$. For each $z\in\mathbb{C}$ and $N\in\mathbb{N}^+$, we denote by $\mathcal{P}^{\infty}_z(\mathbb{C})$ (resp. $\mathcal{P}^{N}_z(\mathbb{C})$) the set of all the finite-part (resp. $N$-part) paths in $\mathbb{C}$ with the initial point $z$.
\end{defn}

\begin{defn} 
	Let $N\in\mathbb{N}^+$ and $\gamma_\imath$ be a slice preserving path in $\mathbb{H}$ for each $\imath\in\{1,2,...,N\}$.
	\begin{equation*}
		\gamma:=(\gamma_1,\gamma_2,...,\gamma_N)
	\end{equation*}
	is called an $N$-part path in $\mathbb{H}$, if
	\begin{equation*}
	\gamma_\imath(1)=\gamma_{\imath+1}(0)\in\mathbb{R},\qquad \imath=1,2,...,N-1.
	\end{equation*}
	We set
	\begin{equation*}
		\gamma(t):=\left\{
		\begin{split}
			&\gamma_{\lfloor tN\rfloor+1}(\{tN\}),\qquad &&t\in[0,1),
			\\&\gamma_N(1), &&t=1.
		\end{split}
		\right.
	\end{equation*}
	We say that $\gamma$ is from $\gamma(0)$ to $\gamma(1)$.
	
	The set of all $N$-part paths in $\mathbb{H}$ is denoted by $\mathcal{P}^N(\mathbb{H})$. We set
	\begin{equation*}
		\mathcal{P}^{\infty}(\mathbb{H}):=\bigsqcup_{\imath\in\mathbb{N}^+} \mathcal{P}^\imath(\mathbb{H}).
	\end{equation*}
	We say that the elements of $\mathcal{P}^{\infty}(\mathbb{H})$ are finite-part paths in $\mathbb{H}$.
\end{defn}

For each $N\in\mathbb{N}^+$ and $I=(I_1,I_2,...I_N)\in\mathbb{S}^N$, we define a map
\begin{equation*}
	\phi_I:\mathcal{P}^N (\mathbb{C})\rightarrow\mathcal{P}^N (\mathbb{H})
\end{equation*}
by
\begin{equation*}
\phi_I(\gamma):=(P_{I_1} (\gamma_1),P_{I_2} (\gamma_2),...,P_{I_N} (\gamma_N))
\end{equation*}
for each $\gamma=(\gamma_1,\gamma_2,...,\gamma_N)\in\mathcal{P}^N(\mathbb{C})$.

Obviously, for each $N\in\mathbb{N}^+$ and $\alpha\in\mathcal{P}^N(\mathbb{H})$, there exist $I\in\mathbb{S}^N$ and $\beta\in\mathcal{P}^(\mathbb{C})$ such that $\phi_I(\beta)=\alpha$.

\begin{defn}\label{df:1lni}
	Let $N\in\mathbb{N}^+$, $I\in\mathbb{S}^N$, and $\gamma\in\mathcal{P}^N(\mathbb{C})$. We call  $\phi_I(\gamma)$  the $I$-lifting of $\gamma$ to $\mathbb{H}$, denoted by $\gamma^I$.
\end{defn}

\section{Splitting lemma and identity principle on slice-domains}
The classical splitting lemma and the identity principle are stated in \cite{Gentili2006001,Gentili2007001}. Now we will prove the corresponding results in the case of slice-domains exactly as \cite[Lemma 2.5]{Gentili2007001} and \cite[Theorem 3.1]{Gentili2007001}.

\begin{defn}\label{df-sr} 
	Let $\Omega$ be a slice-open set in $\mathbb{H}$. A function $f:\Omega\rightarrow\mathbb{H}$ is called (left) slice regular, if for each $I\in\mathbb{S}$, $f_I:=f|_{\Omega_I}$ is left holomorphic.
\end{defn}

\begin{lem}\label{lm-sp} 
	(Splitting Lemma)
	Let $f$ be a function defined on a slice-open subset $\Omega$ of $\mathbb{H}$. Then $f$ is slice regular, if and only if for all $I,J\in\mathbb{S}$ with $I\bot J$, there exist two complex-valued holomorphic functions $F,G: \Omega_I\rightarrow\mathbb{C}_I$ such that
	\begin{equation*}
	f_I(z)=F(z)+G(z)J,\qquad \forall\ z\in\Omega_I.
	\end{equation*}
\end{lem}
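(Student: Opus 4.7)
The plan is to mimic the classical argument of \cite[Lemma 2.5]{Gentili2007001}, with the only substantive new ingredient being the observation that, by Proposition \ref{pr-sdo}, each slice $\Omega_I$ is open in $\mathbb{C}_I$, so that the condition ``$f_I$ is left holomorphic'' is well posed in the ordinary sense. Once this has been noted, the argument is a purely algebraic manipulation on each slice plane, and the slice-topology plays no further role.

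For the forward direction I would fix $I,J\in\mathbb{S}$ with $I\bot J$ and decompose $\mathbb{H}$ as a free right $\mathbb{C}_I$-module of rank two. Since $\{1,I,J,IJ\}$ is an $\mathbb{R}$-basis of $\mathbb{H}$, the set $\{1,J\}$ is a right $\mathbb{C}_I$-basis, giving the direct sum decomposition $\mathbb{H}=\mathbb{C}_I\oplus\mathbb{C}_I\,J$. Hence $f_I$ admits a unique expression $f_I(z)=F(z)+G(z)J$ with $F,G:\Omega_I\to\mathbb{C}_I$, and these component functions inherit continuous real partial derivatives from $f_I$. Because $\bar\partial_I$ is $\mathbb{R}$-linear and commutes with right multiplication by the constant $J$, we obtain
\begin{equation*}
\bar\partial_I f_I(z)=\bar\partial_I F(z)+\bigl(\bar\partial_I G(z)\bigr)J.
\end{equation*}
The first summand lies in $\mathbb{C}_I$ and the second in $\mathbb{C}_I\,J$, and these subspaces intersect trivially; therefore $\bar\partial_I f_I=0$ forces $\bar\partial_I F=0$ and $\bar\partial_I G=0$ separately, so $F$ and $G$ are holomorphic.

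For the converse, I would simply run the same computation in reverse. Fixing any $I\in\mathbb{S}$, choose any $J\in\mathbb{S}$ with $I\bot J$ (for example, pick $K\in\mathbb{S}$ orthogonal to $I$, which always exists), and invoke the hypothesized decomposition $f_I=F+GJ$ with $F,G$ holomorphic. Then $f_I$ inherits continuous partial derivatives from $F$ and $G$, and
\begin{equation*}
\bar\partial_I f_I(z)=\bar\partial_I F(z)+\bigl(\bar\partial_I G(z)\bigr)J=0+0\cdot J=0
\end{equation*}
on $\Omega_I$. Hence $f_I$ is left holomorphic for every $I\in\mathbb{S}$, so by Definition \ref{df-sr}, $f$ is slice regular on $\Omega$.

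I do not foresee any genuine obstacle: the main conceptual check is that replacing ``domain in $\mathbb{H}$'' by ``slice-open subset of $\mathbb{H}$'' does not disturb the pointwise, slice-by-slice nature of the computation, which is exactly the content of Proposition \ref{pr-sdo}. The algebraic splitting $\mathbb{H}=\mathbb{C}_I\oplus\mathbb{C}_I J$ and the orthogonality of $\bar\partial_I F$ and $(\bar\partial_I G)J$ in this decomposition then finish the proof in both directions.
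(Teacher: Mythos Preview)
Your proposal is correct and follows essentially the same approach as the paper: decompose $f_I$ along $\mathbb{H}=\mathbb{C}_I\oplus\mathbb{C}_I J$, apply $\bar\partial_I$, and use the direct-sum property to separate the components. The only addition is your explicit invocation of Proposition \ref{pr-sdo} to justify that $\Omega_I$ is open in $\mathbb{C}_I$, which the paper leaves implicit.
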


\begin{proof}
``$\Rightarrow$" For each $I,J\in\mathbb{S}$ satisfying $I\bot J$, there exist two complex-value functions $F,G: \Omega_I\rightarrow\mathbb{C}_I$ such that $f_I=F+GJ$. If $f$ is slice regular, then
\begin{equation*}
\bar\partial_I F+\bar\partial_I GJ=\bar\partial_I f_I=0.
\end{equation*}
For each $q\in\Omega_I$, we have
\begin{equation*}
\bar\partial_I F(q)\in\mathbb{C}_I\qquad\mbox{and}\qquad\bar\partial_I G(q)J\in\mathbb{C}_I J,
\end{equation*}
thus
\begin{equation*}
\bar\partial_I F(q)=\bar\partial_I G(q)=0.
\end{equation*}
It follows that $F$, $G$ are holomorphic.
	
``$\Leftarrow$" If for each $I,J\in\mathbb{S}$ with $I\bot J$, there exist two holomorphic functions $F_I,G_I:\Omega_I\rightarrow\mathbb{C}_I$ such that
\begin{equation*}
f_I(z)=F_I(z)+G_I(z)J,\qquad \forall\ z\in\Omega_I.
\end{equation*}
Then
\begin{equation*}
\bar\partial_I f=\bar\partial_I F_I+\bar\partial_I G_IJ=0,\qquad\forall\ I\in\mathbb{S}.
\end{equation*}
It follows that $f$ is slice regular.
\end{proof}

\begin{thm} \label{th-dps} 
	(Identity Principle)
	Let $\Omega$ be a real-connected slice-domain in $\mathbb{H}$, and $f,g$ be two slice regular functions on $\Omega$. If there exists $I\in\mathbb{S}$ such that $f$ and $g$ coincide on a subset of $\Omega_I$ having an accumulation point in $\Omega_I$, then $f=g$ on $\Omega$.
\end{thm}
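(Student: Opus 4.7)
The plan is to set $h := f - g$, which is again slice regular on $\Omega$, and reduce the statement to proving $h \equiv 0$ on $\Omega$. The overall strategy is the classical one: first kill $h$ on the distinguished slice $\Omega_I$ using the one-variable identity principle, then propagate the vanishing across slices via the real axis, which is common to all $\mathbb{C}_J$. The earlier structural results (Proposition \ref{pr-rc2} and the Splitting Lemma) are designed to make exactly this propagation work.

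First I would handle the distinguished slice. Since $\Omega$ is real-connected, Proposition \ref{pr-rc2}(c) tells me $\Omega_I$ is a domain in $\mathbb{C}_I$. By the Splitting Lemma \ref{lm-sp}, writing $h_I = F + G J$ for $I \perp J$ expresses $h_I$ in terms of two $\mathbb{C}_I$-valued holomorphic functions on $\Omega_I$. The hypothesis supplies a zero set of $h_I$ with an accumulation point in $\Omega_I$, so the classical one-variable identity principle applied to $F$ and $G$ yields $h_I \equiv 0$ on $\Omega_I$.

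Next I would extend this vanishing to all slices, splitting on whether $\Omega_\mathbb{R}$ is empty. If $\Omega_\mathbb{R} = \varnothing$, Proposition \ref{pr-rc2}(a) forces $\Omega \subset \mathbb{C}_J$ for some $J \in \mathbb{S}$. If $J \neq \pm I$, then $\Omega_I \subset \mathbb{C}_I \cap \mathbb{C}_J = \mathbb{R} \subset \Omega_\mathbb{R} = \varnothing$, contradicting the existence of an accumulation point in $\Omega_I$; hence $J = \pm I$, so $\Omega = \Omega_I$ and we are already done. Otherwise $\Omega_\mathbb{R} \neq \varnothing$, and since $\Omega_\mathbb{R}$ is open in $\mathbb{R}$ (by Proposition \ref{pr-sdo} together with the remark that $\tau_s$ restricts to the Euclidean topology on $\mathbb{R}$), every point of $\Omega_\mathbb{R}$ is an accumulation point of $\Omega_\mathbb{R}$ in $\mathbb{R}$. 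Now for an arbitrary $J \in \mathbb{S}$, Proposition \ref{pr-rc2}(c) shows $\Omega_J$ is a domain in $\mathbb{C}_J$, the function $h_J$ is holomorphic there, and $h_J$ vanishes on $\Omega_\mathbb{R} \subset \Omega_I \cap \Omega_J$ because $h_I \equiv 0$. The classical identity principle in $\mathbb{C}_J$ then gives $h_J \equiv 0$ on $\Omega_J$, and taking the union over $J \in \mathbb{S}$ yields $h \equiv 0$ on $\Omega$.

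I do not expect a genuine obstacle here: every nontrivial point has been pre-packaged in the previous section. The only subtlety worth double-checking is the degenerate case $\Omega_\mathbb{R} = \varnothing$, where one must notice that the hypothesis (accumulation point in $\Omega_I$) actually forces $\Omega$ to lie inside $\mathbb{C}_I$ itself, so nothing needs to be propagated. Everything else is an application of Proposition \ref{pr-rc2}, Lemma \ref{lm-sp}, and the classical single-variable identity principle.
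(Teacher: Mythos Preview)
Your proposal is correct and follows essentially the same route as the paper's proof: use Proposition~\ref{pr-rc2}(c) to know $\Omega_I$ is a domain, apply the one-variable identity principle there, then propagate through $\Omega_\mathbb{R}$ to every $\Omega_J$ via Proposition~\ref{pr-rc2}(a),(c). If anything, you are more explicit than the paper---you spell out the Splitting Lemma step and the argument that $\Omega_\mathbb{R}=\varnothing$ forces $\Omega=\Omega_I$, both of which the paper leaves implicit.
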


\begin{proof}
	 Note that $\Omega_I\neq\varnothing$, and by Proposition \ref{pr-rc2} (c), we get that $\Omega_I$ is a nonempty domain in $\mathbb{C}_I$. Therefore $f$ and $g$ coincide on $\Omega_I$. If $\Omega_{\mathbb{R}}=\varnothing$, then $\Omega=\Omega_I$ by Proposition \ref{pr-rc2} (a). If $\Omega_{\mathbb{R}}\neq\varnothing$, we have $f=g$ on $\Omega_\mathbb{R}$, and hence on $\Omega_J$ for each $J\in\mathbb{S}$.
	 Consequently,
	 \begin{equation*}
	 f=g\qquad\mbox{on}\qquad\Omega=\bigcup_{J\in\mathbb{S}}\Omega_J.
	 \end{equation*}
\end{proof}

\section{Riemann slice-domains over $\mathbb{H}$}\label{sc-rsh}

In this section, we introduce a concept of Riemann slice-domains over $\mathbb{H}$ following \cite{Fritzsche2002001B}.

\begin{defn}\label{df-rsd} 
	A (Riemann) slice-domain over $\mathbb{H}$ is a pair $(G,\pi)$ with the following properties:
	
	1. $(G,\tau(G))$ is a connected Hausdorff   space,
	
	2. $\pi:G\rightarrow\mathbb{H}$ is a local slice-homeomorphism, i.e., locally homeomorphic with respect to $\tau(G)$ and $\tau_s(\mathbb{H})$.
\end{defn}

\begin{rmk}\label{rk-pc}
	Let $(G,\pi)$ be a Riemann slice domain over $\mathbb{H}$. Then $G$ is path-connected and local path-connected.
\end{rmk}

\begin{proof}
	According to $\mathbb{H}$ is local slice path-connected, $G$ is local path-connected. And since $G$ is connected, $G$ is path-connected.
\end{proof}

\begin{rmk}\label{rk-p}
	Let $(G,\pi)$ be a slice-domain over $\mathbb{H}$ and $U$ be an open set in $G$. Then $\pi(U)$ is a slice-open set in $\mathbb{H}$.

Moreover, if $U$ is a domain in $G$, then $\pi(U)$ is a slice-domain in $\mathbb{H}$.
\end{rmk}

\begin{proof}
	Since $\pi$ is locally slice-homeomorphic, $\pi(U)$ is slice-open in $\mathbb{H}$.
	
	If $U$ is a domain in $G$, and since $G$ is local path-connected, $U$ is local path-connected. It follows that $U$ is path-connected. For each $p,q\in U$, there exists a path $\alpha$ in $U$ from $p$ to $q$. Then $\pi(\alpha)$ is a slice-path from $\pi(p)$ to $\pi(q)$. It is clear that $\pi(U)$ is slice path-connected. Then $\pi(U)$ is a slice-domain in $\mathbb{H}$.
\end{proof}

\begin{defn} 
	A (Riemann) slice-domain over $\mathbb{H}$ with distinguished point is a triple $\mathcal{G}=(G,\pi,x)$ for which $(G,\pi)$ is a slice-domain over $\mathbb{H}$ and $x\in G$. We denote all the slice-domains over $\mathbb{H}$ with distinguished point by $\mathscr{R}$.
\end{defn}

\begin{prop}\label{pr-ul}
	(On the uniqueness of lifting) Let $(G,\pi)$ be a slice-domain over $\mathbb{H}$ and $X$ be a connected topological space. If $x\in X$ is a point and $\psi_1,\psi_2:X\rightarrow G$ are continuous mappings with $\psi_1(x)=\psi_2(x)$ and $\pi\circ\psi_1=\pi\circ\psi_2$, then $\psi_1=\psi_2$.
\end{prop}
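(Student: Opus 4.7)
The plan is to set
\begin{equation*}
A:=\{y\in X:\psi_1(y)=\psi_2(y)\}
\end{equation*}
and show that $A$ is both open and closed in $X$. Since $x\in A$ by hypothesis, $A$ is nonempty; together with the connectedness of $X$ this will give $A=X$, i.e., $\psi_1=\psi_2$.

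To show that $A$ is open, I would fix $y_0\in A$ and let $g:=\psi_1(y_0)=\psi_2(y_0)\in G$. Because $\pi$ is a local slice-homeomorphism, there is an open neighborhood $V$ of $g$ in $G$ such that $\pi|_V:V\to\pi(V)$ is a homeomorphism onto a slice-open set $\pi(V)\subset\mathbb{H}$; in particular $\pi|_V$ is injective. By continuity of $\psi_1,\psi_2$ the set $W:=\psi_1^{-1}(V)\cap\psi_2^{-1}(V)$ is an open neighborhood of $y_0$ in $X$. For any $y\in W$ one has $\psi_1(y),\psi_2(y)\in V$ with $\pi(\psi_1(y))=\pi(\psi_2(y))$, so injectivity of $\pi|_V$ yields $\psi_1(y)=\psi_2(y)$, i.e.\ $W\subset A$.

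To show that $A$ is closed, I would use that $G$ is Hausdorff (Definition \ref{df-rsd}). If $y_0\in X\setminus A$, then $\psi_1(y_0)\neq\psi_2(y_0)$, so there are disjoint open neighborhoods $V_1,V_2\subset G$ of $\psi_1(y_0)$ and $\psi_2(y_0)$ respectively. Then $W:=\psi_1^{-1}(V_1)\cap\psi_2^{-1}(V_2)$ is an open neighborhood of $y_0$, and for every $y\in W$ we have $\psi_1(y)\in V_1$, $\psi_2(y)\in V_2$, so $\psi_1(y)\neq\psi_2(y)$ and $y\notin A$. Hence $X\setminus A$ is open, as required.

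There is no substantive obstacle here; the argument is the standard one for uniqueness of lifts in covering/étale space theory. The only points to be careful about are (i) that the local injectivity used in the openness step is precisely the local slice-homeomorphism property of $\pi$ from Definition \ref{df-rsd}, and (ii) that the closedness step relies on the Hausdorff hypothesis on $G$, which is also part of that definition. Once these are invoked correctly, connectedness of $X$ finishes the argument.
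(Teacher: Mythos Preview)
Your proposal is correct and follows essentially the same approach as the paper's proof: define $A=\{y\in X:\psi_1(y)=\psi_2(y)\}$, use the local slice-homeomorphism property of $\pi$ to show $A$ is open, use the Hausdorff property of $G$ to show $A$ is closed, and conclude by connectedness of $X$. The only cosmetic difference is that the paper writes the openness argument via $\psi_1|_V=(\pi|_U)^{-1}\circ\pi\circ\psi_1|_V=(\pi|_U)^{-1}\circ\pi\circ\psi_2|_V=\psi_2|_V$ rather than invoking injectivity of $\pi|_U$ directly, and it states the closedness step more tersely; your version spells out both steps in full.
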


\begin{proof}
	We prove this proposition with the same approach as in \cite[Proposition 8.1]{Fritzsche2002001B}. We set
	\begin{equation*}
	A:=\{y\in X:\psi_1(y)=\psi_2(y)\}.
	\end{equation*}
	By assumption, $x\in A$. It is clear that $A\neq\varnothing$. Since $G$ is a Hausdorff space, it follows immediately that $A$ is closed. Now let $y\in A$ be chosen arbitrarily, and set
	\begin{equation*}
	q:=\psi_1(y)=\psi_2(y).
	\end{equation*}
	There exists a domain $U\subset G$ containing $q$ such that $\pi|_U:U\rightarrow\pi(U)$ is a slice-homeomorphism. Let $V$ be the intersection of the preimage of $U$ under $\psi_1$ and $\psi_2$, i.e.,
	\begin{equation*}
	V:=\psi_1^{-1}(U)\cap\psi_2^{-1}(U).
	\end{equation*}
	According to
	\begin{equation*}
	\pi\circ\psi_1=\pi\circ\psi_2,
	\end{equation*}
	we have
	$$\psi_1|_V=(\pi|_U)^{-1}\circ\pi\circ\psi_1|_V=(\pi|_U)^{-1}\circ\pi\circ\psi_2|_V=\psi_2|_V,$$
	and therefore $V\subset A$. Since $\psi_1,\psi_2$ are continuous, $V$ is open in $X$. Hence $A$ is open in $X$, and since $X$ is connected and closed, it follows that $A=X$.
\end{proof}

\begin{defn}
	Let $\mathcal{G}_\lambda=(G_\lambda,\pi_\lambda,x_\lambda)$, $\lambda=1,2$, be two slice-domains over $\mathbb{H}$ with distinguished point. We say that $\mathcal{G}_1$ is contained in $\mathcal{G}_2$ (denoted by $\mathcal{G}_1\prec\mathcal{G}_2$), if there exists a continuous map $\varphi:G_1\rightarrow G_2$ with the following properties:
	
	1. $\pi_2\circ\varphi=\pi_1$ (called ``$\varphi$ preserves fibers").
	
	2. $\varphi(x_1)=x_2$.
\end{defn}

\begin{prop}\label{pr-ufpm}
	Let $\mathcal{G}_\lambda=(G_\lambda,\pi_\lambda,x_\lambda)$, $\lambda=1,2$ be two slice-domains over $\mathbb{H}$ with distinguished point. If $\mathcal{G}_1\prec\mathcal{G}_2$, then the fiber preserving map $\varphi:G_1\rightarrow G_2$ with $\varphi(x_1)=x_2$ is uniquely determined. We call $\varphi$ the fiber preserving map from $\mathcal{G}_1$ to $\mathcal{G}_2$.
\end{prop}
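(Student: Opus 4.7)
The plan is to apply the uniqueness-of-lifting result, Proposition \ref{pr-ul}, essentially verbatim. Suppose we have two continuous maps $\varphi_1,\varphi_2: G_1 \to G_2$, both of which satisfy the defining conditions of $\mathcal{G}_1\prec\mathcal{G}_2$, namely $\pi_2\circ\varphi_\lambda = \pi_1$ and $\varphi_\lambda(x_1)=x_2$ for $\lambda=1,2$. I want to show $\varphi_1=\varphi_2$.

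I would set up the hypotheses of Proposition \ref{pr-ul} with $X=G_1$, distinguished point $x_1\in X$, target slice-domain $(G_2,\pi_2)$, and lifts $\psi_1:=\varphi_1$, $\psi_2:=\varphi_2$. Two of the three required hypotheses are immediate: continuity of $\varphi_1,\varphi_2$ is part of the definition of containment, and $\pi_2\circ\varphi_1 = \pi_1 = \pi_2\circ\varphi_2$ follows from the fiber-preserving property, while $\varphi_1(x_1)=x_2=\varphi_2(x_1)$ is the base-point condition. The only remaining hypothesis is connectedness of $X=G_1$, which is part of the definition of a Riemann slice-domain (Definition \ref{df-rsd}, item 1). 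Hence Proposition \ref{pr-ul} applies and yields $\varphi_1=\varphi_2$.

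There is really no obstacle to overcome here: the proposition is an immediate corollary of Proposition \ref{pr-ul}, since the notion of containment $\mathcal{G}_1\prec\mathcal{G}_2$ is designed precisely so that a fiber-preserving map sending $x_1$ to $x_2$ is a lift, in the sense of Proposition \ref{pr-ul}, of the fixed map $\pi_1:G_1\to\mathbb{H}$ through $\pi_2$. Thus the proof can be given in a few lines by simply quoting Proposition \ref{pr-ul} and checking its three hypotheses.
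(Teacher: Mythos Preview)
Your proposal is correct and follows exactly the same approach as the paper, which simply states that the result follows immediately from Proposition~\ref{pr-ul}. You have merely spelled out the verification of the hypotheses that the paper leaves implicit.
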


\begin{proof}
	This follows immediately from Proposition \ref{pr-ul}.
\end{proof}

\begin{prop}\label{pr-movebase}
	Let $\mathcal{G}_\lambda=(G_\lambda,\pi_\lambda,x_\lambda)$, $\lambda=1,2$ be slice-domains over $\mathbb{H}$ with distinguished point. If $\mathcal{G}_1\prec\mathcal{G}_2$ and $\varphi$ is the fiber preserving map from $\mathcal{G}_1$ to $\mathcal{G}_2$, then $(G_1,\pi_1,y)\prec(G_2,\pi_2,\varphi(y))$ for each $y\in G_1$.
\end{prop}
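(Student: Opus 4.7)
The plan is to simply reuse $\varphi$ itself as the witnessing map for the new containment. That is, I would verify that the same continuous map $\varphi:G_1\to G_2$ that witnesses $\mathcal{G}_1\prec\mathcal{G}_2$ also witnesses $(G_1,\pi_1,y)\prec(G_2,\pi_2,\varphi(y))$ for an arbitrary $y\in G_1$. Changing the distinguished point from $x_1$ to $y$ does not change the underlying triple in any way that interacts with the map $\varphi$, so no new construction is required.

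To carry this out, I would check the two defining clauses of $\prec$ against $\varphi$ with the new base points $(y,\varphi(y))$. The fiber preserving condition $\pi_2\circ\varphi=\pi_1$ is inherited unchanged from the hypothesis $\mathcal{G}_1\prec\mathcal{G}_2$, because it is a global identity of maps on $G_1$ and depends on neither $x_1$ nor $y$. The base point condition becomes $\varphi(y)=\varphi(y)$, which is tautological. Continuity of $\varphi$ is part of the hypothesis. Hence all three ingredients of the definition of $\prec$ are satisfied, giving $(G_1,\pi_1,y)\prec(G_2,\pi_2,\varphi(y))$.

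There is no real obstacle in this proposition: the only conceivable subtlety would be whether, after moving the base point, the fiber preserving map is still \emph{uniquely} determined (so that one has a well-defined ``the'' map of Proposition \ref{pr-ufpm}), but uniqueness is automatic from Proposition \ref{pr-ul} once $G_1$ is connected, and the existence part is what the statement actually asks for. So the proof reduces to a one-line verification invoking the definition of $\prec$.
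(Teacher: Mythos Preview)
Your proposal is correct and matches the paper's own proof: the paper simply observes that the same $\varphi$ serves as the fiber preserving map from $(G_1,\pi_1,y)$ to $(G_2,\pi_2,\varphi(y))$, which is exactly the verification you carry out.
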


\begin{proof}
	If $\mathcal{G}_1\prec\mathcal{G}_2$, then $\varphi$ is also the fiber preserving from $(G_1,\pi_1,y)$ to $(G_2,\pi_2,\varphi(y))$, i.e., $(G_1,\pi_1,y)\prec(G_2,\pi_2,\varphi(y))$ for each $y\in G_1$.
\end{proof}

\begin{prop}\label{pr-weak}
	For each slice-domains $\mathcal{G}_\lambda$ over $\mathbb{H}$ with distinguished point with $\lambda=1,2,3$, we have
	
	1. $\mathcal{G}_1\prec\mathcal{G}_1$.
	
	2. If $\mathcal{G}_1\prec\mathcal{G}_2$ and $\mathcal{G}_2\prec\mathcal{G}_3$, then $\mathcal{G}_1\prec\mathcal{G}_3$.
\end{prop}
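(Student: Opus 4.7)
The plan is to verify both assertions by exhibiting the required fiber-preserving maps directly from the data, since $\prec$ is defined by the existence of such a map.

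For assertion 1, I would take $\varphi := \id_{G_1} : G_1 \to G_1$. This is obviously continuous, sends $x_1$ to $x_1$, and trivially satisfies $\pi_1 \circ \id_{G_1} = \pi_1$. Hence $\mathcal{G}_1 \prec \mathcal{G}_1$.

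For assertion 2, let $\varphi_{12} : G_1 \to G_2$ be the fiber preserving map witnessing $\mathcal{G}_1 \prec \mathcal{G}_2$ (unique by Proposition \ref{pr-ufpm}), and let $\varphi_{23} : G_2 \to G_3$ be the one witnessing $\mathcal{G}_2 \prec \mathcal{G}_3$. I would set $\varphi_{13} := \varphi_{23} \circ \varphi_{12}$. Continuity is immediate from continuity of the factors. The fiber-preserving property follows from
\begin{equation*}
\pi_3 \circ \varphi_{13} = \pi_3 \circ \varphi_{23} \circ \varphi_{12} = \pi_2 \circ \varphi_{12} = \pi_1,
\end{equation*}
while the base point condition follows from $\varphi_{23}(\varphi_{12}(x_1)) = \varphi_{23}(x_2) = x_3$. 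Thus $\mathcal{G}_1 \prec \mathcal{G}_3$.

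There is no real obstacle here: the statement is essentially that $\prec$ defines a preorder on $\mathscr{R}$, and both parts reduce to categorical bookkeeping with the identity and composition of continuous fiber-preserving maps. The only thing worth invoking is the uniqueness from Proposition \ref{pr-ufpm}, which makes the choice of $\varphi_{12}$ and $\varphi_{23}$ canonical and ensures the construction is well defined.
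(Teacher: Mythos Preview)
Your proof is correct and takes essentially the same approach as the paper, which simply says the proposition ``is proved directly by the definition.'' You have merely spelled out the obvious identity and composition checks that the paper leaves implicit.
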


\begin{proof}
	This proposition is proved directly by the definition.
\end{proof}

\begin{defn}
	Two slice-domains $\mathcal{G}_1,\mathcal{G}_2$ over $\mathbb{H}$ with distinguished point are called isomorphic or equivalent (symbolically $\mathcal{G}_1\cong\mathcal{G}_2$), if $\mathcal{G}_1\prec\mathcal{G}_2$ and $\mathcal{G}_2\prec\mathcal{G}_1$. We denote the equivalence class of $\mathcal{G}_1$ in $\mathscr{R}$ by $[\mathcal{G}_1]$.
\end{defn}

\begin{prop}\label{pr-fpei}
	Let $\mathcal{G}_\imath=(G_\imath,\pi_\imath,x_\imath)$, $\imath=1,2$ be two slice-domains over $\mathbb{H}$ with distinguished point. If $\mathcal{G}_1\cong\mathcal{G}_2$, and $\varphi_\imath:G_\imath\rightarrow G_{3-\imath}$, $\imath=1,2$ is the fiber preserving map from $\mathcal{G}_\imath$ to $\mathcal{G}_{3-\imath}$, then $\varphi_1$ is a homeomorphism from $G_1$ to $G_2$, and $\varphi_1^{-1}=\varphi_2$.
\end{prop}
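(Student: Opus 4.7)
The plan is to show $\varphi_2\circ\varphi_1=\id_{G_1}$ and $\varphi_1\circ\varphi_2=\id_{G_2}$ by appealing to the uniqueness of fiber preserving maps established in Proposition \ref{pr-ufpm} (which rests on Proposition \ref{pr-ul}). Once these two identities are in hand, bijectivity of $\varphi_1$ is immediate, and continuity of the inverse $\varphi_2$ is built into the definition of $\prec$, so $\varphi_1$ will automatically be a homeomorphism.

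More concretely, I would first verify that the composition $\varphi_2\circ\varphi_1:G_1\to G_1$ is a continuous fiber preserving map from $\mathcal{G}_1$ to $\mathcal{G}_1$. Continuity is clear from the continuity of $\varphi_1$ and $\varphi_2$ individually. For the fiber condition, a short computation gives
\begin{equation*}
\pi_1\circ(\varphi_2\circ\varphi_1)=(\pi_1\circ\varphi_2)\circ\varphi_1=\pi_2\circ\varphi_1=\pi_1,
\end{equation*}
and for the basepoint condition we have $(\varphi_2\circ\varphi_1)(x_1)=\varphi_2(x_2)=x_1$. Since $\id_{G_1}$ trivially has the same two properties, Proposition \ref{pr-ufpm} applied to the containment $\mathcal{G}_1\prec\mathcal{G}_1$ (valid by Proposition \ref{pr-weak}) forces
\begin{equation*}
\varphi_2\circ\varphi_1=\id_{G_1}.
\end{equation*}
Exchanging the roles of the indices $1$ and $2$ and repeating the same argument yields $\varphi_1\circ\varphi_2=\id_{G_2}$.

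From these two identities it follows at once that $\varphi_1:G_1\to G_2$ is bijective with set-theoretic inverse $\varphi_2$, i.e.\ $\varphi_1^{-1}=\varphi_2$. Since both $\varphi_1$ and $\varphi_2$ are continuous by the definition of $\prec$, the map $\varphi_1$ is a homeomorphism. No step here is genuinely difficult; the only point that requires a moment's care is the observation that Proposition \ref{pr-ul} really does apply, which uses that $G_1$ is connected (granted by the definition of a Riemann slice-domain) and that $G_1$ is Hausdorff (also part of Definition \ref{df-rsd}). In effect, the whole statement is a formal consequence of Proposition \ref{pr-ufpm} together with the categorical observation that $\prec$ admits a trivial ``identity'' arrow.
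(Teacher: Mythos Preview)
Your proposal is correct and follows essentially the same argument as the paper's proof: both show that the two compositions equal the respective identity maps by invoking the uniqueness of fiber preserving maps (Proposition \ref{pr-ufpm}), and then conclude that $\varphi_1$ is a homeomorphism with inverse $\varphi_2$. In fact, your version has the indices straight ($\varphi_2\circ\varphi_1=\id_{G_1}$ and $\varphi_1\circ\varphi_2=\id_{G_2}$), whereas the paper's proof contains a harmless typo swapping them.
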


\begin{proof}
	We notice that
	\begin{equation*}
	\varphi_1\circ\varphi_2:G_1\rightarrow G_1
	\end{equation*}
	is a fiber preserving map from $\mathcal{G}_1$ to $\mathcal{G}_1$. On the other hand, $id_{G_1}$ is also a fiber preserving map from $\mathcal{G}_1$ to $\mathcal{G}_1$. According to Proposition \ref{pr-ufpm},
	\begin{equation*}
	\varphi_1\circ\varphi_2={\id}_{G_1}.
	\end{equation*}
	Similarly, we also have
	\begin{equation*}
	\varphi_2\circ\varphi_1={\id}_{G_2}.
	\end{equation*}
	Then
	\begin{equation*}
	\varphi_1^{-1}=\varphi_2.
	\end{equation*}
	And since $\varphi_1$ and $\varphi_1^{-1}=\varphi_2$ are continuous. Then $\varphi_1$ is a homeomorphism.
\end{proof}

\begin{defn}\label{df-1asd}
	A slice-domain $\mathcal{G}_1=(G,\pi,x)$ over $\mathbb{H}$ with distinguished point is called schlicht, if it is isomorphic to the slice-domain $\mathcal{G}_0=(\pi(G),id_{\pi(G)},\pi(x))$ over $\mathbb{H}$ with distinguished point with $\pi(G)$ being a slice-domain in $\mathbb{H}$.

 We call $(G,\pi)$   schlicht  if $(G,\pi,y)$ is schlicht for some $y\in G$. A domain $U$ in $G$ is called schlicht (with respect to $\pi$), if $(U,\pi|_U)$ is a schlicht slice-domain over $\mathbb{H}$.
\end{defn}

Let $(G,\pi,x)$ be a slice-domain over $\mathbb{H}$ with distinguished point. According to Proposition \ref{pr-movebase}, $(G,\pi,x)$ is schlicht, if and only if $(G,\pi,y)$ is schlicht for each $y\in G$. Then we have $(G,\pi,x)$ is schlicht, if and only if $(G,\pi)$ is schlicht (or $G$ is schlicht with respect to $\pi$).

\section{Real-connectedness and finite-part paths}\label{sc-rf}

In this section, we introduce two new concepts, the real-connectedness and finite-part paths. Then we will prove that any two points in a Riemann slice domain over $\mathbb{H}$ can be connected by a finite-part path (see Theorem \ref{th-efpsp}).

\subsection{Real-connectedness} We will introduce a technical concept, real-connectedness. It provides a tool for the proof of Theorem \ref{th-efpsp}.

For each slice-domain $(G,\pi)$ over $\mathbb{H}$ (resp. slice-domain $(G,\pi,x)$ over $\mathbb{H}$ with distinguished point), $I\in\mathbb{S}$, and $U\subset G$, we set
\begin{equation*}
U_I:=\{q\in U:\pi(q)\in\mathbb{C}_I\}\qquad\mbox{and}\qquad U_\mathbb{R}:=\{q\in U:\pi(q)\in\mathbb{R}\}.
\end{equation*}

\begin{defn}\label{df-rc} 
	Let $(G,\pi)$ be a slice-domain over $\mathbb{H}$. A domain $U\subset G$ is called real-connected (with respect to $\pi$), if $U_\mathbb{R}$ is connected in $G_\mathbb{R}$. If $G$ is real-connected with respect to $\pi$,  then we call both $(G,\pi,x)$ and $(G,\pi)$ are  real-connected   for any $x\in G$.
\end{defn}

\begin{prop}\label{pr-rceq} 
	If a slice-domain $\mathcal{G}$ over $\mathbb{H}$ with distinguished point is real-connected, then $\mathcal{G}'$ is real-connected for each $\mathcal{G}'\in[\mathcal{G}]$.
\end{prop}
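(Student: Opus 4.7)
The plan is to reduce everything to the fact that a homeomorphism preserves connectedness, once we verify that an equivalence in $\mathscr{R}$ yields a homeomorphism that carries $G_\mathbb{R}$ onto $G'_\mathbb{R}$.

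First, write $\mathcal{G} = (G,\pi,x)$ and $\mathcal{G}' = (G',\pi',x')$, and assume $\mathcal{G}\cong\mathcal{G}'$. By Proposition \ref{pr-fpei}, there is a homeomorphism $\varphi:G\to G'$ (the fiber preserving map from $\mathcal{G}$ to $\mathcal{G}'$) whose inverse is the fiber preserving map from $\mathcal{G}'$ to $\mathcal{G}$, and in particular $\pi'\circ\varphi=\pi$.

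Next, I would observe that $\varphi$ restricts to a homeomorphism $G_\mathbb{R}\to G'_\mathbb{R}$. Indeed, for any $q\in G$ the identity $\pi'(\varphi(q))=\pi(q)$ gives
\begin{equation*}
q\in G_\mathbb{R}\ \Longleftrightarrow\ \pi(q)\in\mathbb{R}\ \Longleftrightarrow\ \pi'(\varphi(q))\in\mathbb{R}\ \Longleftrightarrow\ \varphi(q)\in G'_\mathbb{R},
\end{equation*}
so $\varphi(G_\mathbb{R})=G'_\mathbb{R}$, and the same argument applied to $\varphi^{-1}$ gives the reverse inclusion on the other side. Since $\varphi$ is a homeomorphism of $G$ onto $G'$, its restriction to the subspace $G_\mathbb{R}$ is a homeomorphism onto $G'_\mathbb{R}$.

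Finally, real-connectedness of $\mathcal{G}$ means $G_\mathbb{R}$ is connected; since the continuous image of a connected set is connected, $G'_\mathbb{R}=\varphi(G_\mathbb{R})$ is also connected, which by Definition \ref{df-rc} means $\mathcal{G}'$ is real-connected. There is no essential obstacle here — the whole statement reduces to the homeomorphism furnished by Proposition \ref{pr-fpei} together with the elementary topological fact that connectedness is preserved under continuous maps. The only thing worth being careful about is to invoke Proposition \ref{pr-fpei} (and not merely the definition $\mathcal{G}\prec\mathcal{G}'$) so that one genuinely has a homeomorphism rather than just a continuous fiber preserving map.
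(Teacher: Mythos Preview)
Your proof is correct and follows essentially the same approach as the paper: both invoke Proposition \ref{pr-fpei} to obtain a homeomorphism $\varphi$ between $G$ and $G'$, observe via the fiber-preserving relation $\pi'\circ\varphi=\pi$ that $\varphi$ carries $G_\mathbb{R}$ onto $G'_\mathbb{R}$, and conclude that connectedness transfers. The only cosmetic difference is that the paper argues by contradiction (pulling back a separation of $G'_\mathbb{R}$ to one of $G_\mathbb{R}$), whereas you argue directly (pushing forward connectedness along the continuous map); your direct version is, if anything, slightly cleaner.
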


\begin{proof}
	Suppose there exists $\mathcal{G}'=(G',\pi',x')\in[\mathcal{G}]$ being not real-connected. Then there exist two nonempty open sets $U_1$ and $U_2$ in $G'_\mathbb{R}$ with
	\begin{equation*}
	U_1\cap U_2=\varnothing,\qquad G'_{\mathbb{R}}\subset U_1\cup U_2\qquad\mbox{and}\qquad U_\imath\cap G'_{\mathbb{R}}\neq\varnothing,\ \imath=1,2.
	\end{equation*}
	We write $\mathcal{G}=(G,\pi,x)$, and let $\varphi:G\rightarrow G'$ be the fiber preserving map from $\mathcal{G}'$ to $\mathcal{G}$. According to Proposition \ref{pr-fpei}, $\varphi$ is a homeomorphism. And since $\varphi(G'_\mathbb{R})=G_\mathbb{R}$, it follows that $\varphi(U_1)$ and $\varphi(U_2)$ are two open sets in $G_\mathbb{R}$ with
	\begin{equation*}
	\varphi(U_1)\cap \varphi(U_2)=\varnothing,\qquad G_{\mathbb{R}}\subset \varphi(U_1)\cup \varphi(U_2)\qquad\mbox{and}\qquad\varphi(U_\imath)\cap G'_{\mathbb{R}}\neq\varnothing,\ \imath=1,2.
	\end{equation*}
	Then $G_\mathbb{R}$ is not connected in $G$. Therefore $G$ is not real-connected, which is a contradiction.
\end{proof}

\begin{prop}\label{pr-srch}
	Let $(G,\pi)$ be a slice-domain over $\mathbb{H}$ with distinguished point, and $U$ be a schlicht real-connected domain in $G$. Then $\pi(U)$ is a real-connected slice-domain in $\mathbb{H}$.
\end{prop}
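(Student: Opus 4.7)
The statement is essentially a translation of real-connectedness from the upstairs space $U$ down to its image under $\pi$, using the fact that schlichtness makes $\pi|_U$ a homeomorphism onto its image. Since the definition of ``schlicht'' already bakes in that $\pi(U)$ is a slice-domain in $\mathbb{H}$, the only remaining task is to verify that $\pi(U)\cap\mathbb{R}$ is connected in $\mathbb{R}$.

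The plan is as follows. First, unpack the schlichtness hypothesis: by Definition \ref{df-1asd}, $(U,\pi|_U,y)$ is equivalent to $(\pi(U),\id_{\pi(U)},\pi(y))$ for any chosen base point $y\in U$, and in particular $\pi(U)$ is a slice-domain in $\mathbb{H}$. Invoking Proposition \ref{pr-ufpm}, the fiber-preserving map from $(U,\pi|_U,y)$ to $(\pi(U),\id_{\pi(U)},\pi(y))$ is uniquely determined and, by the relation $\id_{\pi(U)}\circ\varphi=\pi|_U$, it must equal $\pi|_U$ itself. Proposition \ref{pr-fpei} then upgrades this to the statement that $\pi|_U\colon U\to\pi(U)$ is a homeomorphism (with $U$ carrying its subspace topology in $G$ and $\pi(U)$ the slice-topology inherited from $\mathbb{H}$).

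Second, restrict this homeomorphism to the real fibers. By definition $U_\mathbb{R}=(\pi|_U)^{-1}(\mathbb{R}\cap\pi(U))$, and since the slice-topology on $\mathbb{H}$ restricts to the Euclidean topology on $\mathbb{R}$ (noted in the excerpt just before Definition \ref{df-rc} in the earlier section), the restriction
\begin{equation*}
\pi|_{U_\mathbb{R}}\colon U_\mathbb{R}\longrightarrow\pi(U)\cap\mathbb{R}=(\pi(U))_\mathbb{R}
\end{equation*}
is a homeomorphism between the corresponding subspace topologies. Because $U$ is real-connected, $U_\mathbb{R}$ is connected in $G_\mathbb{R}$; the continuous image of a connected set is connected, hence $(\pi(U))_\mathbb{R}$ is connected in $\mathbb{R}$. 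Combined with the fact that $\pi(U)$ is already a slice-domain, this is exactly the assertion that $\pi(U)$ is a real-connected slice-domain in $\mathbb{H}$.

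I do not anticipate any real obstacle: the proof is a bookkeeping exercise that reduces everything to the homeomorphism $\pi|_U$ supplied by schlichtness. The only point that deserves a line of care is matching the two different meanings of the subscript ``$\mathbb{R}$''—upstairs it is defined via $\pi$, downstairs it is an intersection with $\mathbb{R}$—and confirming that $\pi|_U$ identifies these two sets, which is immediate from the definition.
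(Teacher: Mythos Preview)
Your proof is correct and follows essentially the same line as the paper's. The only cosmetic difference is that the paper packages the transfer of real-connectedness across the equivalence $(U,\pi|_U,x)\cong(\pi(U),\id_{\pi(U)},\pi(x))$ into a citation of Proposition~\ref{pr-rceq}, whereas you unwind that proposition inline by invoking Propositions~\ref{pr-ufpm} and~\ref{pr-fpei} to get the homeomorphism $\pi|_U$ and then pushing $U_\mathbb{R}$ forward directly.
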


\begin{proof}
	Let $x\in U$. Then $(U,\pi|_U,x)$ is schlicht. It follows that $(U,\pi|_U,x)$ and $(\pi(U),id_{\pi(U)},\pi(x))$ are equivalence. Thanks to Proposition \ref{pr-rceq}, $(\pi(U),id_{\pi(U)},\pi(x))$ is real-connected. Then $\pi(U)_{\mathbb{R}}$ is connected. According to Remark \ref{rk-p}, $\pi(U)$ is a slice-domain in $\mathbb{H}$. Therefore $\pi(U)$ is a real-connected slice-domain in $\mathbb{H}$.
\end{proof}

\begin{prop} \label{pr-sp} 
	Let $(G,\pi)$ be a slice-domain over $\mathbb{H}$, and $U$ be an open set in $G$. For each $q\in U$, there exists a schlicht real-connected domain $V\subset U$ containing $q$.
\end{prop}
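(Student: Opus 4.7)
The plan is to reduce the construction to the already-proved schlicht case, Proposition \ref{pr-rc1}, by first trivializing a small neighborhood of $q$ via the local slice-homeomorphism $\pi$, then pulling back the real-connected slice-domain produced in $\mathbb{H}$.

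First I would choose, using that $\pi$ is a local slice-homeomorphism, an open neighborhood $W$ of $q$ in $G$ with $W\subset U$ such that $\pi|_W:W\to\pi(W)$ is a slice-homeomorphism onto the slice-open set $\pi(W)\subset\mathbb{H}$. The point $\pi(q)$ then lies in the slice-open set $\pi(W)$, so by Proposition \ref{pr-rc1} there exists a real-connected slice-domain $V'\subset\pi(W)$ with $\pi(q)\in V'$.

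Next I would define $V:=(\pi|_W)^{-1}(V')\subset W\subset U$ and verify the three required properties. Since $\pi|_W$ is a slice-homeomorphism and $V'$ is a slice-domain in $\mathbb{H}$, $V$ is open in $G$ and connected, hence a domain in $G$ containing $q$. The restriction $\pi|_V:V\to V'$ is itself a slice-homeomorphism, so it is the fiber preserving map exhibiting the equivalence $(V,\pi|_V,q)\cong(V',\id_{V'},\pi(q))$; therefore $(V,\pi|_V)$ is schlicht in the sense of Definition \ref{df-1asd}. For real-connectedness, the homeomorphism $\pi|_V$ carries $V_\mathbb{R}=\pi|_V^{-1}(V'\cap\mathbb{R})$ bijectively onto $V'_\mathbb{R}$, and the latter is connected in $\mathbb{R}$ because $V'$ was chosen real-connected; hence $V_\mathbb{R}$ is connected, which is exactly the definition of $V$ being real-connected with respect to $\pi$.

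There is essentially no obstacle beyond bookkeeping: once the local slice-homeomorphism produces a chart $W\simeq\pi(W)$, the problem is transported verbatim into $\mathbb{H}$, where Proposition \ref{pr-rc1} does the real work. The only point requiring mild care is the translation of schlichtness and real-connectedness across the chart, which relies on $\pi|_W$ being a slice-homeomorphism (not merely continuous with continuous local inverses in some weaker sense) so that connectedness of $V'_\mathbb{R}$ transfers to connectedness of $V_\mathbb{R}$.
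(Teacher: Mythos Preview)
Your proposal is correct and follows essentially the same approach as the paper: trivialize locally via the slice-homeomorphism $\pi$, apply Proposition~\ref{pr-rc1} in $\mathbb{H}$ to obtain a real-connected slice-domain, and pull it back. Your write-up is in fact slightly more careful than the paper's (you explicitly arrange the chart inside $U$ and spell out why schlichtness and real-connectedness transfer), but the argument is the same.
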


\begin{proof}
	For each $q\in U$, there exists a domain $U'$ in $G$ and a slice-domain $V'$ in $\mathbb{H}$, such that $q\in U'$ and $\pi|_{U'}:U'\rightarrow V'$ is slice-homeomorphism. According to Proposition \ref{pr-rc1}, there exists a real-connected slice-domain $W\subset V'$ containing $\pi(q)$. Then $\pi|_{U'}^{-1}(W)$ is a schlicht real-connected domain, and $q\in\pi|_{U'}^{-1}(W)\subset U$.
\end{proof}

\subsection{Finite-part paths} In this subsection, we will introduce finite-part paths, which describe axial symmetry in Riemann slice-domains.

\begin{defn}
	Let $(G,\pi)$ be a slice-domain over $\mathbb{H}$. A path $\gamma$ in $G$ is called slice preserving, if $\pi\circ\gamma$ is a slice preserving path in $\mathbb{H}$.
\end{defn}

\begin{prop} \label{pr-pir}
	Let $(G,\pi)$ be a slice-domain over $\mathbb{H}$, and $U$ be a schlicht real-connected domain in $G$. Then for each $p,q\in U$, there exist  two slice preserving paths $\alpha$ and $\beta$ in $U$, such that the composition  $\alpha\beta$ is  a path from $p$ to $q$.
\end{prop}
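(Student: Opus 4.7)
The plan is to reduce the statement directly to the already-established Proposition \ref{pr-rc2}(d) by transporting everything down to $\mathbb{H}$ via $\pi|_U$ and then lifting back.

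First I would unpack the hypothesis ``$U$ is schlicht''. By Definition \ref{df-1asd}, there is a $y\in U$ for which $(U,\pi|_U,y)\cong(\pi(U),\mathrm{id}_{\pi(U)},\pi(y))$ and $\pi(U)$ is a slice-domain in $\mathbb{H}$. Proposition \ref{pr-fpei} then says that the fiber preserving map realizing this equivalence is a homeomorphism from $U$ onto $\pi(U)$ (the latter carrying the subspace topology of $(\mathbb{H},\tau_s)$); since such a map is forced by the fiber condition to coincide with $\pi|_U$, we obtain that $\pi|_U:U\to\pi(U)$ is itself a homeomorphism.

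Next I would apply Proposition \ref{pr-srch} to conclude that $\pi(U)$ is a real-connected slice-domain in $\mathbb{H}$. Now given $p,q\in U$, apply Proposition \ref{pr-rc2}(d) to the points $\pi(p),\pi(q)\in\pi(U)$: this produces two slice preserving paths $\tilde\alpha,\tilde\beta$ in $\pi(U)$ with
\begin{equation*}
\tilde\alpha(0)=\pi(p),\qquad \tilde\alpha(1)=\tilde\beta(0),\qquad \tilde\beta(1)=\pi(q).
\end{equation*}

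Finally I would lift via $\alpha:=(\pi|_U)^{-1}\circ\tilde\alpha$ and $\beta:=(\pi|_U)^{-1}\circ\tilde\beta$. Continuity of $(\pi|_U)^{-1}$ makes $\alpha,\beta$ into paths in $U$, and the endpoint conditions transfer exactly, so $\alpha\beta$ is a well-defined path from $p$ to $q$ in $U$. Moreover $\pi\circ\alpha=\tilde\alpha$ and $\pi\circ\beta=\tilde\beta$ each lie in a single slice $\mathbb{C}_I$, which is precisely the definition of a slice preserving path in $U$.

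There is no genuine obstacle here; the only delicate step is the first one, namely verifying that ``schlicht'' upgrades $\pi|_U$ from a mere local slice-homeomorphism to a global homeomorphism onto $\pi(U)$, so that paths (rather than only small local pieces) can be lifted. Once this is in hand, the rest is mechanical transport through Proposition \ref{pr-rc2}(d).
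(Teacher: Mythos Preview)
Your proposal is correct and follows essentially the same approach as the paper: the paper's proof is the single line ``This follows immediately from Propositions \ref{pr-rc2}(d) and \ref{pr-srch},'' and your argument is precisely an unpacking of that sentence, making explicit the homeomorphism $\pi|_U:U\to\pi(U)$ furnished by schlichtness and the lifting of the two paths.
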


\begin{proof}
	This follows immediately from Propositions \ref{pr-rc2} (d) and \ref{pr-srch}.
\end{proof}

\begin{defn}
	Let $(G,\pi)$ be a slice-domain over $\mathbb{H}$, $N\in\mathbb{N}^+$ and $\gamma_\imath$ be a slice preserving path in $G$ for each $\imath\in\{1,2,...,N\}$.
	\begin{equation*}
	\gamma:=(\gamma_1,\gamma_2,...,\gamma_N)
	\end{equation*}
	is called an $N$-part path in $G$, if
	\begin{equation*}
	\gamma_\imath(1)=\gamma_{\imath+1}(0)\in G_{\mathbb{R}},\qquad\imath=1,2,...,N-1.
	\end{equation*}
	We set
	\begin{equation*}
	\gamma(t):=\left\{
	\begin{aligned}
	&\gamma_{\lfloor tN\rfloor+1}(\{tN\}),\qquad &&t\in[0,1),
	\\&\gamma_N(1), &&t=1.
	\end{aligned}
	\right.
	\end{equation*}
	We call $\gamma$ is from $\gamma(0)$ to $\gamma(1)$.
	
	The set of all $N$-part paths in $G$ is denoted by $\mathcal{P}^N(G)$. We set
	\begin{equation*}
	\mathcal{P}^{\infty}(G):=\bigsqcup_{\imath\in\mathbb{N}^+} \mathcal{P}^\imath(G).
	\end{equation*}
	We say that elements of $\mathcal{P}^{\infty}(G)$ are finite-part paths in $G$.
\end{defn}

\begin{prop} \label{pr-fpp} 
	Let $(G,\pi)$ be a slice-domain over $\mathbb{H}$, $N\in\mathbb{N}^+$ and $\alpha_\imath$ be a slice preserving path in $G$ for each $\imath\in\{1,2,...,N\}$. If
	\begin{equation*}
	\alpha_\imath(1)=\alpha_{\imath+1}(0),\qquad\imath=1,2,...,N-1,
	\end{equation*}
	then there exist $m\in\mathbb{N}^+$, a finite-part path $\gamma=(\gamma_1,\gamma_2,...,\gamma_m)$ in $G$ and a monotonically increasing bijection $\phi:[0,1]\rightarrow[0,1]$, such that
	\begin{equation}\label{eq-spp}
	(\prod_{\imath=1}^m \gamma_\imath)\circ\phi=\prod_{\imath=1}^N \alpha_\imath.
	\end{equation}
\end{prop}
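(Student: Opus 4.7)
The plan is to identify those indices $\imath$ at which $\alpha_\imath(1)$ already lies in $G_\mathbb{R}$, use them as block boundaries, merge each maximal block of consecutive $\alpha_\imath$'s whose internal joins are off $G_\mathbb{R}$ into a single slice preserving path, and finally reconcile the parametrizations by a piecewise linear bijection $\phi$.

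The key structural observation is that if $\alpha_\imath,\alpha_{\imath+1}$ are slice preserving with images under $\pi$ lying in $\mathbb{C}_{I_\imath}$ and $\mathbb{C}_{I_{\imath+1}}$ respectively, and $\pi(\alpha_\imath(1))\notin\mathbb{R}$, then $\mathbb{C}_{I_\imath}=\mathbb{C}_{I_{\imath+1}}$: a non-real quaternion can belong to two slice planes only via the identification $\mathbb{C}_I=\mathbb{C}_{-I}$. Consequently $\alpha_\imath\alpha_{\imath+1}$ is again slice preserving, and by induction any maximal run of consecutive paths joined off $G_\mathbb{R}$ composes to a single slice preserving path.

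Writing $0=k_0<k_1<\cdots<k_m=N$ for the block boundaries, so that $\alpha_{k_j}(1)\in G_\mathbb{R}$ for $1\le j<m$ while every interior join is off $G_\mathbb{R}$, I would set $\gamma_j:=\prod_{l=k_{j-1}+1}^{k_j}\alpha_l$ and check that $\gamma:=(\gamma_1,\ldots,\gamma_m)\in\mathcal{P}^m(G)$: each $\gamma_j$ is slice preserving by the observation above, while $\gamma_j(1)=\alpha_{k_j}(1)\in G_\mathbb{R}$ holds by the choice of boundaries.

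For the parametrization, let $n_j:=k_j-k_{j-1}$. Inside $\prod_{\imath=1}^m\gamma_\imath$ the subpath $\alpha_l$ (with $k_{j-1}<l\le k_j$) occupies the interval $[\tfrac{j-1}{m}+\tfrac{l-k_{j-1}-1}{m n_j},\,\tfrac{j-1}{m}+\tfrac{l-k_{j-1}}{m n_j}]$, whereas in $\prod_{\imath=1}^N\alpha_\imath$ it occupies $[\tfrac{l-1}{N},\tfrac{l}{N}]$. Defining $\phi$ to send the first interval linearly onto the second for each $l=1,\ldots,N$ yields a monotonically increasing bijection of $[0,1]$ realizing \eqref{eq-spp}. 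The main obstacle is purely clerical, namely keeping track of the two partitions and verifying continuity of $\phi$ at the breakpoints; once the slice-merging observation is in hand the remainder is bookkeeping.
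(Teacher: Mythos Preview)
Your approach coincides with the paper's: both identify the indices where $\alpha_\imath(1)\in G_{\mathbb{R}}$, use them as block boundaries, merge each block into a single slice preserving path, and finish with a piecewise linear reparametrization. Your proposal is in fact more explicit than the paper's, since you spell out why a block merges to a slice preserving path (a non-real join forces $\mathbb{C}_{I_\imath}=\mathbb{C}_{I_{\imath+1}}$), a point the paper leaves tacit. One small slip: for \eqref{eq-spp} to read $(\prod\gamma_\imath)\circ\phi=\prod\alpha_\imath$, the map $\phi$ must send the \emph{second} interval (in the $\alpha$-parametrization) to the \emph{first} (in the $\gamma$-parametrization), not the other way around; you have described $\phi^{-1}$.
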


\begin{proof}
	There exist $m\in\mathbb{N}^+$, and $N_\imath\in\mathbb{N}^+$, $\imath=0,1,2,..,m+1$, such that
	\begin{equation*}
	N_0=0,\quad 1=N_1<N_2<...<N_{m+1}=N,\quad \alpha_n(1)\notin\mathbb{R}\quad\mbox{and}\quad \alpha_{N_\imath}(1)\in\mathbb{R}
	\end{equation*}
	for each $n\in\{1,2,...,N-1\}\backslash\{N_2,N_3,...,N_m\}$ and $\imath=2,3,...m$. Let
	\begin{equation*}
	\gamma_\imath:=\prod_{\jmath=N_\imath}^{N_{\imath+1}}\alpha_\jmath,\qquad \imath=1,2,...,m,
	\end{equation*}
	and
	\begin{equation*}
	\phi(t):=\frac{N_{\lfloor Nt\rfloor}+\{Nt\}(N_{\lceil Nt\rceil}-N_{\lfloor Nt\rfloor})}{N},\qquad\forall t\in[0,1].
	\end{equation*}
	Then (\ref{eq-spp}) holds.
\end{proof}

\begin{defn} 
	Let $N\in\mathbb{N}^+$, $\gamma$ be an $N$-part path in $\mathbb{H}$ and $\mathcal{G}=(G,\pi,x)$ be a slice-domain over $\mathbb{H}$ with distinguished point. We say that $\gamma$ is contained in $\mathcal{G}$ (denoted by $\gamma\prec\mathcal{G}$), if there exists an $N$-part path $\alpha$ in $G$ with
	\begin{equation*}
	\alpha(0)=x\qquad\mbox{and}\qquad\pi(\alpha)=\gamma,
	\end{equation*}
	where $\pi:\mathcal{P}^{\infty} (G)\rightarrow\mathcal{P}^{\infty} (\mathbb H)$ is the map, defined by
	\begin{equation*}
	\pi(\beta):=(\pi(\beta_1),\pi(\beta_2),...,\pi(\beta_m))
	\end{equation*}
	for each $m\in\mathbb{N}^+$ and $\beta\in\mathcal{P}^m(G)$. We remark the map $\pi$ in the right side is the projection from $\mathcal{G}=(G,\pi,x)$.
\end{defn}

\begin{prop} \label{def:notation-path}
	Let $\gamma$ be a finite-part path in $\mathbb{H}$ and $\mathcal{G}=(G,\pi,x)$ be a slice-domain over $\mathbb{H}$ with distinguished point. If $\gamma\prec\mathcal{G}$, there exists a unique finite-part path $\alpha$ in $G$, such that $\pi(\alpha)=\gamma$ and $\alpha(0)=x$. We call $\alpha$  the lifting of $\gamma$ to $\mathcal{G}$, denoted by $\gamma_{\mathcal{G}}$.
\end{prop}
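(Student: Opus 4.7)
The plan is to leverage the uniqueness of lifting for single continuous paths (Proposition \ref{pr-ul}) piece by piece, using an induction on the number of components of the finite-part path. Existence of the lifting $\alpha$ is built into the assumption $\gamma\prec\mathcal{G}$, so all that remains to establish is uniqueness.

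Suppose $\gamma=(\gamma_1,\ldots,\gamma_N)\in\mathcal{P}^N(\mathbb{H})$ and that $\alpha=(\alpha_1,\ldots,\alpha_N)$ and $\alpha'=(\alpha'_1,\ldots,\alpha'_N)$ are two $N$-part paths in $G$ with $\pi(\alpha)=\gamma=\pi(\alpha')$ and $\alpha(0)=x=\alpha'(0)$. Unwinding the definitions, this means $\pi\circ\alpha_i=\gamma_i=\pi\circ\alpha'_i$ for each $i\in\{1,\ldots,N\}$ and $\alpha_1(0)=x=\alpha'_1(0)$. I would then prove by induction on $i$ that $\alpha_i=\alpha'_i$ as continuous maps $[0,1]\to G$.

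For the base case $i=1$, both $\alpha_1$ and $\alpha'_1$ are continuous maps from the connected space $[0,1]$ into $G$ satisfying $\pi\circ\alpha_1=\pi\circ\alpha'_1$ and agreeing at the point $0$. Proposition \ref{pr-ul} therefore gives $\alpha_1=\alpha'_1$. For the inductive step, assume $\alpha_i=\alpha'_i$; then by the definition of an $N$-part path in $G$ the endpoints match, so $\alpha_{i+1}(0)=\alpha_i(1)=\alpha'_i(1)=\alpha'_{i+1}(0)$, and also $\pi\circ\alpha_{i+1}=\gamma_{i+1}=\pi\circ\alpha'_{i+1}$. Applying Proposition \ref{pr-ul} once more to $\alpha_{i+1},\alpha'_{i+1}:[0,1]\to G$ yields $\alpha_{i+1}=\alpha'_{i+1}$, and the induction terminates at $i=N$.

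There is no substantial obstacle here beyond correctly packaging the hypothesis: the Hausdorffness of $G$ (together with connectedness of $[0,1]$) that is encoded in Proposition \ref{pr-ul} does all the real work, and the finite-part structure only requires us to apply that proposition $N$ times, chaining the endpoint conditions $\alpha_i(1)=\alpha'_i(1)\in G_{\mathbb{R}}$ between successive components.
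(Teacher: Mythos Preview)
Your proof is correct and follows essentially the same approach as the paper: apply Proposition \ref{pr-ul} to each component $\alpha_i$, $\alpha'_i$ in turn, using the endpoint relation $\alpha_{i+1}(0)=\alpha_i(1)=\alpha'_i(1)=\alpha'_{i+1}(0)$ to chain the inductions. The paper phrases this as ``recursion'' rather than induction, but the content is identical.
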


\begin{proof}
	For each $N\in\mathbb{N}^+$ and $N$-part path $\gamma$ in $G$. If $\alpha,\beta$ are finite-part paths in $G$ with
	\begin{equation*}
	\pi(\alpha)=\gamma=\pi(\beta)\qquad\mbox{and}\qquad\alpha(0)=x=\beta(0).
	\end{equation*}
	It follows that $\alpha$ and $\beta$ are $N$-part paths and
	\begin{equation*}
	\pi(\alpha_\imath)=\gamma_\imath=\pi(\beta_\imath),\qquad\imath=1,2,...,N.
	\end{equation*}
	According to Proposition \ref{pr-ul} and recursion, we have
	\begin{equation*}
	\alpha_\imath=\beta_\imath\qquad\mbox{and}\qquad\alpha_{\jmath+1}(0)=\alpha_\jmath(1)=\beta_\jmath(1)=\beta_{\jmath+1}(0)
	\end{equation*}
	for each $\imath\in\{1,2,...,N\}$ and $\jmath\in\{1,2,...,N-1\}$. It is clear that $\alpha=\beta$.
\end{proof}

\begin{thm}\label{th-efpsp} 
	Let $\mathcal{G}=(G,\pi,x)$ be a slice-domain over $\mathbb{H}$ with distinguished point. Then for each $q\in G$, there exists a finite-part path $\gamma$ in $G$ from $x$ to $q$. 
\end{thm}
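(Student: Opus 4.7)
My plan combines three earlier tools: the path-connectedness of $G$ from Remark~\ref{rk-pc}, the existence of schlicht real-connected neighborhoods from Proposition~\ref{pr-sp}, and the conversion of a chain of slice preserving paths into a finite-part path from Proposition~\ref{pr-fpp}. The overall strategy is to start with an arbitrary continuous path from $x$ to $q$, cut it into small pieces each contained in a schlicht real-connected domain, upgrade each piece to a concatenation of slice preserving paths inside that domain, and then reassemble the whole chain into a finite-part path.

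First, I would invoke Remark~\ref{rk-pc} to fix a continuous path $\alpha:[0,1]\to G$ with $\alpha(0)=x$ and $\alpha(1)=q$. By Proposition~\ref{pr-sp}, every point of $G$ has a schlicht real-connected neighborhood; pulling these back along $\alpha$ yields an open cover of the compact interval $[0,1]$, so the Lebesgue number lemma furnishes a partition $0=t_0<t_1<\cdots<t_n=1$ and schlicht real-connected domains $U_1,\ldots,U_n\subset G$ with $\alpha([t_{i-1},t_i])\subset U_i$ for every $i$.

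Next, within each $U_i$, Proposition~\ref{pr-pir} produces two slice preserving paths $\beta_i,\beta_i'$ in $U_i$ whose composition is a path from $\alpha(t_{i-1})$ to $\alpha(t_i)$. Concatenating these pair by pair over $i=1,\ldots,n$ gives a chain of $2n$ slice preserving paths $\alpha_1,\ldots,\alpha_{2n}$ in $G$ with matching endpoints $\alpha_j(1)=\alpha_{j+1}(0)$, starting at $x$ and ending at $q$. Applying Proposition~\ref{pr-fpp} to this chain then yields a finite-part path $\gamma=(\gamma_1,\ldots,\gamma_m)$ in $G$ together with a monotone reparametrization of $[0,1]$, and since reparametrization preserves endpoints we obtain $\gamma(0)=x$ and $\gamma(1)=q$, as required.

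The step I expect to be the most delicate is the implicit use inside Proposition~\ref{pr-fpp} that consecutive slice preserving paths meeting at a non-real point automatically lie in a common slice $\mathbb{C}_I$, so that merging them actually yields a genuine slice preserving path. This is automatic in our setting because a non-real point of $G$ projects into exactly one slice, and any slice preserving path through such a point is forced into that slice; all the remaining work is routine compactness and careful bookkeeping of slice-labels along the chain.
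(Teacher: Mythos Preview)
Your proposal is correct and follows essentially the same strategy as the paper: take a path from $x$ to $q$, cover its image by schlicht real-connected domains, use compactness of $[0,1]$ to subdivide, apply Proposition~\ref{pr-pir} on each piece, and finish with Proposition~\ref{pr-fpp}. The only real difference is that you invoke the Lebesgue number lemma on $[0,1]$ to obtain the partition, whereas the paper extracts a minimal finite subcover and argues it covers each point of $[0,1]$ at most twice before producing the subdivision points; your route is slightly cleaner but not genuinely different in content.
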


\begin{proof} 
	According to Remark \ref{rk-pc}, for each $q\in G$, there exists a path $\alpha$ in $G$ from $x$ to $q$. As a result of Proposition \ref{pr-sp}, there exists a schlicht real-connected domain $U_t\subset G$ containing $\alpha(t)$ for each $t\in[0,1]$. Notice that $\alpha$ is continuous, we see that the connected component $W_t$ of $\alpha^{-1}(U_t)$ containing $t$ is open in $[0,1]$. And since $[0,1]$ is compact, it follows that there exist a minimal $m\in\mathbb{N}^+$, and $t_\imath\in[0,1]$, $\imath=1,2,...,m$, such that
	\begin{equation*}
	[0,1]\subset\bigcup_{\imath=1}^m W_{t_\imath}\qquad\mbox{and}\qquad 0<t_0<t_1<...<t_m<1.
	\end{equation*}
	We notice that $\cup_{\imath=1}^m W_{t_\imath}$ covers each point in $[0,1]$ at most twice. Otherwise, there exists a point $q\in W_a\cap W_b\cap W_c$ in $[0,1]$, and $a,b,c\in\{t_1,t_2,...,t_m\}$ are different from each other. Then
	\begin{equation*}
	W:=W_a\cup W_b\cup W_c
	\end{equation*}
	is an open interval in $[0,1]$. Without loss of generality, we suppose
	\begin{equation*}
	\inf(W_a)=\inf(W)\qquad\mbox{and}\qquad\sup(W_b)=\sup(W)\qquad(\mbox{resp.}\ \sup(W_a)=\sup(W)).
	\end{equation*}
	Then $W=W_a\cup W_b$ (resp. $W=W_a$). It follows that we can remove $W_c$, and $m$ is not minimal. Therefore there exist a bijection
	\begin{equation*}
	L:\{1,2,...,m\}\rightarrow\{1,2,...,m\}
	\end{equation*}
	(reordering $\{W_{t_\imath}\}_{\imath=1}^m$ by the order of $\sup(W_{t_\imath})$), and
	\begin{equation*}
	s_\imath\in[0,1],\qquad\imath=1,2,...,m+1
	\end{equation*}
	with
	\begin{equation*}
	0=s_1<s_2<...<s_{m+1}=1\qquad\mbox{and}\qquad[s_\jmath,s_{\jmath+1}]\subset W_{t_{L(\jmath)}},\quad\jmath=1,2,...,m.
	\end{equation*}
	Then
	\begin{equation*}
	\alpha([s_\imath,s_{\imath+1}])\subset U_{t_{L(\imath)}},\qquad\imath=1,2,...,m.
	\end{equation*}
	Thanks to Proposition \ref{pr-pir}, there exist two slice preserving paths $\beta_{2\imath}$ and $\beta_{2\imath+1}$ such that $\beta_{2\imath}\beta_{2\imath+1}$ be a path from $\alpha(s_\imath)$ to $\alpha(s_{\imath+1})$ for each $\imath\in\{1,2,...,m\}$. Then $\Pi_{\imath=1}^{2m+1}\beta_\imath$ is a path from $x$ to $q$. According to Proposition \ref{pr-fpp}, there exists a finite-part path from $x$ to $q$.
\end{proof}

\section{Unions of Riemann slice-domains}\label{sc-urs}

In this section, we construct unions of Riemann slice-domains, following \cite[Pages 91-94]{Fritzsche2002001B}. This provides a basis to study the envelope of slice regularity in Section \ref{sc-esh}.

\begin{defn}
	Let $\Lambda$ be an index set, and $\mathcal{G}$, $\mathcal{G}_\lambda$, $\lambda\in\Lambda$, be slice-domains over $\mathbb{H}$ with distinguished point. $\mathcal{G}$ is called a upper (resp. lower) bound of $\{\mathcal{G}_\lambda\}_{\lambda\in\Lambda}$, if $\mathcal{G}_\lambda\prec\mathcal{G}$ (resp. $\mathcal{G}_\lambda\succ\mathcal{G}$) for each $\lambda\in\Lambda$.
\end{defn}

\begin{defn}\label{de-sdui}
	Let $\Lambda$ be an index set, and $\mathcal{G}$, $\mathcal{G}_\lambda$, $\lambda\in\Lambda$ be slice-domains over $\mathbb{H}$ with distinguished point. $\mathcal{G}$ is called a supremum (resp. infimum) or union (resp. intersection) of $\{\mathcal{G}_\lambda\}_{\lambda\in\Lambda}$, if $\mathcal{G}\prec\mathcal{G}'$ (resp. $\mathcal{G}\succ\mathcal{G}'$) for each upper (resp. lower) bound $\mathcal{G}'$ of $\{\mathcal{G}_\lambda\}_{\lambda\in\Lambda}$.
	
	We denote the set of all unions (resp. intersections) of $\{\mathcal{G}_\lambda\}_{\lambda\in\Lambda}$ by $\cup_{\lambda\in\Lambda}\mathcal{G}_\lambda$ (resp. $\cap_{\lambda\in\Lambda}\mathcal{G}_\lambda$).
\end{defn}

\begin{rmk}
	The definition of the union of Riemann domains in \cite[Page 94]{Fritzsche2002001B} has a contradiction. We set
	\begin{equation*}
	\mathbb{B}_0:=B_{\mathbb{H}}(0,1)\qquad\mbox{and}\qquad\mathcal{G}_0:=(\mathbb{B}_0,id_{\mathbb{B}_0},0).
	\end{equation*}
	Then the cardinality of $[\mathcal{G}_0]$ is infinite. We set
	\begin{equation*}
	\mathcal{L}:=\bigcup_{\mathcal{H}\in[\mathcal{G}_0]}\mathcal{H},
	\end{equation*}
	and notice that $\mathcal{L}\in[\mathcal{G}_0]$. However, the statuses of elements in $[\mathcal{G}_0]$ are equal. It implies that $\mathcal{L}$ would not be any element of $[\mathcal{G}_0]$, which is a contradiction.
\end{rmk}

Let $q\in\mathbb{H}$, $\Lambda$ be an index set and $\mathcal{G}_{\lambda}=(G_{\lambda},\pi_{\lambda},x_{\lambda})$, $\lambda\in\Lambda$, be slice-domains over $\mathbb{H}$ with distinguished point with $\pi_{\lambda}(x_{\lambda})=q$. Now, we construct a union of $\{\mathcal{G}_\lambda\}_{\lambda\in\Lambda}$ with a little revise, following \cite[Page 91-92]{Fritzsche2002001B}. According to the axiom of choice, there exists a subset $\Lambda'$ of $\Lambda$, such that the cardinality of $[\mathcal{G}_\lambda]\cap\{\mathcal{G}_\rho\}_{\rho\in\Lambda'}$ is one for each $\lambda\in\Lambda$. We set
\begin{equation*}
X:=\bigsqcup_{\lambda\in\Lambda'}G_{\lambda},
\end{equation*}
and let the topology of $X$ be the disjoint union topology. An equivalence relation $\sim$ on $X$ is said to have property $(P)$ if the followings hold:

1. $x_\lambda\sim x_\rho$ for each $\lambda,\rho\in\Lambda'$.

2. If $\alpha:[0,1]\rightarrow G_\lambda$ and $\beta:[0,1]\rightarrow G_\rho$ are continuous paths with $\alpha(0)\sim\beta(0)$ and $\pi_{\lambda}\circ\alpha=\pi_{\rho}\circ\beta$, then $\alpha(1)\sim\beta(1)$.

Let $\varphi_{\lambda}:G_{\lambda}\rightarrow X$ be the canonical injection, i.e.,
\begin{equation*}
\varphi_\lambda(x):=x,\qquad\forall\ \lambda\in\Lambda'\ \mbox{and}\ x\in G_\lambda.
\end{equation*}

And let $\pi_X:X\rightarrow\mathbb{H}$ be the map with
\begin{equation*}
\pi_X\circ\varphi_{\lambda}=\pi_{\lambda},\qquad\forall\ \lambda\in\Lambda'.
\end{equation*}

\begin{prop}\label{pr-pp} 
	Let $\sim$ be the equivalence relation on $X$, such that $x\sim y$, if and only if $\pi_X(x)=\pi_X(y)$, for each $x,y\in X$. Then $\sim$ has property $(P)$.
\end{prop}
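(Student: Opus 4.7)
The plan is to verify the two axioms of property $(P)$ directly from the definition of $\pi_X$, since the relation $\sim$ is tautologically an equivalence relation (it is pullback of equality under a function).

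First I would check condition $1$. For each $\lambda \in \Lambda'$, the distinguished point $x_\lambda \in G_\lambda$ satisfies $\pi_\lambda(x_\lambda) = q$ by the standing assumption on the family $\{\mathcal{G}_\lambda\}_{\lambda \in \Lambda}$. Viewing $x_\lambda$ as an element of $X$ via $\varphi_\lambda$, the defining relation $\pi_X \circ \varphi_\lambda = \pi_\lambda$ gives $\pi_X(x_\lambda) = q$. Hence $\pi_X(x_\lambda) = \pi_X(x_\rho) = q$ for all $\lambda, \rho \in \Lambda'$, which means $x_\lambda \sim x_\rho$.

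For condition $2$, suppose $\alpha:[0,1]\to G_\lambda$ and $\beta:[0,1]\to G_\rho$ are continuous paths with $\alpha(0)\sim\beta(0)$ and $\pi_\lambda\circ\alpha = \pi_\rho\circ\beta$. The crucial observation is that the hypothesis $\alpha(0)\sim\beta(0)$ is not actually needed here: the defining compatibility $\pi_X \circ \varphi_\mu = \pi_\mu$ (for $\mu = \lambda, \rho$) yields
\begin{equation*}
\pi_X(\alpha(1)) = \pi_\lambda(\alpha(1)) = (\pi_\lambda\circ\alpha)(1) = (\pi_\rho\circ\beta)(1) = \pi_\rho(\beta(1)) = \pi_X(\beta(1)),
\end{equation*}
so $\alpha(1)\sim\beta(1)$ by the definition of $\sim$.

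There is no real obstacle here; the statement is essentially a tautology once one parses the definitions. The only subtlety worth flagging is the implicit identification of $\alpha(t) \in G_\lambda$ with $\varphi_\lambda(\alpha(t)) \in X$, which is why $\pi_X$ may be applied to $\alpha(1)$ and $\beta(1)$ in the displayed equation. In the write-up I would make this identification explicit in one line before the calculation, and the proof then reduces to the two short derivations above.
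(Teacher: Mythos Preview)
Your proposal is correct and follows essentially the same approach as the paper. The paper's own proof is even terser---it checks only condition~2 via the one-line implication $\pi_X\circ\alpha=\pi_X\circ\beta\Rightarrow\pi_X(\alpha(1))=\pi_X(\beta(1))$ and leaves condition~1 implicit---so your write-up is, if anything, more complete.
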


\begin{proof}
	For each $\alpha,\beta\in\mathcal{P}(X)$ with $\pi_X\circ\alpha=\pi_X\circ\beta$, we have $\pi_X\circ\alpha(1)=\pi_X\circ\beta(1)$, then $\alpha(1)\sim\beta(1)$.
\end{proof}

We denote by $E$ the family of all equivalence relations on $X$ with property $(P)$. Thanks to Proposition \ref{pr-pp}, $E$ is not empty.

Let $\sim_P$ be the equivalence relations, such that $x\sim_P y$, if and only if $x\sim y$ for each $\sim$ in $E$, where $x,y\in X$.
\begin{prop}\label{pr-sfe}
$\sim_P$ is finer than each equivalence relation on $X$ with property $(P)$.
\end{prop}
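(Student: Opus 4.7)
The plan is to unwind the definition of $\sim_P$, viewing equivalence relations as subsets of $X\times X$. From that perspective, $\sim_P$ is exactly the intersection $\bigcap_{\sim\, \in\, E} \sim$, and ``$\sim_P$ finer than $\sim$'' is synonymous with $\sim_P\,\subseteq\,\sim$. Once this is recognized, the proposition becomes a one-line matter of instantiating the universal quantifier in the definition.

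Before doing that, I would make sure the notion of ``finer'' is applicable by checking that $\sim_P$ really is an equivalence relation on $X$. Reflexivity, symmetry, and transitivity all transfer to arbitrary intersections of equivalence relations: for any $x,y,z\in X$ and any $\sim\,\in\,E$ one has $x\sim x$; $x\sim y\Rightarrow y\sim x$; and $x\sim y$ together with $y\sim z$ implies $x\sim z$. Taking the conjunction over $\sim\,\in\,E$ yields the same three properties for $\sim_P$. Proposition \ref{pr-pp} guarantees $E\neq\varnothing$, so this conjunction is non-vacuous and $\sim_P$ is a well-defined equivalence relation.

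With this in hand, the main claim is immediate: fix any $\sim\,\in\,E$ and any $x,y\in X$ with $x\sim_P y$. The defining clause of $\sim_P$ asserts that $x\sim' y$ for every $\sim'\,\in\,E$; specializing to $\sim'\,=\,\sim$ gives $x\sim y$. Hence $\sim_P\,\subseteq\,\sim$, which is exactly the statement that $\sim_P$ is finer than $\sim$. There is no serious obstacle here; the proposition is essentially the observation that $E$ is closed under arbitrary intersections, so it possesses a minimum element under the refinement order, namely $\sim_P$. This is the property that will later be needed to single out $\sim_P$ as the canonical gluing relation for the union construction of Definition \ref{de-sdui}.
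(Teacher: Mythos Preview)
Your proposal is correct and follows the same approach as the paper, which simply states that the proposition is proved directly from the definition of $\sim_P$. Your version is more explicit---verifying that $\sim_P$ is an equivalence relation and then specializing the universal quantifier---but this is exactly the unwinding the paper leaves implicit.
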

\begin{proof}
	This proposition is proved directly by the definition of $\sim_P$.
\end{proof}
Let $\phi:X\rightarrow X/\sim_P$ be the projection of $\sim_P$, defined by
\begin{equation*}
\phi(x)=[x]_P,\qquad\forall x\in X,
\end{equation*}
where $[x]_P$ is the equivalence class of $x$ in $X$ with respect to $\sim_P$. It is clear that $\sim_P$ has the property $(P)$.
\begin{prop}\label{pr-pp1}
	For each $x,y\in X$ with $x\sim_P y$, we have $\pi_X(x)=\pi_X(y)$.
\end{prop}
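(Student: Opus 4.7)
The plan is to exploit the definition of $\sim_P$ as the ``intersection'' of all property-$(P)$ equivalence relations, and apply it to the specific relation furnished by Proposition \ref{pr-pp}.

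More precisely, let $\sim_0$ denote the equivalence relation on $X$ given by $x \sim_0 y \Longleftrightarrow \pi_X(x)=\pi_X(y)$. By Proposition \ref{pr-pp}, $\sim_0$ belongs to the family $E$ of property-$(P)$ equivalence relations on $X$. Now by definition, $x \sim_P y$ means that $x \sim y$ for \emph{every} $\sim \in E$; equivalently, by Proposition \ref{pr-sfe}, $\sim_P$ is finer than every member of $E$. In particular, $\sim_P$ is finer than $\sim_0$, so $x \sim_P y$ implies $x \sim_0 y$, which is exactly the conclusion $\pi_X(x) = \pi_X(y)$.

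There is no real obstacle here: the proposition is essentially a tautological consequence of how $\sim_P$ was constructed, and the only nontrivial input is Proposition \ref{pr-pp} which produces the fiber-over-$\pi_X$ relation as a member of $E$. The role of the proposition in the paper is to serve as a sanity check that passing to the quotient $X/\!\sim_P$ is still compatible with the projection $\pi_X$, so that a well-defined map $X/\!\sim_P\;\to \mathbb{H}$ can be produced in the subsequent construction of the union $\bigcup_{\lambda\in\Lambda}\mathcal{G}_\lambda$.
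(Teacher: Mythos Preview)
Your proposal is correct and follows essentially the same approach as the paper: both invoke Proposition~\ref{pr-pp} to obtain a property-$(P)$ equivalence relation given by equality of $\pi_X$, and then use that $\sim_P$ is finer than every such relation (Proposition~\ref{pr-sfe} or the definition of $\sim_P$) to conclude.
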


\begin{proof}
	We notice that $\sim_P$ is finer than each equivalence relation on $X$ with property (P). And since Proposition \ref{pr-pp}, then $\pi_X(x)=\pi_X(y)$.
\end{proof}

Thence there exists a map $\widehat{\pi}:X/\sim_P\rightarrow\mathbb{H}$ such that
\begin{equation}\label{eq-ps}
\widehat{\pi}\circ\phi=\pi_X.
\end{equation}
We set
\begin{equation*}
\widehat{G}:=X/\sim_P,
\end{equation*}
and let the topology $\tau(\widehat{G})$ of $\widehat{G}$ be the quotient topology. We set
\begin{equation*}
\widehat{x}:=\varphi_{\lambda}(x_{\lambda}),\qquad\forall\lambda\in\Lambda'.
\end{equation*}

\begin{thm}\label{th-union}
	$\widehat{\mathcal{G}}:=(\widehat{G},\widehat{\pi},\widehat{x})$ is a Riemann slice-domain over $\mathbb{H}$ with distinguished point.
	
	Moreover, for each $\lambda\in\Lambda'$, we have $\mathcal{G}_{\lambda}\prec\widehat{\mathcal{G}}$, and $\phi\circ\varphi_{\lambda}$ is the fiber preserving mapping from $G_{\lambda}$ to $\widehat{G}$ with $\phi\circ\varphi_{\lambda}(x_\lambda)=\widehat{x}$.
	
	We call $\widehat{\mathcal{G}}$   the pre-union of $\{\mathcal{G}_\lambda\}_{\lambda\in\Lambda'}$.
\end{thm}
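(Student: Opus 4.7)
The plan is to verify the four properties making $\widehat{\mathcal{G}}$ a Riemann slice-domain with distinguished point (Hausdorffness of $\widehat{G}$, connectedness of $\widehat{G}$, and local slice-homeomorphism of $\widehat{\pi}$), together with the containment claim. Continuity of $\widehat{\pi}$ is immediate from $\widehat{\pi} \circ \phi = \pi_X$, the universal property of the quotient topology, and the continuity of each $\pi_\lambda$; similarly for each $\lambda \in \Lambda'$ the composition $\phi \circ \varphi_\lambda$ is continuous, satisfies $\widehat{\pi} \circ (\phi \circ \varphi_\lambda) = \pi_\lambda$, and sends $x_\lambda$ to $\widehat{x}$, which yields $\mathcal{G}_\lambda \prec \widehat{\mathcal{G}}$ with $\phi \circ \varphi_\lambda$ as the fiber-preserving map (unique by Proposition \ref{pr-ufpm}).

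To obtain the local slice-homeomorphism property I would first check that $\phi \circ \varphi_\lambda \colon G_\lambda \to \widehat{G}$ is locally injective: on any schlicht neighborhood $V$ of a point $q \in G_\lambda$, two preimages with equal image would be $\sim_P$-equivalent, hence share their $\pi_\lambda$-projection by Proposition \ref{pr-pp1} and therefore coincide. Next I would show $\phi \circ \varphi_\lambda$ is an open map by proving that for any schlicht real-connected $V \subset G_\lambda$ the $\sim_P$-saturation of $\varphi_\lambda(V)$ is open in $X$. Given $y_0 \in G_\rho$ with $\varphi_\rho(y_0) \sim_P \varphi_\lambda(v_0)$ for some $v_0 \in V$, I would use Proposition \ref{pr-rc1} to pick schlicht real-connected neighborhoods $V_0 \ni v_0$ in $G_\lambda$ and $V_0' \ni y_0$ in $G_\rho$ sharing a common slice-homeomorphic image $W_0 \subset \pi_\lambda(V)$; for every $y \in V_0'$, letting $v$ denote the unique point of $V_0$ with $\pi_\lambda(v) = \pi_\rho(y)$, a composition of slice preserving paths in $W_0$ from $\pi_\rho(y)$ to $\pi_\lambda(v_0)$ supplied by Proposition \ref{pr-rc2}(d) lifts to continuous paths in $V_0$ and $V_0'$ starting at $v$ and $y$ and ending at $v_0$ and $y_0$, respectively; property $(P)$ then yields $v \sim_P y$, so $\varphi_\rho(V_0')$ lies in the saturation. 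Injectivity and openness together imply that $\phi \circ \varphi_\lambda$ restricted to any schlicht $V$ is a homeomorphism onto an open subset $U$ of $\widehat{G}$, whence $\widehat{\pi}|_U = \pi_\lambda|_V \circ (\phi \circ \varphi_\lambda|_V)^{-1}$ is a slice-homeomorphism onto $\pi_\lambda(V)$.

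The Hausdorff property is the main obstacle. Given distinct classes $[x]_P \neq [y]_P$ in $\widehat{G}$, the easy case $\widehat{\pi}([x]_P) \neq \widehat{\pi}([y]_P)$ uses Hausdorffness of $(\mathbb{H}, \tau_s)$ (Proposition \ref{pr-st}) together with continuity of $\widehat{\pi}$. The essential case is $\widehat{\pi}([x]_P) = \widehat{\pi}([y]_P) = q$: after picking representatives $x \in G_\lambda$ and $y \in G_\rho$, I would choose schlicht real-connected neighborhoods $U_x \ni x$ and $U_y \ni y$ whose slice-homeomorphic images coincide with a single real-connected slice-domain $W \ni q$ in $\mathbb{H}$. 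The claim is that $\phi(\varphi_\lambda(U_x))$ and $\phi(\varphi_\rho(U_y))$ are disjoint; otherwise there would exist $v_1 \in U_x$ and $v_2 \in U_y$ with $v_1 \sim_P v_2$, forcing $\pi_\lambda(v_1) = \pi_\rho(v_2) \in W$, and lifting a composition of slice preserving paths in $W$ from this common projection to $q$ (Proposition \ref{pr-rc2}(d)) would produce continuous paths in $U_x$ and $U_y$ ending at $x$ and $y$ respectively whose projections to $\mathbb{H}$ coincide; property $(P)$ of $\sim_P$ would then give $x \sim_P y$, contradicting $[x]_P \neq [y]_P$. Openness of the two images from the previous paragraph then completes the Hausdorff verification.

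Finally, connectedness of $\widehat{G}$ is straightforward: $\widehat{G} = \bigcup_{\lambda \in \Lambda'} (\phi \circ \varphi_\lambda)(G_\lambda)$ is a union of continuous images of the connected spaces $G_\lambda$, all of which contain the common point $\widehat{x}$.
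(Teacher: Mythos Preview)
Your proposal is correct and follows essentially the same route as the paper: local injectivity of $\phi\circ\varphi_\lambda$ via Proposition \ref{pr-pp1}, openness by showing $\sim_P$-saturations of schlicht neighborhoods are open using property $(P)$, Hausdorffness by a case split on whether the $\widehat{\pi}$-projections agree, and connectedness from the union of connected images through $\widehat{x}$. The one substantive variation is in the Hausdorff case $\widehat{\pi}([x]_P)=\widehat{\pi}([y]_P)$: the paper chooses neighborhoods $U,V$ already in $\widehat{G}$ and invokes Proposition \ref{pr-ul} for the maps $\widehat{\pi}|_U^{-1},\widehat{\pi}|_V^{-1}\colon W\to\widehat{G}$, whereas you lift a path in $W$ back to representatives in $G_\lambda,G_\rho$ and apply property $(P)$ of $\sim_P$ directly---this sidesteps the apparent circularity of invoking Proposition \ref{pr-ul} on $\widehat{G}$ before its Hausdorffness has been established.
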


\begin{proof}
	We prove this theorem exactly as \cite[Theorem 8.7]{Fritzsche2002001B}.
	
	(1) For each $p\in\widehat{G}$, there exist $q\in\phi^{-1}(p)$ and $\lambda\in\Lambda'$ with $q\in G_{\lambda}$. For each $\lambda\in\Lambda$, since $G_{\lambda}$ is connected, there exists a path $\gamma$ in $G_{\lambda}$ from $x_{\lambda}$ to $q$. Notice that $\tau(\widehat{G})$ is the quotient topology, and since $\phi$ is continuous, it follows that $\phi\circ\gamma$ is a path in $\widehat{G}$ from $\widehat{x}$ to $q$. Then $\widehat{G}$ is path-connected. It is clear that $\widehat{G}$ is connected.
	
	(2) For each $p\in\widehat{G}$, $q\in\phi^{-1}(p)$ and $\lambda\in\Lambda'$ with $q\in G_{\lambda}$, let $U$ be a domain in $G_{\lambda}$ containing $q$, such that $\pi_{\lambda}|_U:U\rightarrow\pi_X(U)$ is a slice-homeomorphism. We notice that Proposition \ref{pr-pp1} and $\pi_{\lambda}|_U$ is a slice-homeomorphic, then we have
	\begin{equation*}
	(\phi\circ\varphi_{\lambda})|_U:U\rightarrow\phi\circ\varphi_{\lambda}(U)
	\end{equation*}
	is injective. Since $\tau(\widehat{G})$ is the quotient topology, it follows that $\phi\circ\varphi|_U$ is continuous. Obviously $\phi\circ\varphi|_U$ is surjective, it is clear that $\phi\circ\varphi|_U$ is a continuous bijection. We will prove that $(\phi\circ\varphi_{\lambda})|_U^{-1}$ is also continuous.
	
	For each domain $V$ contained in $U$, we set
	\begin{equation*}
	V':=\phi\circ\varphi_{\lambda}(V).
	\end{equation*}
	For each $\rho\in\Lambda'$ and $y\in\phi^{-1}(V')\cap G_{\rho}$, there exists a domain $W$ in $G_{\rho}$ containing $y$, such that $\pi_{\rho}|_W:W\rightarrow\pi_X(W)$ is a slice-homeomorphism with $\pi_X(W)\subset\pi_X(V)$. We denote by $y'$ the unique element in $\varphi_{\lambda}^{-1}(\phi|_U^{-1}(\phi(y)))=\phi|_U^{-1}(\phi(y))$. Since
	\begin{equation*}
	\phi(y')=\phi(\phi|_U^{-1}(\phi(y)))=\phi\circ\phi|^{-1}_U(\phi(y))=\phi(y),
	\end{equation*}
	we have $y\sim_P y'$. Since $\pi_X(W)$ is a slice-domain in $\mathbb{H}$, and due to Proposition \ref{pr-lp}, it follows that for each $z\in\pi_X(W)$, there exists a slice-path $\gamma$ in $\pi_X(W)$ from $\pi_X(y)=\pi_X(y')$ to $z$. According to the property $(P)$ of $\widehat{G}$, and since $\pi_\rho,\pi_\lambda$ are local slice-homeomorphic and $\pi_X(W)\subset\pi_X(V)$, we have
	\begin{equation*}
	\pi_{\rho}|_W^{-1}(z)\sim_P\pi_{\lambda}|_U^{-1}(z).
	\end{equation*}
	Thus, $W\subset\phi^{-1}(V')\cap G_{\rho}$. Note that $W$ is open in $X$, then $y$ is an interior point of $\phi^{-1}(V')$. Since $y\in\phi^{-1}(V')\cap G_{\rho}$ is chosen arbitrarily, then each point in $\phi^{-1}(V')\cap G_{\rho}$ is an interior point. Thus $\phi^{-1}(V')\cap G_{\rho}$ is open in $X$ for each $\rho\in\Lambda'$. It follows that $\phi^{-1}(V')$ is open in $X$. Therefore $V'$ is open in $\widehat{G}$. It is clear that
	\begin{equation*}
	((\phi\circ\varphi_{\lambda})|_U^{-1})^{-1}(V)=V'
	\end{equation*}
	is open in $\widehat{G}$ for each $V\in\tau(U)$. We conclude that $(\phi\circ\varphi_{\lambda})|_U^{-1}$ is continuous. Consequently, $(\phi\circ\varphi_{\lambda})|_U$ is a homeomorphism, and then
	\begin{equation*}
	\widehat{\pi}|_{\phi\circ\varphi_{\lambda}(U)}=\pi_X|_U\circ(\phi\circ\varphi_{\lambda})|_U^{-1}:\phi\circ\varphi_{\lambda}(U)\rightarrow\pi_X(U)
	\end{equation*}
	is a slice-homeomorphism. Thence $\widehat{\pi}$ is a local slice-homeomorphism.
	
	(3) For each $p,q\in\widehat{G}$, if $\widehat{\pi}(p)\neq\widehat{\pi}(q)$, there exist a slice-domain $U$ in $\mathbb{H}$ containing $\widehat{\pi}(p)$ and a slice-domain $V$ in $\mathbb{H}$ containing $\widehat{\pi}(q)$ such that $U\cap V=\varnothing$. Since $\widehat{\pi}$ is a local slice-homeomorphism, it follows that $\widehat{\pi}^{-1}(V)$ and $\widehat{\pi}^{-1}(U)$ are two disjoint open sets in $\widehat{G}$ containing $p$ and $q$ respectively.
	
	Otherwise, $\widehat{\pi}(p)=\widehat{\pi}(q)$, there exist a domain $U$ in $\widehat{G}$ containing $p$, a domain $V$ in $\widehat{G}$ containing $q$ and a slice-domain $W$ in $\mathbb{H}$, such that $\widehat{\pi}|_U:U\rightarrow W$ and $\widehat{\pi}|_V:V\rightarrow W$ are slice-homeomorphisms. If there exists $y\in U\cap V$, then $\widehat{\pi}|_U^{-1}$ and $\widehat{\pi}|_V^{-1}$ are two continuous mappings from $W$ to $\widehat{G}$. Notice that
	\begin{equation*}
	\widehat{\pi}|_U^{-1}(\widehat{\pi}(y))=y=\widehat{\pi}|_V^{-1}(\widehat{\pi}(y)),\qquad
	\widehat{\pi}\circ\widehat{\pi}|_U^{-1}=id_W=\widehat{\pi}\circ\widehat{\pi}|_V^{-1},
	\end{equation*}
	and Proposition \ref{pr-ul}, it follows that $\widehat{\pi}|_U^{-1}=\widehat{\pi}|_V^{-1}$. Consequently, $p=q$, which is a contradiction. It implies that $U\cap V=\varnothing$. Then $\widehat{G}$ is Hausdorff, and then we see from (1) and (2) that $\widehat{\mathcal{G}}$ is a slice-domain over $\mathbb{H}$ with distinguished point.
	
	(4) For each $\lambda\in\Lambda'$, since $\widehat{\pi}\circ\phi=\pi_X$ (see (\ref{eq-ps})), it follows that
	\begin{equation*}
	\pi_{\lambda}=\pi_X\circ\varphi_{\lambda}=\widehat{\pi}\circ\phi\circ\varphi_{\lambda}.
	\end{equation*}
	It is clear that $G_{\lambda}\prec\widehat{G}$ and $\phi\circ\varphi_\lambda$ is the fiber preserving mapping from $\mathcal{G}_\lambda$ to $\mathcal{G}$.
\end{proof}

\begin{thm}\label{th-uos} 
	$[\widehat{\mathcal{G}}]$ is the set of all unions of $\{\mathcal{G}_\lambda\}_{\lambda\in\Lambda}$, i.e.,
	\begin{equation*}
	[\widehat{\mathcal{G}}]=\bigcup_{\lambda\in\Lambda}\mathcal{G}_\lambda.
	\end{equation*}
\end{thm}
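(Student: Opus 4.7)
The plan is to show two inclusions: first that $\widehat{\mathcal G}$ is actually a union of the family (i.e., every upper bound dominates it), and second that any other union must lie in $[\widehat{\mathcal G}]$. The second assertion is routine once the first is established: if $\mathcal U$ is another union then $\mathcal U\prec\widehat{\mathcal G}$ (because $\widehat{\mathcal G}$ is an upper bound) and $\widehat{\mathcal G}\prec\mathcal U$ (because $\mathcal U$ is a union while $\widehat{\mathcal G}$ is an upper bound), so $\mathcal U\cong\widehat{\mathcal G}$. Conversely, every member of $[\widehat{\mathcal G}]$ inherits the supremum property from $\widehat{\mathcal G}$ via Proposition \ref{pr-weak}. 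Also, since each $\lambda\in\Lambda$ corresponds to a unique $\rho\in\Lambda'$ with $\mathcal G_\lambda\cong\mathcal G_\rho$, the distinction between $\Lambda$ and $\Lambda'$ drops out when comparing to bounds.

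The core task is therefore: given an upper bound $\mathcal G'=(G',\pi',x')$ of $\{\mathcal G_\lambda\}_{\lambda\in\Lambda'}$, with fiber preserving maps $\psi_\lambda:G_\lambda\to G'$ satisfying $\psi_\lambda(x_\lambda)=x'$ and $\pi'\circ\psi_\lambda=\pi_\lambda$, construct a fiber preserving map $\widehat{\Psi}:\widehat G\to G'$ with $\widehat{\Psi}(\widehat x)=x'$. I would assemble the $\psi_\lambda$ into a single continuous map $\Psi:X\to G'$ with $\Psi|_{G_\lambda}=\psi_\lambda$, and define an equivalence relation $\sim_\Psi$ on $X$ by $x\sim_\Psi y\iff \Psi(x)=\Psi(y)$.

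The main technical step, and the one I expect to be the real obstacle, is verifying that $\sim_\Psi$ satisfies property $(P)$. Condition $1$ is immediate since $\Psi(x_\lambda)=x'$ for every $\lambda\in\Lambda'$. For condition $2$, given continuous paths $\alpha:[0,1]\to G_\lambda$ and $\beta:[0,1]\to G_\rho$ with $\alpha(0)\sim_\Psi\beta(0)$ and $\pi_\lambda\circ\alpha=\pi_\rho\circ\beta$, the composites $\Psi\circ\alpha$ and $\Psi\circ\beta$ are two continuous maps $[0,1]\to G'$ that agree at $t=0$ and satisfy $\pi'\circ(\Psi\circ\alpha)=\pi_\lambda\circ\alpha=\pi_\rho\circ\beta=\pi'\circ(\Psi\circ\beta)$. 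Since $[0,1]$ is connected, Proposition \ref{pr-ul} forces $\Psi\circ\alpha=\Psi\circ\beta$ on all of $[0,1]$, in particular at $t=1$, which is exactly $\alpha(1)\sim_\Psi\beta(1)$.

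With property $(P)$ in hand, Proposition \ref{pr-sfe} gives that $\sim_P$ refines $\sim_\Psi$, so $\Psi$ descends to a well-defined map $\widehat{\Psi}:\widehat G\to G'$ with $\widehat{\Psi}\circ\phi=\Psi$. Continuity of $\widehat{\Psi}$ follows from the universal property of the quotient topology on $\widehat G$ together with the continuity of $\Psi$ on each summand $G_\lambda$. Fiber preservation follows from $\pi'\circ\widehat{\Psi}\circ\phi=\pi'\circ\Psi=\pi_X=\widehat\pi\circ\phi$ and surjectivity of $\phi$, and $\widehat\Psi(\widehat x)=\widehat\Psi(\phi(x_\lambda))=\psi_\lambda(x_\lambda)=x'$. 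Thus $\widehat{\mathcal G}\prec\mathcal G'$, completing the proof that $\widehat{\mathcal G}$ is a union and hence $[\widehat{\mathcal G}]=\bigcup_{\lambda\in\Lambda}\mathcal G_\lambda$.
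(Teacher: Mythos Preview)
Your proposal is correct and follows essentially the same approach as the paper: both assemble the fiber-preserving maps into a single map $\Psi:X\to G'$ (the paper calls it $\varphi'$), verify that the induced equivalence relation on $X$ has property $(P)$ via Proposition \ref{pr-ul}, invoke Proposition \ref{pr-sfe} to conclude $\sim_P$ refines it, and descend to the quotient. Your write-up is in fact a bit more explicit than the paper's on the continuity of the descended map and on the verification of condition 2 of property $(P)$.
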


\begin{proof}
	According to Theorem \ref{th-union}, $G_{\lambda}\prec\widehat{G}$ for each $\lambda\in\Lambda'$, it follows that $\widehat{\mathcal{G}}$ is an upper bound of $\{\mathcal{G}_\lambda\}_{\lambda\in\Lambda'}$. For each $\lambda\in\Lambda$, there exist $\rho\in\Lambda'$ with $\mathcal{G}_\lambda\cong\mathcal{G}_\rho$. And according to Proposition \ref{pr-weak}, it is clear that $\mathcal{G}_\lambda\prec\mathcal{G}$ for each $\lambda\in\Lambda$. Then $\widehat{\mathcal{G}}$ is also an upper bound of $\{\mathcal{G}_\lambda\}_{\lambda\in\Lambda'}$.
	
	For each upper bound $\mathcal{G}'=(G',\pi',x')$ of $\{\mathcal{G}_\lambda\}_{\lambda\in\Lambda'}$, there exists a fiber preserving mapping $\varphi_\lambda':G_\lambda\rightarrow G'$ for each $\lambda\in\Lambda'$. Let $\cong$ be the equivalence relation on $X$, such that $x\cong y$, if and only if, there exist $\lambda,\rho\in\Lambda'$ with $x\in G_\lambda$, $y\in G_\rho$ and $\varphi_\lambda'(x)=\varphi_\rho'(y)$, for each $x,y\in X$. In view of Proposition \ref{pr-ul}, $\cong$ has the property $(P)$. Let $\varphi':X\rightarrow G'$ be the map defined by
	\begin{equation*}
	\varphi'|_{G_\lambda}:=\varphi_\lambda',\qquad\forall\lambda\in\Lambda.
	\end{equation*}
	According to Proposition \ref{pr-sfe}, $\sim_P$ is finer than $\cong$. We notice that, for each $q\in\widehat{G}$ and $x,y\in\phi^{-1}(q)$, $x\sim_P y$, then $x\cong y$ and $\varphi'(x)=\varphi'(y)$. It follows that there exists a continuous map $\varphi'':\widehat{G}\rightarrow G'$ defined by
	\begin{equation*}
	\varphi''(x):=\varphi'(\phi^{-1}(x)).
	\end{equation*}
	It is clear that $\varphi''$ is the fiber preserving mapping from $\widehat{\mathcal{G}}$ to $\mathcal{G}'$. Therefore $\widehat{\mathcal{G}}\prec\mathcal{G}'$. Then $\widehat{\mathcal{G}}$ is a union of $\{\mathcal{G}_\lambda\}_{\lambda\in\Lambda}$.
	
	If $\mathcal{G}'$ is another union of $\{\mathcal{G}_\lambda\}_{\lambda\in\Lambda}$, then we see from Definition \ref{de-sdui} that $\widehat{\mathcal{G}}\prec\mathcal{G}'$ and $\mathcal{G}'\prec\widehat{\mathcal{G}}$. Thence $\mathcal{G}'\in[\widehat{\mathcal{G}}]$. On the other hand, for each $\mathcal{G}''\in[\widehat{\mathcal{G}}]$, we have
	\begin{equation*}
	\mathcal{G}_\lambda\prec\widehat{\mathcal{G}}\prec\mathcal{G}'',\qquad\forall\lambda\in\Lambda
	\end{equation*}
	and
	\begin{equation*}
	\mathcal{G}''\prec\widehat{\mathcal{G}}\prec\mathcal{G}'''
	\end{equation*}
	for each upper bound $\mathcal{G}'''$ of $\{\mathcal{G}_\lambda\}_{\lambda\in\Lambda}$. Then $\mathcal{G}''$ is also a union of $\{\mathcal{G}_\lambda\}_{\lambda\in\Lambda}$.
\end{proof}

\begin{defn} 
	Let $\mathcal{G}=(G,\pi,x)$ be a slice-domain over $\mathbb{H}$ with distinguished point and $\gamma$ be a path in $\mathbb{H}$. We say that $\gamma$ is contained in $\mathcal{G}$ (denoted by $\gamma\prec\mathcal{G}$), if there exists a path $\alpha$ in $G$ such that $\alpha(0)=x$ and $\pi(\alpha)=\gamma$.
\end{defn}

\begin{prop}\label{pr-1gph}
	Let $\gamma$ be a path in $\mathbb{H}$ and $\mathcal{G}=(G,\pi,x)$ be a slice-domain over $\mathbb{H}$ with distinguished point. If $\gamma\prec\mathcal{G}$, there exists a unique path $\alpha$ in $G$ such that $\pi(\alpha)=\gamma$. We call $\alpha$ is the lifting of $\gamma$ to $\mathcal{G}$, denoted by $\gamma_\mathcal{G}$.
\end{prop}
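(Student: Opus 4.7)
The plan is very short: this is precisely the path-lifting uniqueness statement for the local slice-homeomorphism $\pi$, and it will follow immediately from Proposition \ref{pr-ul} applied with $X = [0,1]$.

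First I would dispose of existence, which is already built in: by the definition of $\gamma \prec \mathcal{G}$ (given just above the proposition), there is at least one continuous $\alpha : [0,1] \to G$ with $\alpha(0) = x$ and $\pi \circ \alpha = \gamma$. So the proposition's content is the uniqueness clause, and the implicit pinning condition to uniqueness is $\alpha(0) = x$ (without this, any point of $\pi^{-1}(\gamma(0))$ that admits a continuation would furnish another lifting, so the initial condition is essential).

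For uniqueness, suppose $\alpha, \beta : [0,1] \to G$ are both continuous and satisfy $\alpha(0) = \beta(0) = x$ together with $\pi \circ \alpha = \pi \circ \beta = \gamma$. The unit interval $[0,1]$ is a connected topological space, and $\alpha, \beta$ coincide at the point $0 \in [0,1]$ while satisfying $\pi \circ \alpha = \pi \circ \beta$. The hypotheses of Proposition \ref{pr-ul} are therefore met verbatim (taking $X = [0,1]$, the base point $0$, and $\psi_1 = \alpha$, $\psi_2 = \beta$), so $\alpha = \beta$.

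I do not foresee any real obstacle: the argument is a one-line invocation, and the only thing to be careful about when writing it up is to record explicitly that the lifting furnished by $\gamma \prec \mathcal{G}$ is pinned at $\alpha(0) = x$, so that the base-point hypothesis of Proposition \ref{pr-ul} is legitimately available.
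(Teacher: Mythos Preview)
Your proposal is correct and matches the paper's own proof, which simply reads ``This follows immediately from Proposition \ref{pr-ul}.'' Your write-up is a faithful (and slightly more detailed) unpacking of that one-line invocation, including the helpful remark that the initial condition $\alpha(0)=x$ is implicit in the statement and is what makes Proposition \ref{pr-ul} applicable.
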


\begin{proof}
	This follows immediately from Proposition \ref{pr-ul}.
\end{proof}

Let $\gamma$ be a path in $\mathbb{H}$. We denote by $\mathscr{R}_\gamma$ all the slice-domain over $\mathbb{H}$ with distinguished point containing $\gamma$.

\begin{prop} \label{pr-psdp} 
Let $\mathcal{G}$ is a slice-domain over $\mathbb{H}$ with distinguished point, and $\gamma$ be a path (resp. finite-part path) in $\mathbb{H}$. If $\mathcal{G}\in\mathscr{R}_\gamma$, then $\mathcal{G}'\in\mathscr{R}_\gamma$ for each $\mathcal{G}'\in\mathscr{R}$ with $\mathcal{G}'\succ\mathcal{G}$. In particular, $[\mathcal{G}]\subset\mathscr{R}_\gamma$.
\end{prop}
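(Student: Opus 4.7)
The plan is to push the lifting of $\gamma$ forward through the fiber-preserving map that witnesses $\mathcal{G}\prec\mathcal{G}'$. Concretely, write $\mathcal{G}=(G,\pi,x)$ and $\mathcal{G}'=(G',\pi',x')$, and let $\varphi\colon G\to G'$ be the (unique, by Proposition \ref{pr-ufpm}) fiber-preserving map with $\varphi(x)=x'$ and $\pi'\circ\varphi=\pi$.

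First I treat the case when $\gamma$ is an ordinary path in $\mathbb{H}$. Since $\mathcal{G}\in\mathscr{R}_\gamma$, Proposition \ref{pr-1gph} yields a unique continuous lifting $\gamma_\mathcal{G}\colon[0,1]\to G$ with $\gamma_\mathcal{G}(0)=x$ and $\pi\circ\gamma_\mathcal{G}=\gamma$. Define $\alpha:=\varphi\circ\gamma_\mathcal{G}\colon[0,1]\to G'$. Then $\alpha$ is continuous, $\alpha(0)=\varphi(x)=x'$, and
\begin{equation*}
\pi'\circ\alpha=\pi'\circ\varphi\circ\gamma_\mathcal{G}=\pi\circ\gamma_\mathcal{G}=\gamma.
\end{equation*}
So $\alpha$ is a lifting of $\gamma$ to $\mathcal{G}'$, hence $\gamma\prec\mathcal{G}'$ and $\mathcal{G}'\in\mathscr{R}_\gamma$.

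For the finite-part case $\gamma=(\gamma_1,\dots,\gamma_N)$, let $\gamma_\mathcal{G}=(\alpha_1,\dots,\alpha_N)$ be the lifting provided by Proposition \ref{def:notation-path}, so each $\alpha_\imath$ is a slice preserving path in $G$, $\alpha_\imath(1)=\alpha_{\imath+1}(0)\in G_\mathbb{R}$, and $\pi\circ\alpha_\imath=\gamma_\imath$. Set $\beta_\imath:=\varphi\circ\alpha_\imath$ for each $\imath$. Because $\pi'\circ\beta_\imath=\pi\circ\alpha_\imath=\gamma_\imath$ is a slice preserving path in $\mathbb{H}$, $\beta_\imath$ is slice preserving in $G'$. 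Moreover $\pi'(\varphi(\alpha_\imath(1)))=\pi(\alpha_\imath(1))\in\mathbb{R}$, so $\beta_\imath(1)=\varphi(\alpha_\imath(1))\in G'_\mathbb{R}$, and continuity of $\varphi$ gives $\beta_\imath(1)=\beta_{\imath+1}(0)$. Hence $(\beta_1,\dots,\beta_N)$ is an $N$-part path in $G'$ with $\beta_1(0)=\varphi(x)=x'$ that projects to $\gamma$, so $\gamma\prec\mathcal{G}'$.

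Finally, for any $\mathcal{G}'\in[\mathcal{G}]$ we have $\mathcal{G}\prec\mathcal{G}'$ by definition of the equivalence class, so the first part gives $[\mathcal{G}]\subset\mathscr{R}_\gamma$. There is no real obstacle here: the argument is a one-line naturality check that lifting is preserved by fiber-preserving morphisms; the only mild bookkeeping item is verifying that the concatenation points in the finite-part case still land in $G'_\mathbb{R}$, which is immediate from $\pi'\circ\varphi=\pi$.
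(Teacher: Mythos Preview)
Your proof is correct and follows essentially the same approach as the paper: push the lifting $\gamma_{\mathcal{G}}$ forward via the fiber-preserving map $\varphi$ and observe that $\varphi\circ\gamma_{\mathcal{G}}$ is the required lifting in $\mathcal{G}'$. Your treatment is in fact more detailed than the paper's, which dispatches both cases in a single line without separately verifying the finite-part bookkeeping.
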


\begin{proof}
We write $\mathcal{G}=(G,\pi,x)$ and $\mathcal{G}'=(G',\pi',x')$. Let $\varphi:G\rightarrow G'$ be the fiber preserving map from $\mathcal{G}$ to $\mathcal{G}'$, and $\alpha$ be the lifting of $\gamma$ to $\mathcal{G}$. Then $\varphi\circ\alpha$ is the lifting of $\gamma$ to $\mathcal{G}'$.
\end{proof}

For each path $\gamma$ in $\mathbb{H}$, we set a map
\begin{equation*}
\mathcal{T}_\gamma:\mathscr{R}_\gamma\rightarrow\mathscr{R},\qquad (G,\pi,x)\mapsto (G,\pi,\gamma_{\mathcal{G}}(1)),\quad\forall\ (G,\pi,x)\in\mathscr{R}_\gamma.
\end{equation*}
Let $\gamma^{(-1)}$ be the inverse path of $\gamma$, i.e.,
\begin{equation*}
\gamma^{(-1)}(t)=\gamma(1-t),\qquad\forall\ t\in[0,1].
\end{equation*}
According to Proposition \ref{pr-ul}, we have
\begin{equation*}
\mathcal{T}_{\gamma^{(-1)}}\circ \mathcal{T}_\gamma=id_{\mathscr{R}_\gamma},
\end{equation*}
where $id_{\mathscr{R}_\gamma}$ is the identity map on $\mathscr{R}_\gamma$. Then
\begin{equation*}
\mathcal{T}_\gamma^{-1}=\mathcal{T}_{\gamma^{(-1)}}:\mathscr{R}_{\gamma^{(-1)}}\rightarrow\mathscr{R}.
\end{equation*}

\begin{prop} \label{pr-sdt} 
	Let $\gamma$ be a path in $\mathbb{H}$, $\Lambda$ be an index set, and $\mathcal{G},\mathcal{G}_\lambda\in\mathscr{R}_\gamma$ for each $\lambda\in\Lambda$. The following statements hold:
	
	(a). If $\lambda,\rho\in\Lambda$ and $\mathcal{G}_\lambda\prec\mathcal{G}_\rho$, then $\mathcal{T}_\gamma(\mathcal{G}_\lambda)\prec\mathcal{T}_\gamma(\mathcal{G}_\rho)$.
	
	(b). If $\mathcal{G}$ is an upper (resp. lower) bound of $\{\mathcal{G}_\lambda\}_{\lambda\in\Lambda}$, then $\mathcal{T}_\gamma(\mathcal{G})$ is an upper (resp. lower) bound of $\mathcal{T}_\gamma(\{\mathcal{G}_\lambda\}_{\lambda\in\Lambda})$, where
	\begin{equation*}
	\mathcal{T}_\gamma(\{\mathcal{G}_\lambda\}_{\lambda\in\Lambda}):=\{\mathcal{T}_\gamma(\mathcal{L}):\mathcal{L}\in \{\mathcal{G}_\lambda\}_{\lambda\in\Lambda}\}.
	\end{equation*}
	
	(c). If $\mathcal{G}$ is a supremum (resp. infimum) of $\{\mathcal{G}_\lambda\}_{\lambda\in\Lambda}$, then $\mathcal{T}_\gamma(\mathcal{G})$ is a supremum (resp. infimum) of $\mathcal{T}_\gamma(\{\mathcal{G}_\lambda\}_{\lambda\in\Lambda})$.
	
	(d). $\mathcal{T}_\gamma$ commutes with the union of slice-domains over $\mathbb{H}$ with distinguished point, i.e., $\cup_{\lambda\in\Lambda}\mathcal{T}_\gamma(\mathcal{G}_\lambda)=\mathcal{T}_\gamma(\cup_{\lambda\in\Lambda}\mathcal{G}_\lambda)$.
\end{prop}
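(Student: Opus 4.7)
The plan is to prove (a) directly from the uniqueness of liftings, deduce (b) as a formal consequence, handle (c) by exploiting the involution $\mathcal{T}_\gamma^{-1}=\mathcal{T}_{\gamma^{(-1)}}$, and obtain (d) from (c) together with Theorem \ref{th-uos}.

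For (a), write $\mathcal{G}_\lambda=(G_\lambda,\pi_\lambda,x_\lambda)$ and $\mathcal{G}_\rho=(G_\rho,\pi_\rho,x_\rho)$, and let $\varphi:G_\lambda\to G_\rho$ be the fiber preserving map from $\mathcal{G}_\lambda$ to $\mathcal{G}_\rho$. Consider the path $\varphi\circ\gamma_{\mathcal{G}_\lambda}$ in $G_\rho$. It starts at $\varphi(x_\lambda)=x_\rho$, and satisfies $\pi_\rho\circ(\varphi\circ\gamma_{\mathcal{G}_\lambda})=\pi_\lambda\circ\gamma_{\mathcal{G}_\lambda}=\gamma$; hence by Proposition \ref{pr-1gph} it must coincide with $\gamma_{\mathcal{G}_\rho}$. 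Evaluating at $t=1$ gives $\varphi(\gamma_{\mathcal{G}_\lambda}(1))=\gamma_{\mathcal{G}_\rho}(1)$, so the same map $\varphi$ is fiber preserving from $\mathcal{T}_\gamma(\mathcal{G}_\lambda)$ to $\mathcal{T}_\gamma(\mathcal{G}_\rho)$. Statement (b) then follows at once: if $\mathcal{G}$ is an upper (resp.\ lower) bound, apply (a) to each pair $\mathcal{G}_\lambda\prec\mathcal{G}$ (resp.\ $\mathcal{G}\prec\mathcal{G}_\lambda$) to get $\mathcal{T}_\gamma(\mathcal{G}_\lambda)\prec\mathcal{T}_\gamma(\mathcal{G})$ (resp.\ the reverse) for every $\lambda$.

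For (c), the key point is that $\mathcal{T}_\gamma$ and $\mathcal{T}_{\gamma^{(-1)}}$ are mutually inverse wherever they are defined. Assume $\mathcal{G}$ is a supremum of $\{\mathcal{G}_\lambda\}_{\lambda\in\Lambda}$; by (b), $\mathcal{T}_\gamma(\mathcal{G})$ is an upper bound of $\mathcal{T}_\gamma(\{\mathcal{G}_\lambda\})$. Let $\mathcal{H}$ be any other upper bound. Since $\mathcal{T}_\gamma(\mathcal{G}_\lambda)\prec\mathcal{H}$ for some $\lambda$, Proposition \ref{pr-psdp} applied to the path $\gamma^{(-1)}$ (which is contained in $\mathcal{T}_\gamma(\mathcal{G}_\lambda)$ as the reverse of the lifting of $\gamma$) shows $\mathcal{H}\in\mathscr{R}_{\gamma^{(-1)}}$, so that $\mathcal{T}_{\gamma^{(-1)}}(\mathcal{H})$ is defined. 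Applying (b) to $\gamma^{(-1)}$, the slice-domain $\mathcal{T}_{\gamma^{(-1)}}(\mathcal{H})$ is an upper bound of $\mathcal{T}_{\gamma^{(-1)}}(\mathcal{T}_\gamma(\{\mathcal{G}_\lambda\}))=\{\mathcal{G}_\lambda\}$. By the supremum property of $\mathcal{G}$ we get $\mathcal{G}\prec\mathcal{T}_{\gamma^{(-1)}}(\mathcal{H})$, and then (a) yields $\mathcal{T}_\gamma(\mathcal{G})\prec\mathcal{T}_\gamma(\mathcal{T}_{\gamma^{(-1)}}(\mathcal{H}))=\mathcal{H}$, as required. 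The infimum case is completely analogous, with the direction of every $\prec$ reversed.

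Finally (d) is a direct consequence of (c) and Theorem \ref{th-uos}: the union $\bigcup_{\lambda}\mathcal{G}_\lambda$ is by Definition \ref{de-sdui} the equivalence class of suprema of $\{\mathcal{G}_\lambda\}$, and (a) shows $\mathcal{T}_\gamma$ sends equivalent slice-domains to equivalent ones, while (c) shows it sends suprema to suprema. Hence $\mathcal{T}_\gamma(\bigcup_{\lambda}\mathcal{G}_\lambda)$ is the equivalence class of suprema of $\mathcal{T}_\gamma(\{\mathcal{G}_\lambda\})$, which is exactly $\bigcup_{\lambda}\mathcal{T}_\gamma(\mathcal{G}_\lambda)$. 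The only nontrivial step is (c), and within it the only place one has to be careful is verifying that each upper bound $\mathcal{H}$ lies in $\mathscr{R}_{\gamma^{(-1)}}$ so that the inverse transport is legitimate; this is the point where Proposition \ref{pr-psdp} is essential.
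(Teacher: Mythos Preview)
Your proof is correct and follows essentially the same route as the paper: (a) via uniqueness of liftings, (b) as an immediate corollary, (c) by transporting bounds back through $\mathcal{T}_{\gamma^{(-1)}}$, and (d) from (c) together with the characterization of unions as suprema. If anything you are slightly more careful than the paper in (c), where you explicitly check via Proposition~\ref{pr-psdp} that an arbitrary upper bound $\mathcal{H}$ lies in $\mathscr{R}_{\gamma^{(-1)}}$ before applying $\mathcal{T}_{\gamma^{(-1)}}$; the paper takes this for granted.
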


\begin{proof}
	(a). Let $\varphi$ be the fiber preserving map from $\mathcal{G}_\lambda$ to $\mathcal{G}_\rho$. Notice that
	\begin{equation*}
	\pi_\rho\circ\varphi\circ\gamma_{\mathcal{G}_\lambda}=\pi_\lambda\circ\gamma_{\mathcal{G}_\lambda}=\gamma=\pi_\rho\circ\gamma_{\mathcal{G}_\rho},
	\end{equation*}
	and $\varphi(\gamma_{\mathcal{G}_\lambda}(0))=\gamma_{\mathcal{G}_\rho}(0)$, it follows from Proposition \ref{pr-ul} that
	\begin{equation*}
	\varphi\circ\gamma_{\mathcal{G}_\lambda}=\gamma_{\mathcal{G}_\rho}\qquad\mbox{and}\qquad\varphi(\gamma_{\mathcal{G}_\lambda}(1))=\gamma_{\mathcal{G}_\rho}(1).
	\end{equation*}
	Therefore $\varphi$ is also the fiber preserving map from $\mathcal{G}_\lambda$ to $\mathcal{G}_\rho$.
	
	(b). According to (a), $\mathcal{T}_\gamma(\mathcal{G})\succ\mathcal{T}_\gamma(\mathcal{G}_\lambda)$ (resp. $\mathcal{T}_\gamma(\mathcal{G})\prec\mathcal{T}_\gamma(\mathcal{G}_\lambda)$) for each $\lambda\in\Lambda$.
	
	(c). According to (b), $\mathcal{T}_\gamma(\mathcal{G})$ is an upper (resp. lower) bound of $\mathcal{T}_\gamma(\{\mathcal{G}_\lambda\}_{\lambda\in\Lambda})$. For each upper (resp. lower) bound $\mathcal{G}'$ of $\mathcal{T}_\gamma(\{\mathcal{G}_\lambda\}_{\lambda\in\Lambda})$, $\mathcal{T}_{\gamma^{(-1)}}(\mathcal{G}')$ is a upper (resp. lower) bound of $\{\mathcal{G}_\lambda\}_{\lambda\in\Lambda}$, by (b). Then
	\begin{equation*}
	\mathcal{G}\prec\mathcal{T}_{\gamma^{(-1)}}(\mathcal{G}')\qquad(\mbox{resp.}\ \mathcal{G}\succ\mathcal{T}_{\gamma^{(-1)}}(\mathcal{G}'))
	\end{equation*}
	for each upper (resp. lower) bound $\mathcal{G}'$ of $\mathcal{T}_\gamma(\{\mathcal{G}_\lambda\}_{\lambda\in\Lambda})$. And thanks to (a), we have
	\begin{equation*}
	\mathcal{T}_\gamma(\mathcal{G})\prec\mathcal{G}'\qquad(\mbox{resp.}\ \mathcal{T}_\gamma(\mathcal{G})\succ\mathcal{G}')
	\end{equation*}
	for each upper (resp. lower) bound $\mathcal{G}'$ of $\mathcal{T}_\gamma(\{\mathcal{G}_\lambda\}_{\lambda\in\Lambda})$. It follows that $\mathcal{T}_\gamma(\mathcal{G})$ is a supremum (resp. infimum) of $\mathcal{T}_\gamma(\{\mathcal{G}_\lambda\}_{\lambda\in\Lambda})$.
	
	(d). For each $\mathcal{G}\in\cup_{\lambda\in\Lambda}\mathcal{T}_\gamma(\mathcal{G}_\lambda)$, according to (c), $\mathcal{T}_{\gamma^{(-1)}}(\mathcal{G})$ is a supremum of
	\begin{equation*}
	\mathcal{T}_{\gamma^{(-1)}}(\{\mathcal{T}_\gamma(\mathcal{G}_\lambda)\}_{\lambda\in\Lambda})=\{\mathcal{G}_\lambda\}_{\lambda\in\Lambda}.
	\end{equation*}
	It follows that $\mathcal{G}\in\mathcal{T}_\gamma(\cup_{\lambda\in\Lambda}\mathcal{G}_\lambda)$.
	
	Conversely, for each $\mathcal{G}\in\mathcal{T}_\gamma(\cup_{\lambda\in\Lambda}\mathcal{G}_\lambda)$, we have $\mathcal{T}_{\gamma^{(-1)}}(\mathcal{G})$ is a supremum of $\{\mathcal{G}_\lambda\}_{\lambda\in\Lambda}$. And by (c), $\mathcal{G}\in\cup_{\lambda\in\Lambda}\mathcal{T}_\gamma(\mathcal{G}_\lambda)$.
\end{proof}

\begin{prop}\label{pr-tsp}
	Let $\alpha$, $\beta$ be paths in $\mathbb{H}$, and $\mathcal{G}\in\mathscr{R}_\alpha$. If $\mathcal{T}_\alpha(\mathcal{G})\in\mathscr{R}_\beta$, then $\mathcal{G}\in\mathscr{R}_{\alpha\beta}$.
\end{prop}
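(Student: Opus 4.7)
The plan is to construct the lifting of the concatenated path $\alpha\beta$ to $\mathcal{G}$ by splicing together the liftings of $\alpha$ and $\beta$ provided by the two hypotheses.

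Write $\mathcal{G}=(G,\pi,x)$. Since $\mathcal{G}\in\mathscr{R}_\alpha$, Proposition \ref{pr-1gph} gives a unique lifting $\alpha_\mathcal{G}$ of $\alpha$ to $\mathcal{G}$ with $\alpha_\mathcal{G}(0)=x$. By the definition of $\mathcal{T}_\alpha$, we have $\mathcal{T}_\alpha(\mathcal{G})=(G,\pi,\alpha_\mathcal{G}(1))$. The second hypothesis $\mathcal{T}_\alpha(\mathcal{G})\in\mathscr{R}_\beta$ then yields, again by Proposition \ref{pr-1gph}, a unique lifting $\beta_{\mathcal{T}_\alpha(\mathcal{G})}$ of $\beta$ with $\beta_{\mathcal{T}_\alpha(\mathcal{G})}(0)=\alpha_\mathcal{G}(1)$. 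Note that the composed path $\alpha\beta$ in $\mathbb{H}$ is well defined since $\beta(0)=\pi(\beta_{\mathcal{T}_\alpha(\mathcal{G})}(0))=\pi(\alpha_\mathcal{G}(1))=\alpha(1)$.

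Next, I set
\begin{equation*}
\eta:=\alpha_\mathcal{G}\cdot\beta_{\mathcal{T}_\alpha(\mathcal{G})},
\end{equation*}
the concatenation of the two lifted paths in $G$. This is continuous on $[0,1]$ because the terminal point of $\alpha_\mathcal{G}$ equals the initial point of $\beta_{\mathcal{T}_\alpha(\mathcal{G})}$, so $\eta$ is a bona fide path in $G$. It is immediate that $\eta(0)=\alpha_\mathcal{G}(0)=x$ and, by the definition of composition of paths and the projection identities $\pi\circ\alpha_\mathcal{G}=\alpha$ and $\pi\circ\beta_{\mathcal{T}_\alpha(\mathcal{G})}=\beta$, that $\pi\circ\eta=\alpha\beta$. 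Hence $\alpha\beta\prec\mathcal{G}$, which is exactly $\mathcal{G}\in\mathscr{R}_{\alpha\beta}$.

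There is no real obstacle: the construction is essentially a direct unpacking of the definitions of $\mathscr{R}_\gamma$, $\mathcal{T}_\gamma$, and path composition. The only point requiring the slightest attention is that the matching condition $\alpha_\mathcal{G}(1)=\beta_{\mathcal{T}_\alpha(\mathcal{G})}(0)$ is built into the choice of distinguished point for $\mathcal{T}_\alpha(\mathcal{G})$, which is precisely what makes the splicing legal. Uniqueness of the resulting lifting (though not asked for) would follow from Proposition \ref{pr-ul}.
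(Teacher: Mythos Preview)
Your proof is correct and follows exactly the same approach as the paper: concatenate the liftings $\alpha_\mathcal{G}$ and $\beta_{\mathcal{T}_\alpha(\mathcal{G})}$ to obtain a path in $G$ starting at $x$ that projects to $\alpha\beta$. The paper's proof is simply a terser version of what you wrote.
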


\begin{proof}
	We write $\mathcal{G}=(G,\pi,x)$. Note that $\alpha_\mathcal{G}\beta_{\mathcal{T}_\alpha(\mathcal{G})}$ is a path in $G$ with
	\begin{equation*}
	\alpha_\mathcal{G}\beta_{\mathcal{T}_\alpha(\mathcal{G})}(0)=x\qquad\mbox{and}\qquad\pi(\alpha_\mathcal{G}\beta_{\mathcal{T}_\alpha(\mathcal{G})})=\alpha\beta.
	\end{equation*}
	It is clear that $\mathcal{G}\in\mathscr{R}_{\alpha\beta}$.
\end{proof}

\section{Envelopes of slice regularity}\label{sc-esh}

In this section, we define envelopes of slice regularity following the complex case (see \cite[Pages 96-100]{Fritzsche2002001B}). We prove an identity principle of slice regular functions on Riemann slice-domains (see Theorem \ref{th-idh}).

\begin{defn}  \label{def:slicefunction}
	Let $(G,\pi)$ be a slice-domain over $\mathbb{H}$. A function $f:G\rightarrow\mathbb{H}$ is called slice regular at a point $x\in G$ if there exists an open neighborhood $U\subset G$ of $x$ and a slice-open set $V\subset\mathbb{H}$ such that $\pi|_U:U\rightarrow V$ is a slice-homeomorphism and $f\circ\pi|_U^{-1}:V\rightarrow\mathbb{H}$ is slice regular.
	
	The function $f$ is called slice regular on $G$ if $f$ is slice regular at every point $x\in G$. We denote the set of all slice regular functions on $G$ by $\mathcal{SR}(G)$.
\end{defn}

\begin{defn}\label{def:Riemanndom}
	For any  $I\in\mathbb{S}$, a (Riemann) domain over $\mathbb{C}_I$ is a pair $(G,\pi)$ with the following properties:
	
	1. $G$ is a connected Hausdorff space.
	
	2. $\pi:G\rightarrow\mathbb{C}_I$ is a local homeomorphism.
	
	For each $x\in G$, $(G,\pi,x)$ is called a (Riemann) domain over $\mathbb{C}_I$ with distinguished point.
\end{defn}

\begin{defn}
	Let $I\in\mathbb{S}$ and $(G,\pi)$ be a  (Riemann) domain over $\mathbb{C}_I$. A function $f:G\rightarrow\mathbb{H}$ is called holomorphic at a point $x\in G$ if there are open neighborhoods
	\begin{equation*}
	U=U(x)\subset G\qquad\mbox{and}\qquad V=V(\pi(x))\subset\mathbb{C}_I,
	\end{equation*}
	such that $\pi|_U:U\rightarrow V$ is a homeomorphism and $f\circ\pi|_U^{-1}:V\rightarrow\mathbb{H}$ is holomorphic.
	
	The function $f$ is called holomorphic on $G$ if $f$ is holomorphic at every point $x\in G$.
\end{defn}

\begin{defn}
	Let $I\in\mathbb{S}$, $\mathcal{G}=(G,\pi,x)$ be a domain over $\mathbb{C}_I$ with distinguished point and $\gamma$ is a path in $\mathbb{C}_I$. We say that $\gamma$ is contained in $\mathcal{G}$ (denoted by $\gamma\prec\mathcal{G}$), if there exists a path $\alpha$ in $G$ with
	\begin{equation*}
	\alpha(0)=x\qquad\mbox{and}\qquad\pi(\alpha)=\gamma.
	\end{equation*}
\end{defn}

\begin{prop}\label{pt-ipsd}
	Let $I\in\mathbb{S}$, $\gamma$ be a path in $\mathbb{C}_I$, and $\mathcal{G}=(G,\pi,x)$ be a domain over $\mathbb{C}_I$ with distinguished point. If $\gamma\prec\mathcal{G}$, there exists a unique path $\alpha$ in $G$ with
	\begin{equation*}
	\alpha(0)=x\qquad\mbox{and}\qquad\pi(\alpha)=\gamma.
	\end{equation*}
	We say that $\alpha$ the lifting of $\gamma$ to $\mathcal{G}$, denoted by $\gamma_\mathcal{G}$.
\end{prop}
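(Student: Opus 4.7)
The plan is to follow verbatim the proof of Proposition \ref{pr-ul}, adapting it from the slice-topological setting over $\mathbb{H}$ to the Euclidean setting over $\mathbb{C}_I$. The existence of a lifting $\alpha$ with $\alpha(0)=x$ and $\pi(\alpha)=\gamma$ is already furnished by the hypothesis $\gamma\prec\mathcal{G}$, so only uniqueness requires argument.

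Suppose $\alpha_1,\alpha_2$ are two such liftings in $G$. I would consider the agreement set
\begin{equation*}
A:=\{t\in[0,1]:\alpha_1(t)=\alpha_2(t)\},
\end{equation*}
which contains $0$. First, I would verify that $A$ is closed in $[0,1]$: since $G$ is Hausdorff, the diagonal in $G\times G$ is closed, and $A$ is the preimage of this diagonal under the continuous map $(\alpha_1,\alpha_2):[0,1]\to G\times G$. Next, I would show $A$ is open: given $t\in A$ with common value $q:=\alpha_1(t)=\alpha_2(t)$, the local-homeomorphism hypothesis on $\pi$ yields an open neighborhood $U$ of $q$ in $G$ such that $\pi|_U:U\to\pi(U)$ is a homeomorphism; by continuity of $\alpha_1$ and $\alpha_2$ there is a neighborhood $W$ of $t$ in $[0,1]$ with $\alpha_\imath(W)\subset U$ for $\imath=1,2$. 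For $s\in W$, the relation $\pi\circ\alpha_1(s)=\gamma(s)=\pi\circ\alpha_2(s)$ combined with injectivity of $\pi|_U$ gives
\begin{equation*}
\alpha_1(s)=(\pi|_U)^{-1}(\gamma(s))=\alpha_2(s),
\end{equation*}
so $W\subset A$.

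Since $[0,1]$ is connected and $A$ is a nonempty clopen subset, $A=[0,1]$, which proves $\alpha_1=\alpha_2$. No real obstacle is expected here: the argument is the classical uniqueness-of-lifting argument for local homeomorphisms into Hausdorff spaces, and it mirrors the proof of Proposition \ref{pr-ul} step by step. Indeed, the only reason a separate proof is needed at all is that Proposition \ref{pr-ul} is phrased for local slice-homeomorphisms over $\mathbb{H}$ rather than local homeomorphisms over $\mathbb{C}_I$; otherwise one could simply apply it with $X=[0,1]$ and $\psi_\imath=\alpha_\imath$.
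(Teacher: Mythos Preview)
Your proposal is correct and matches the paper's approach: the paper's one-line proof simply reads ``This follows immediately from Proposition \ref{pr-ul},'' tacitly using that the proof of Proposition \ref{pr-ul} only requires $G$ Hausdorff and $\pi$ a local homeomorphism, hence applies verbatim to domains over $\mathbb{C}_I$. Your write-up just spells this out explicitly, which is harmless.
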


\begin{proof}
	This follows immediately from Proposition \ref{pr-ul}.
\end{proof}

Let $(G,\pi,x)$ be a slice-domain over $\mathbb{H}$ with distinguished point. We notice that $(G_I',\pi|_{G_I'})$ is a (Riemann) domain over $\mathbb{C}_I$, for each slice-connected component $G_I'$ of $G_I$ and $I\in\mathbb{S}$. We call a function $f$ defined on $G_I$ is holomorphic if $f|_{G_I'}$ is holomorphic on each slice-connected component $G_I'$ of $G_I$. We can get the following lemma with the similar proof as in  Lemma \ref{lm-sp}.

\begin{lem}\label{le-slsd}
	(Splitting)
	Let $\mathcal{G}=(G,\pi,x)$ be a slice-domain over $\mathbb{H}$ with distinguished point, and $f$ is a function defined on $G$. The function $f$ is slice regular, if and only if for all $I,J\in\mathbb{S}$ with $I\perp J$, there exist two complex-valued holomorphic functions $F_1,F_2:G_I\rightarrow\mathbb{C}_I$, such that
	$$f_I(z)=F_1(z)+F_2(z)J,\qquad \forall z\in G_I,$$
	where $f_I:=f|_{G_I}$.
\end{lem}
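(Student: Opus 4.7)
The plan is to reduce to the classical splitting lemma (Lemma \ref{lm-sp}) by working in local charts and exploiting the pointwise uniqueness of the real-linear decomposition $\mathbb{H}=\mathbb{C}_I\oplus\mathbb{C}_I J$ whenever $I\perp J$ in $\mathbb{S}$. Concretely, every $q\in\mathbb{H}$ admits a unique representation $q=q_1+q_2 J$ with $q_1,q_2\in\mathbb{C}_I$, which gives $\mathbb{R}$-linear projections $\Pi_1,\Pi_2:\mathbb{H}\to\mathbb{C}_I$ depending only on the pair $(I,J)$.

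For the forward direction, I would assume $f$ is slice regular, fix $I\perp J$, and set $F_\imath:=\Pi_\imath\circ f|_{G_I}$ for $\imath=1,2$. Then $f_I=F_1+F_2 J$ holds tautologically, so only holomorphy of $F_1,F_2$ needs verification. For each $y\in G_I$, invoke Definition \ref{def:slicefunction} to pick a neighborhood $U$ of $y$ in $G$ and a slice-open $V\subset\mathbb{H}$ such that $\pi|_U:U\to V$ is a slice-homeomorphism and $\widetilde f:=f\circ(\pi|_U)^{-1}:V\to\mathbb{H}$ is slice regular. Applying Lemma \ref{lm-sp} to $\widetilde f$ on $V$ produces holomorphic $F,G:V_I\to\mathbb{C}_I$ with $\widetilde f|_{V_I}=F+G J$. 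Uniqueness of the pointwise splitting then forces $F=F_1\circ(\pi|_U)^{-1}$ and $G=F_2\circ(\pi|_U)^{-1}$ on $V_I$, so $F_1,F_2$ inherit holomorphy at $y$ through the chart $\pi|_{U\cap G_I}:U\cap G_I\to V_I$.

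The backward direction is strictly parallel: assume holomorphic $F_1,F_2:G_I\to\mathbb{C}_I$ exist with $f_I=F_1+F_2 J$ for each orthogonal pair $I\perp J$. Fix $y\in G$, pick a chart $\pi|_U:U\to V$ around $y$, and for arbitrary $I\in\mathbb{S}$ choose $J\in\mathbb{S}$ with $J\perp I$. Since $\pi|_U$ restricts to a homeomorphism $U\cap G_I\to V_I$, the compositions $F_\imath\circ(\pi|_U)^{-1}$ are holomorphic on $V_I$, and they furnish the splitting $\widetilde f|_{V_I}=F_1\circ(\pi|_U)^{-1}+(F_2\circ(\pi|_U)^{-1})J$. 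The reverse implication of Lemma \ref{lm-sp} then gives slice regularity of $\widetilde f$ on $V$, which is exactly slice regularity of $f$ at $y$ in the sense of Definition \ref{def:slicefunction}.

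The main obstacle—really only bookkeeping—is that $G_I$ need not be connected nor embedded in $\mathbb{C}_I$; rather, each slice-connected component is a Riemann domain over $\mathbb{C}_I$ in the sense of Definition \ref{def:Riemanndom}, so holomorphy of $F_1,F_2$ must be checked componentwise through local charts. Because $\pi|_U$ respects the slice structure by being a slice-homeomorphism, the pullback and pushforward of holomorphic data across the chart behave transparently, and the reduction to Lemma \ref{lm-sp} goes through with no further complication.
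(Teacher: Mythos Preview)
Your proposal is correct and takes essentially the same approach as the paper, which simply states that the lemma follows ``with the similar proof as in Lemma \ref{lm-sp}'' without giving further details. Your version is in fact more carefully worked out: you make explicit the passage through local slice-charts and the componentwise interpretation of holomorphy on $G_I$ as a Riemann domain over $\mathbb{C}_I$, which the paper leaves implicit.
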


\begin{thm} \label{th-idh}
	(Identity Principle) Let $(G,\pi)$ be a slice domain over $\mathbb{H}$, and $f,g$ be regular functions on $G$. If there exists $I\in\mathbb{S}$ such that $f$ and $g$ coincide on a subset of $G_I$ with an accumulation point $q_0$ in $G_I$. Then $f=g$ in $G$.
\end{thm}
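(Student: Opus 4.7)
Let $h := f-g$, which is slice regular on $G$; the goal is to show $h \equiv 0$. The plan is to define
\begin{equation*}
A := \{q\in G : h \text{ vanishes on some open neighborhood of } q \text{ in } G\},
\end{equation*}
and argue that $A$ is open, nonempty, and closed; connectedness of $G$ will then force $A=G$.

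Openness is automatic from the definition of $A$. For nonemptiness, I would pick a schlicht real-connected domain $V\subset G$ containing $q_0$ via Proposition \ref{pr-sp}. By Proposition \ref{pr-srch}, $\pi(V)$ is a real-connected slice-domain in $\mathbb{H}$, and since $V$ is schlicht, the function $h\circ(\pi|_V)^{-1}$ is slice regular on $\pi(V)$. The set on which $f=g$ within $G_I$ has $q_0$ as an accumulation point, so its image in $\pi(V)_I$ accumulates at $\pi(q_0)\in\pi(V)_I$. Theorem \ref{th-dps} then yields $h\circ(\pi|_V)^{-1}\equiv 0$ on $\pi(V)$, hence $h\equiv 0$ on $V$ and $q_0\in A$.

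For closedness, let $q\in\overline{A}$, and choose a schlicht real-connected neighborhood $V$ of $q$ by Proposition \ref{pr-sp}. Since $V$ is open and $q\in\overline{A}$, there is some $q'\in V\cap A$, and with it an open neighborhood $W\subset V$ of $q'$ on which $h$ vanishes. Write $\pi(q')\in\mathbb{C}_{I'}$ for some $I'\in\mathbb{S}$; then $\pi(W)\cap\mathbb{C}_{I'}$ is open in $\mathbb{C}_{I'}$ and accumulates at $\pi(q')\in\pi(V)_{I'}$. Applying Theorem \ref{th-dps} once more to the slice regular function $h\circ(\pi|_V)^{-1}$ on the real-connected slice-domain $\pi(V)$, we conclude $h\equiv 0$ on $V$, so $q\in A$. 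Hence $A$ is closed, and by connectedness of $G$ we get $A=G$, i.e.\ $f=g$ on $G$.

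The main technical ingredient is the schlicht real-connected neighborhood reduction (Propositions \ref{pr-sp} and \ref{pr-srch}), which transports the problem from the Riemann slice-domain $G$ to an honest real-connected slice-domain in $\mathbb{H}$, where the already-established identity principle Theorem \ref{th-dps} applies directly. The splitting lemma (Lemma \ref{le-slsd}) plays no explicit role here—it is absorbed into Theorem \ref{th-dps}. The one point requiring attention is that accumulation-in-$G_I$ transfers faithfully to accumulation-in-$\pi(V)_I$ under the local slice-homeomorphism $\pi|_V$; this is immediate because $\pi|_V$ is injective and continuous with continuous inverse between $V_I$ and $\pi(V)_I$ in the Euclidean topology on $\mathbb{C}_I$.
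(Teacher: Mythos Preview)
Your proof is correct and follows essentially the same approach as the paper: define the set $A$ of points where $f$ and $g$ agree on a neighborhood, use Proposition \ref{pr-sp} to produce schlicht real-connected neighborhoods, push down via $\pi$ to apply Theorem \ref{th-dps}, and conclude by connectedness of $G$. The only cosmetic differences are that you work with $h=f-g$ and verify closedness via $\overline{A}\subset A$, whereas the paper works directly with the equality $f=g$ and shows $G\setminus A$ is open; your explicit remark that the accumulation point transfers under the homeomorphism $\pi|_{V_I}$ is a detail the paper leaves implicit.
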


\begin{proof}
	We set
	\begin{equation*}
	A:=\{x\in G:\exists\ V\in\tau(G),\ \mbox{s.t.}\ x\in V,\ \mbox{and}\ f=g\ \mbox{on}\ V\}.
	\end{equation*}
	For each $q_0\in A$, according to Proposition \ref{pr-sp}, there exists a schlicht real-connected domain $U$ in $G$ containing $q_0$. Thence $\pi|_U:U\rightarrow\pi(U)$ is a slice-homeomorphism, and $\pi(U)$ is a real-connected slice-domain in $\mathbb{H}$. According to the Identity Principle \ref{th-dps},
	\begin{equation*}
	f\circ\pi|_U^{-1}=g\circ\pi|_U^{-1}\qquad\mbox{on}\qquad\pi(U).
	\end{equation*}
	Then $f=g$ on $U$, and $U\subset A$. Obviously, $A$ is a nonempty open subset of $G$.
	
	If $x\in G\backslash A$, let $W$ be a schlicht real-connected domain containing $x$ in $G$. Then $\pi|_W:W\rightarrow \pi(W)$ is a slice-homeomorphism, and $\pi(W)$ is a real-connected slice-domain in $\mathbb{H}$. If $A\cap W\neq\varnothing$, let $y\in A\cap W$. We notice that $A\cap W$ is a open set in $G$, then there is an open neighbourhood $V_y$ of y, such that $f=g$ on $V_y$. Consequently,
	\begin{equation*}
	f\circ\pi|_W^{-1}=g\circ\pi|_W^{-1}\qquad\mbox{on}\qquad\pi(V_y\cap W).
	\end{equation*}
	According to the Identity Principle \ref{th-dps},
	\begin{equation*}
	f\circ\pi|_W^{-1}=g\circ\pi|_W^{-1}\qquad\mbox{on}\qquad\pi(W).
	\end{equation*}
	It is clear that $x\in A$, which is a contradiction. So $W\cap A=\varnothing$. Thus $G\backslash A$ is open in $G$. Therefore $A$ is closed in $G$. And since $A$ is open in $G$ and $G$ is connected, it follows that $A=G$.
\end{proof}

\begin{defn}
	Let $\mathcal{G_\lambda}=(G_\lambda,\pi_\lambda,x_\lambda)$, $\lambda=1,2$, be slice-domains over $\mathbb{H}$ with distinguished point, and $\mathcal{G}_1\prec\mathcal{G}_2$ by the fiber preserving mapping $\varphi:G_1\rightarrow G_2$ with $\varphi(x_1)=x_2$. For every function $f$ on $G_2$, we define
	\begin{equation*}
	f|_{G_1}:=f\circ\varphi.
	\end{equation*}
\end{defn}

\begin{prop}
	Let $\mathcal{G_\lambda}=(G_\lambda,\pi_\lambda,x_\lambda)$, $\lambda=1,2$, be slice-domains over $\mathbb{H}$ with distinguished point. If $f:G_2\rightarrow\mathbb{H}$ is slice regular and $\mathcal{G}_1\prec\mathcal{G}_2$, then $f|_{G_1}$ is slice regular on $G_1$.
\end{prop}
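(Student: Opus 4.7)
The plan is to unfold the definitions and use the fiber-preserving property of the morphism $\varphi:G_1\to G_2$ to reduce slice regularity at a point of $G_1$ to slice regularity at the corresponding point of $G_2$. Fix $x\in G_1$; I want to produce a local trivialization for $f\circ\varphi$ near $x$.

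First, by Definition \ref{def:slicefunction} applied to $f$ at $\varphi(x)\in G_2$, there exist an open neighborhood $U\subset G_2$ of $\varphi(x)$ and a slice-open set $V\subset\mathbb{H}$ such that $\pi_2|_U:U\to V$ is a slice-homeomorphism and $f\circ(\pi_2|_U)^{-1}:V\to\mathbb{H}$ is slice regular. Second, since $\pi_1$ is a local slice-homeomorphism (Definition \ref{df-rsd}), there exists an open neighborhood $W\subset G_1$ of $x$ on which $\pi_1|_W$ is a slice-homeomorphism onto a slice-open subset of $\mathbb{H}$. Continuity of $\varphi$ lets me shrink to $U':=W\cap\varphi^{-1}(U)$, still an open neighborhood of $x$, and set $V':=\pi_1(U')$, which is slice-open in $\mathbb{H}$ as the image under the slice-homeomorphism $\pi_1|_W$ of an open subset of $W$. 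Then $\pi_1|_{U'}:U'\to V'$ is a slice-homeomorphism.

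The core identity is that the fiber-preserving condition $\pi_2\circ\varphi=\pi_1$ forces
\begin{equation*}
\varphi\circ(\pi_1|_{U'})^{-1}=(\pi_2|_U)^{-1}\qquad\mbox{on}\qquad V'.
\end{equation*}
Indeed, for $y\in V'$, both sides lie in $U$ (the left side because $(\pi_1|_{U'})^{-1}(y)\in U'\subset\varphi^{-1}(U)$, the right side by construction) and both project under $\pi_2$ to $y$; since $\pi_2|_U$ is injective, they agree. In particular, $V'\subset V$. Consequently,
\begin{equation*}
(f\circ\varphi)\circ(\pi_1|_{U'})^{-1}=f\circ(\pi_2|_U)^{-1}\Big|_{V'},
\end{equation*}
which is slice regular on $V'$ as the restriction of a slice regular function to a slice-open subset (here one invokes Definition \ref{df-sr}, since holomorphicity on each $(V')_I$ is inherited from holomorphicity on each $V_I$). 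This exhibits $f\circ\varphi=f|_{G_1}$ as slice regular at $x$, and since $x\in G_1$ was arbitrary, $f|_{G_1}\in\mathcal{SR}(G_1)$.

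There is no real obstacle; the only point to be careful about is verifying that $\varphi\circ(\pi_1|_{U'})^{-1}$ and $(\pi_2|_U)^{-1}$ genuinely coincide on $V'$, which amounts to checking that both maps land in $U$ and then invoking injectivity of $\pi_2|_U$. Once this identity is secured, the conclusion follows immediately by transport of slice regularity through slice-homeomorphisms.
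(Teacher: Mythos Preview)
Your proof is correct and follows essentially the same approach as the paper, which simply states that the result is trivial because the fiber-preserving map $\varphi$ is a local homeomorphism with $\pi_2\circ\varphi=\pi_1$. You have written out in full the details that the paper leaves implicit, and your verification of the identity $\varphi\circ(\pi_1|_{U'})^{-1}=(\pi_2|_U)^{-1}$ on $V'$ via injectivity of $\pi_2|_U$ is exactly the point underlying the paper's one-line justification.
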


\begin{proof}
	Trivial, since the fiber preserving mapping $\varphi$ from $\mathcal{G}_1$ to $\mathcal{G}_2$ is a local homeomorphism with $\pi_2\circ\varphi=\pi_1$.
\end{proof}

\begin{defn}
	1. Let $(G,\pi)$ be a slice-domain over $\mathbb{H}$, and $x\in G$ be a point. If $f$ is a slice regular function   near $x$, then the pair $(f,x)$ is called a local slice regular function at $x$.
	
	2. Let $(G_1,\pi_1)$, $(G_2,\pi_2)$ be slice-domains over $\mathbb{H}$, and $x_\imath\in G_\imath$, $\imath=1,2$ with $\pi_1(x_1)=\pi_2(x_2)$. Two locally holomorphic functions $(f_1,x_1)$, $(f_2,x_2)$ are called equivalent if there exist an open neighborhood $U$ of $x_1$, an open neighborhood $V$ of $x_2$ and a slice domain $W$ in $\mathbb{H}$, such that
	\begin{equation*}
	\pi_1|_U:U\rightarrow W\qquad\mbox{and}\qquad\pi_2|_V:V\rightarrow W
	\end{equation*}
	are slice-homeomorphisms, and
	\begin{equation*}
	f_1\circ\pi_1|_U^{-1}=f_2\circ\pi_2|_V^{-1}.
	\end{equation*}
	
	3. The equivalence class of a local slice regular function $(f,x)$ is denoted by $f_x$.
\end{defn}

\begin{prop}
	Let $\mathcal{G_\lambda}=(G_\lambda,\pi_\lambda,x_\lambda)$, $\lambda=1,2$, be slice-domains over $\mathbb{H}$ with distinguished point, and $\mathcal{G}_1\prec\mathcal{G}_2$. Then for every slice regular function $f$ on $G_1$, there exists at most one slice regular function $F$ on $G_2$ with $F|_{G_1}=f$.
\end{prop}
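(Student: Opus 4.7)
The plan is to reduce uniqueness to the identity principle on Riemann slice-domains (Theorem \ref{th-idh}). Suppose $F_1, F_2 : G_2 \to \mathbb{H}$ are both slice regular with $F_1|_{G_1} = F_2|_{G_1} = f$. By the definition of restriction via the (unique) fiber preserving map $\varphi : G_1 \to G_2$ from $\mathcal{G}_1$ to $\mathcal{G}_2$ given by Proposition \ref{pr-ufpm}, this amounts to saying $F_1 \circ \varphi = F_2 \circ \varphi$ on $G_1$. Equivalently, $F_1$ and $F_2$ coincide on the image $\varphi(G_1) \subset G_2$.

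The first substantive step is to show that $\varphi(G_1)$ is a nonempty open subset of $G_2$. Since $\pi_2 \circ \varphi = \pi_1$ and both $\pi_1, \pi_2$ are local slice-homeomorphisms, for each $q \in G_1$ I would choose an open neighborhood $U$ of $q$ with $\pi_1|_U$ a slice-homeomorphism onto a slice-open set, and an open neighborhood $V$ of $\varphi(q)$ with $\pi_2|_V$ a slice-homeomorphism onto a slice-open set, shrunk so that $\pi_1(U) \subset \pi_2(V)$. Then on $U$ one has $\varphi = (\pi_2|_V)^{-1} \circ \pi_1|_U$, which is a homeomorphism onto its image; in particular $\varphi$ is an open map, so $\varphi(G_1)$ is open in $G_2$.

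To invoke Theorem \ref{th-idh}, I next pick any $y \in \varphi(G_1)$ and $I \in \mathbb{S}$ with $\pi_2(y) \in \mathbb{C}_I$, together with an open neighborhood $W \subset \varphi(G_1)$ of $y$ such that $\pi_2|_W$ is a slice-homeomorphism onto its image. Because the slice topology restricted to $\mathbb{C}_I$ agrees with the Euclidean topology, the set $W \cap (G_2)_I$ corresponds under $\pi_2$ to a nonempty Euclidean-open subset of $\mathbb{C}_I$, so $y$ is an accumulation point of it in $(G_2)_I$. Since $F_1 = F_2$ on $W \subset \varphi(G_1)$, Theorem \ref{th-idh} applied to the slice regular functions $F_1, F_2$ on the slice-domain $G_2$ yields $F_1 = F_2$ everywhere on $G_2$. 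The only subtlety is the openness of $\varphi$, which is the only nontrivial input; after that, the identity principle finishes the argument.
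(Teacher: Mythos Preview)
Your proposal is correct and follows essentially the same route as the paper, which simply states that the claim ``follows immediately from the Identity Principle \ref{th-idh}.'' You have supplied the details the paper omits: that $\varphi$ is a local homeomorphism (hence $\varphi(G_1)$ contains a nonempty open set in $G_2$), and that this open set meets some $(G_2)_I$ in a set with an accumulation point, so Theorem \ref{th-idh} applies. Note that you do not actually need global openness of $\varphi$; the local argument around a single point of $\varphi(G_1)$ already suffices to produce the required accumulation point.
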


\begin{proof}
	This follows immediately from the Identity Principle \ref{th-idh}.
\end{proof}

\begin{defn}
	Let $\mathcal{G}=(G,\pi,x)$ be a slice-domain over $\mathbb{H}$ with distinguished point, and $\mathscr{F}$ be a nonempty set of slice regular functions on $G$. We call that  $\mathscr{F}$ can be  extended slice regularly to a slice-domain $\breve{\mathcal{G}}=(\breve{G},\breve{\pi},\breve{x})$ over $\mathbb{H}$ with distinguished point, if $\mathcal{G}\prec \breve{\mathcal{G}}$ and for each $f\in\mathscr{F}$, there exists a slice regular function $F$ on $\breve G$ with $F|_{\breve G}=f$.
	
	We say that $\breve{\mathcal G}$ is $\mathscr{F}$-extendible. Let $\mathscr{G}$ be the system of all $\mathscr{F}$-extendible slice-domains over $\mathbb{H}$ with distinguished point. $\mathscr{G}$ is called the $\mathscr{F}$-extendible set.
	
	A slice-domain $\mathcal{G}'$ over $\mathbb{H}$ with distinguished point is called the $\mathscr{F}$-hull of $\mathcal{G}$, if $$\mathcal{G}'\in\bigcup_{\mathcal{\breve G}\in\mathscr{G}}\mathcal{\breve G}.$$  And
	\begin{equation*}
	H_{\mathscr{F}}(\mathcal{G}):=\bigcup_{\mathcal{\breve G}\in\mathscr{G}}\mathcal{\breve G}
	\end{equation*}
	is called the set of $\mathscr{F}$-hulls of $\mathcal{G}$.
	
	
	Let $\mathcal{O}(G)$ be the set of all slice regular functions on $G$. Then
	\begin{equation*}
	H(\mathcal{G}):=H_{\mathscr{O}(G)}(\mathcal{G})
	\end{equation*}
	is called the set of envelopes of slice regularity of $\mathcal{G}$. If $\mathscr{F}=\{f\}$ for some slice regular function $f$ on $G$, then
	\begin{equation*}
	H_f(\mathcal{G}):=H_{\{f\}}(\mathcal{G})
	\end{equation*}
	is called the set of slice-domains of existence of the function $f$ with respect to $\mathcal{G}$.
\end{defn}

\begin{defn}
	A slice-domain $\mathcal{G}=(G,\pi,x)$ over $\mathbb{H}$ is called a slice-domain of (slice) regularity if there exists a slice regular function $f$ on $G$ such that $\mathcal{G}$ is a slice-domain of existence of $f$ with respect to $\mathcal{G}$, i.e., $\mathcal{G}\in H_{f}(\mathcal{G})$.
\end{defn}

\begin{thm} \label{th-exsd}
	Let $\mathcal{G}=(G,\pi,x)$ be a slice-domain over $\mathbb{H}$, $\mathscr{F}$ be a nonempty set of slice regular functions on $G$, and $\breve{\mathcal{G}}=(\breve{G},\breve{\pi},\breve{x})$ be a $\mathscr{F}$-hull of $\mathcal{G}$. Then the following hold:
	
	1. $\mathcal{G}\prec\breve{\mathcal{G}}$.
	
	2. For each function $f\in\mathscr{F}$, there exists exactly one slice regular function $F$ on $\breve{G}$ with $F|_G=f$.
	
	3. If $\mathcal{G}'=(G',\pi',x')$ is a domain over $\mathbb{H}$ with distinguished point, such that $\mathcal{G}\prec\mathcal{G}'$ and every function $f\in\mathscr{F}$ can be extended slice regularly to $G'$, then $\mathcal{G}'\prec\breve{\mathcal{G}}$.
\end{thm}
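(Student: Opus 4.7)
Parts \textbf{1} and \textbf{3} are immediate from the construction of $\breve{\mathcal{G}}$ as a union of the family $\mathscr{G}$. Indeed, the identity map on $G$ makes $\mathcal{G}$ itself $\mathscr{F}$-extendible, so $\mathcal{G}\in\mathscr{G}$; since by Theorem \ref{th-uos} any $\mathscr{F}$-hull is an upper bound of $\mathscr{G}$, this yields (1). For (3), the hypothesis exactly asserts $\mathcal{G}'\in\mathscr{G}$, and $\breve{\mathcal{G}}$ being an upper bound of $\mathscr{G}$ gives $\mathcal{G}'\prec\breve{\mathcal{G}}$.

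The real content lies in \textbf{2}, which I would prove by a pre-union construction. By Theorem \ref{th-uos} and Proposition \ref{pr-fpei}, I may replace $\breve{\mathcal{G}}$ by the pre-union $\widehat{\mathcal{G}}=(\widehat{G},\widehat{\pi},\widehat{x})$ of Theorem \ref{th-union}, where $\widehat{G}=X/{\sim_P}$ with $X=\bigsqcup_{\lambda\in\Lambda'}G_\lambda$ and $\{\mathcal{G}_\lambda\}_{\lambda\in\Lambda'}$ a set of equivalence-class representatives in $\mathscr{G}$. Fix $f\in\mathscr{F}$. For each $\lambda$, let $F_\lambda:G_\lambda\to\mathbb{H}$ be the slice regular extension of $f$, which is unique by Theorem \ref{th-idh}. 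Define $\tilde{F}:X\to\mathbb{H}$ by $\tilde{F}|_{G_\lambda}:=F_\lambda$; the goal is to show $\tilde{F}$ descends to $\widehat{G}$.

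To this end, I would introduce the relation $\sim_F$ on $X$: declare $p\sim_F q$, where $p\in G_\lambda$ and $q\in G_\rho$, if $\pi_X(p)=\pi_X(q)$ and there exists a slice-open neighborhood $W$ of this common value in $\mathbb{H}$ admitting local sections $s_p:W\to G_\lambda$, $s_q:W\to G_\rho$ of $\pi_X$ through $p$ and $q$ with $F_\lambda\circ s_p=F_\rho\circ s_q$ on $W$. This is an equivalence relation (transitivity by restricting to a common smaller $W$), and I would verify that it has property $(P)$; then by Proposition \ref{pr-sfe}, $\sim_P\,\subseteq\,\sim_F$, so $\tilde{F}$ descends to a well-defined $F:\widehat{G}\to\mathbb{H}$, which is slice regular because locally it equals $F_\lambda\circ(\pi_\lambda|_U)^{-1}$. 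Property $(P.1)$ is easy: the fiber-preserving maps $\varphi_\lambda:G\to G_\lambda$ give local sections through $x_\lambda,x_\rho$ along which both $F_\lambda$ and $F_\rho$ pull back to $f\circ(\pi|_U)^{-1}$. Uniqueness of $F$ follows, once more, from Theorem \ref{th-idh} applied on the nonempty slice-open image of $G$ in $\widehat{G}$.

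The main obstacle is property $(P.2)$: given continuous $\alpha$ in $G_\lambda$ and $\beta$ in $G_\rho$ with $\alpha(0)\sim_F\beta(0)$ and $\pi_\lambda\circ\alpha=\pi_\rho\circ\beta$, we must show $\alpha(1)\sim_F\beta(1)$. I would argue by connectivity on $T:=\{t\in[0,1]:\alpha(t)\sim_F\beta(t)\}$. Openness of $T$ is direct from the local character of $\sim_F$ and continuity of $\alpha,\beta$. For closedness, at a limit point $t^\ast$ I would use Proposition \ref{pr-rc1} to select a real-connected slice-domain $W\subset\mathbb{H}$ around $\pi_\lambda(\alpha(t^\ast))=\pi_\rho(\beta(t^\ast))$ that lifts homeomorphically through both $\pi_\lambda$ and $\pi_\rho$; approaching points $t_n\in T$ then furnish a nonempty slice-open subset of $W$ on which the two pullbacks $F_\lambda\circ s$ and $F_\rho\circ s'$ coincide, and the identity principle on the real-connected slice-domain $W$ (Theorem \ref{th-dps}) propagates this equality to all of $W$, which is precisely the witness needed for $t^\ast\in T$. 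This path-lifting compatibility, combining real-connectedness with the slice-domain identity principle, is where the preparatory machinery of the preceding sections does the decisive work.
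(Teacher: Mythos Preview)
Your proof is correct and follows essentially the same route as the paper. Parts 1 and 3 are handled identically, and for Part 2 both you and the paper pass to the pre-union $\widehat{G}=X/{\sim_P}$, introduce a germ-based equivalence relation (your $\sim_F$ is the paper's $\sim$ defined by $\pi_X(p)=\pi_X(q)$ and $(f_\lambda)_p=(f_\rho)_q$), verify property $(P)$ by an open--closed argument along paths using the identity principle, and conclude via Proposition~\ref{pr-sfe} that $\sim_P$ refines it so $\tilde F$ descends. The only cosmetic difference is that you invoke Theorem~\ref{th-dps} on a real-connected slice-domain $W\subset\mathbb{H}$, while the paper cites Theorem~\ref{th-idh} directly on the schlicht neighborhood; since the proof of Theorem~\ref{th-idh} itself reduces to Theorem~\ref{th-dps} via schlicht real-connected covers, the two arguments are interchangeable.
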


\begin{proof}
	Let $\mathscr{G}=\{\mathcal{G}_\lambda\}_{\lambda\in\Lambda}$ be the $\mathscr{F}$-extendible set, where $\Lambda$ is an index set. For each $\lambda\in\Lambda$ and $f\in\mathscr{F}$, let $\mathcal{G}_\lambda=(G_\lambda,\pi_\lambda,x_\lambda)$ and $f_\lambda:G_\lambda\rightarrow\mathbb{H}$ be the slice regular extension of $f$.
	
	1. We notice that $\mathcal{G}\in\{\mathcal{G}_\lambda\}_{\lambda\in\Lambda}$ and $\breve{\mathcal{G}}$ is a union of $\{\mathcal{G}_\lambda\}_{\lambda\in\Lambda}$, then $\mathcal{G}\prec\breve{\mathcal{G}}$.
	
	2. According to the axiom of choice, there exists a subset $\Lambda'$ of $\Lambda$, such that the cardinality of $[\mathcal{G}_\lambda]\cap\{\mathcal{G}_\rho\}_{\rho\in\Lambda'}$ is one for each $\lambda\in\Lambda$, and $\mathcal{G}\in\{\mathcal{G}_\rho\}_{\rho\in\Lambda}$. We set
	\begin{equation*}
	X:=\bigsqcup_{\lambda\in\Lambda'} G_\lambda.
	\end{equation*}
	We define an equivalence $\sim$ on $X$ by $p\sim q$, if and only if, $\pi_\lambda(p)=\pi_\rho (q)$ and $(f_\lambda)_p=(f_\rho)_q$, where $\lambda,\rho\in\Lambda'$, $p\in G_\lambda$ and $q\in G_\rho$. For each $\lambda,\rho\in\Lambda'$, let $\gamma_\lambda$ be a path in $G_\lambda$, and $\gamma_\rho$ be a path in $G_\rho$, such that $\pi_\lambda(\gamma_\lambda)=\pi_\rho(\gamma_\rho)$. For each $t\in[0,1]$, there exist a domain $U$ in $G_\lambda$ containing $\gamma_\lambda(t)$, a domain $V$ in $G_\rho$ containing $\gamma_\lambda(t)$, and a slice-domain $L$ in $\mathbb{H}$, such that
	\begin{equation*}
	\pi_\lambda|_U:U\rightarrow L\qquad\mbox{and}\qquad\pi_\rho|_V:V\rightarrow L
	\end{equation*}
	are homeomorphisms. Then
	\begin{equation*}
	\psi:=\pi_\rho|_V^{-1}\circ\pi_\lambda|_U:U\rightarrow V
	\end{equation*}
	is also a homeomorphism. According to the Identity Principle \ref{th-idh}, $f_\rho|_V\circ\psi=f_\lambda|_U$, if and only if, there exists $p\in U$ with $(f_\rho)_{\psi(p)}=(f_\lambda)_p$. Since $\gamma_\lambda$ is continuous, there exists a domain $W$ in $[0,1]$ containing $t$, such that $\gamma(W)\subset U$. If there exists $t'\in W$ with $(f_\lambda)_{\gamma_\lambda(t')}=(f_\rho)_{\gamma_\rho(t')}$, then
	\begin{equation*}
	(f_\lambda)_{\gamma_\lambda(t'')}=(f_\rho)_{\gamma_\rho(t'')},\qquad\forall\ t''\in W.
	\end{equation*}
	It follows that
	\begin{equation*}
	A:=\{s\in[0,1]:(f_\lambda)_{\gamma_\lambda(s)}=(f_\rho)_{\gamma_\rho(s)}\}
	\end{equation*}
	is open and closed in $[0,1]$. Thence
	\begin{equation*}
	A=[0,1]\qquad\mbox{or}\qquad A=\varnothing.
	\end{equation*}
	We notice that
	\begin{equation*}
	(f_\lambda)_{\gamma_\lambda(0)}=(f_\rho)_{\gamma_\rho(0)}\quad\iff\quad(f_\lambda)_{\gamma_\lambda(1)}=(f_\rho)_{\gamma_\rho(1)}.
	\end{equation*}
	And since $(f_\lambda)_{x_\lambda}=(f_\rho)_{x_\rho}$, then $\sim$ has property $(P)$, it follows that the union equivalence relation $\sim_P$ of $\{G_\iota\}_{\iota\in\Lambda'}$ is finer than $\sim$. Let $\widehat{\mathcal{G}}=(\widehat{G},\widehat{\pi},\widehat{x})$ be the pre-union of $\{\mathcal{G}_\iota\}_{\iota\in\Lambda'}$. According to $\mathcal{G}\in\{\mathcal{G}_\lambda\}_{\lambda\in\Lambda'}$, we can define an extension $F'$ on $\widehat{G}=X/\sim_P$ of $f$, by
	\begin{equation}\label{eq-fgi}
	F'|_{G_\iota}=f_\iota,\qquad\forall\iota\in\Lambda'.
	\end{equation}
	Thanks to (\ref{eq-fgi}), $F'$ is locally slice regular. So $F'$ is the slice regular extension of $f$. According to Theorems \ref{th-union} and \ref{th-uos}, $\breve{G}$ and $\widehat{G}$ are unions of  $\{G_\iota\}_{\iota\in\Lambda'}$. It is clear that $\widehat{G}\cong\breve{G}$. Let $\varphi':\widehat{G}\rightarrow\breve{G}$ be the fiber preserving map from $\widehat{\mathcal{G}}$ to $\breve{\mathcal{G}}$. Thanks to Proposition \ref{pr-fpei}, $\varphi'$ is a homeomorphism. Then we can define the slice regular extension $F$ on $\breve{G}$ of $f$, by
	\begin{equation*}
	F:=F'\circ\varphi'.
	\end{equation*}
	
	3. We notice that $\mathcal{G}'\in\{\mathcal{G}_\lambda\}_{\lambda\in\Lambda}$, then $\mathcal{G}'\prec\breve{G}$.
\end{proof}

\begin{prop}\label{pr-tsh}
	Let $\mathcal{G}:=(G,\pi,x)$ be a slice-domain of regularity and $\gamma$ be a path in $\mathbb{H}$ with $\gamma\prec\mathcal{G}$. Then $\mathcal{T}_\gamma(\mathcal{G})$ is a slice-domain of regularity.
	
	Moreover, if $f$ is a slice regular function on $G$, and $\mathcal{G}$ is a slice-domain of existence of $f$ with respect to $\mathcal{G}$. Then $\mathcal{T}_\gamma(\mathcal{G})$ is also a slice-domain of existence of $f$ with respect to $\mathcal{T}_\gamma(\mathcal{G})$.
\end{prop}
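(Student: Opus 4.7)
\emph{Proof plan.} The first assertion follows from the second: if $\mathcal{G}$ is a slice-domain of regularity, pick $f \in \mathcal{SR}(G)$ with $\mathcal{G} \in H_f(\mathcal{G})$, and the second assertion applied to this $f$ gives $\mathcal{T}_\gamma(\mathcal{G}) \in H_f(\mathcal{T}_\gamma(\mathcal{G}))$, so $\mathcal{T}_\gamma(\mathcal{G})$ is a slice-domain of regularity. Hence I focus on the second assertion. The idea is that the base-point shift $\mathcal{T}_\gamma$ affects neither the underlying space, nor the projection, nor the notion of slice regular extension of $f$; it should therefore set up a bijection between the $\{f\}$-extendible sets of $\mathcal{G}$ and $\mathcal{T}_\gamma(\mathcal{G})$ and then commute with the formation of their union.

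Write $\mathscr{G}$ for the $\{f\}$-extendible set of $\mathcal{G}$ and $\mathscr{G}'$ for that of $\mathcal{T}_\gamma(\mathcal{G})$. The central claim is $\mathscr{G}' = \mathcal{T}_\gamma(\mathscr{G})$. For the inclusion $\mathcal{T}_\gamma(\mathscr{G}) \subset \mathscr{G}'$, take $\breve{\mathcal{G}} \in \mathscr{G}$ with fiber-preserving map $\varphi : G \to \breve G$ and slice regular extension $F$ of $f$ on $\breve G$; Proposition \ref{pr-sdt}(a) gives $\mathcal{T}_\gamma(\mathcal{G}) \prec \mathcal{T}_\gamma(\breve{\mathcal{G}})$ via the \emph{same} $\varphi$, and since the identity $F \circ \varphi = f$ makes no reference to base points, $F$ is still a valid extension, witnessing $\mathcal{T}_\gamma(\breve{\mathcal{G}}) \in \mathscr{G}'$. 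For the reverse inclusion, let $\mathcal{H} \in \mathscr{G}'$. The crucial observation is that $\gamma^{(-1)} \prec \mathcal{T}_\gamma(\mathcal{G})$ (its lifting in $G$ based at $\gamma_\mathcal{G}(1)$ is the reverse of $\gamma_\mathcal{G}$), so by Proposition \ref{pr-psdp} we also have $\gamma^{(-1)} \prec \mathcal{H}$, making $\mathcal{T}_{\gamma^{(-1)}}(\mathcal{H})$ well-defined. Applying Proposition \ref{pr-sdt}(a) to $\mathcal{T}_\gamma(\mathcal{G}) \prec \mathcal{H}$ with the path $\gamma^{(-1)}$ yields $\mathcal{G} = \mathcal{T}_{\gamma^{(-1)}}(\mathcal{T}_\gamma(\mathcal{G})) \prec \mathcal{T}_{\gamma^{(-1)}}(\mathcal{H})$, and the same fiber-preserving map still implements the extension of $f$; hence $\mathcal{T}_{\gamma^{(-1)}}(\mathcal{H}) \in \mathscr{G}$ and $\mathcal{H} = \mathcal{T}_\gamma(\mathcal{T}_{\gamma^{(-1)}}(\mathcal{H})) \in \mathcal{T}_\gamma(\mathscr{G})$.

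With the claim in hand, Proposition \ref{pr-sdt}(d) (commutation of $\mathcal{T}_\gamma$ with unions) yields
\begin{equation*}
H_f(\mathcal{T}_\gamma(\mathcal{G})) = \bigcup_{\mathcal{H} \in \mathscr{G}'} \mathcal{H} = \bigcup_{\breve{\mathcal{G}} \in \mathscr{G}} \mathcal{T}_\gamma(\breve{\mathcal{G}}) = \mathcal{T}_\gamma\bigl(H_f(\mathcal{G})\bigr),
\end{equation*}
and the hypothesis $\mathcal{G} \in H_f(\mathcal{G})$ then gives $\mathcal{T}_\gamma(\mathcal{G}) \in H_f(\mathcal{T}_\gamma(\mathcal{G}))$, as desired. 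The main obstacle is the careful bookkeeping of fiber-preserving maps: one must verify that the \emph{same} continuous map serves as the fiber-preserving map before and after the base-point shift (so that the extension $F$ survives unchanged), and that the containment $\gamma^{(-1)} \prec \mathcal{H}$ propagates to every $\mathcal{H} \in \mathscr{G}'$; once these points are settled, the result reduces to the commutation already established in Proposition \ref{pr-sdt}(d).
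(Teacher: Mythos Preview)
Your proof is correct and follows essentially the same approach as the paper, which simply states that the result ``follows immediately from Proposition \ref{pr-sdt}.'' You have spelled out in detail what the paper leaves implicit: the bijection $\mathscr{G}' = \mathcal{T}_\gamma(\mathscr{G})$ between the $\{f\}$-extendible sets (using parts (a) of Proposition \ref{pr-sdt} together with Proposition \ref{pr-psdp} and the fact that the fiber-preserving map is unchanged under base-point shift), followed by an application of part (d) to transport the union.
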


\begin{proof}
	This follows immediately from Proposition \ref{pr-sdt}.
\end{proof}

\begin{prop}\label{pr-sdh}
	Let $\mathcal{G}=(G,\pi,x)$ be a slice-domain of regularity, $y\in G$, and $\mathcal{G}'=(G',\pi',x')$ be a slice-domain over $\mathbb{H}$ with distinguished point. If for each slice regular function $f$ on $G$, there exists a slice regular function $g$ such that  $f_y=g_{x'}$,  then $\mathcal{G}'\prec(G,\pi,y)$.
\end{prop}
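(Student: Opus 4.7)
The plan is to build the union $\mathcal{H}:=(G,\pi,y)\cup\mathcal{G}'$ via Theorem~\ref{th-union}, extend to $\mathcal{H}$ a slice regular function witnessing that $\mathcal{G}$ is a slice-domain of regularity, and then invoke Theorem~\ref{th-exsd}~(3) to see that $\mathcal{H}$ collapses fiber-preservingly onto $(G,\pi,y)$; the containment $\mathcal{G}'\prec(G,\pi,y)$ will then follow by transitivity.

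First I would fix $f\in\mathcal{O}(G)$ with $\mathcal{G}\in H_f(\mathcal{G})$. Since $G$ is path-connected by Remark~\ref{rk-pc}, choose a path $\alpha$ in $G$ from $x$ to $y$ and set $\gamma:=\pi\circ\alpha$; Proposition~\ref{pr-tsh} applied to $\gamma$ shows that $(G,\pi,y)=\mathcal{T}_\gamma(\mathcal{G})$ is still a slice-domain of existence of the same $f$. The hypothesis $f_y=g_{x'}$ with $g\in\mathcal{O}(G')$ entails $\pi(y)=\pi'(x')$, so Theorem~\ref{th-union} provides a union $\mathcal{H}=(\widehat{G},\widehat{\pi},\widehat{x})$ of $\{(G,\pi,y),\mathcal{G}'\}$, together with fiber-preserving maps $\varphi_1:G\to\widehat{G}$ and $\varphi_2:G'\to\widehat{G}$ satisfying $\varphi_1(y)=\varphi_2(x')=\widehat{x}$.

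The technical heart of the proof is to manufacture a slice regular extension of $f$ on $\widehat{G}$. On $X:=G\sqcup G'$ define the relation $\approx$ by $p\approx q$ iff $\pi_X(p)=\pi_X(q)$ and the germs of $F_X:=f\sqcup g$ at $p$ and at $q$ coincide. The base-point condition in property $(P)$ is precisely the hypothesis $f_y=g_{x'}$. For the path condition, if $\mu$ in $G$ and $\nu$ in $G'$ are continuous paths with $\pi\circ\mu=\pi'\circ\nu$ and $\mu(0)\approx\nu(0)$, the set $A:=\{t\in[0,1]:\mu(t)\approx\nu(t)\}$ is open by the local slice-homeomorphism property of $\pi$ and $\pi'$; it is closed as follows. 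At any limit point $t_0$ of $A$, pick schlicht neighborhoods $U\ni\mu(t_0)$ and $V\ni\nu(t_0)$ with $\pi|_U$, $\pi'|_V$ slice-homeomorphisms, and inside $\pi(U)\cap\pi'(V)$ select a real-connected slice-domain $W\subset\mathbb{H}$ around $\pi\circ\mu(t_0)$ via Proposition~\ref{pr-rc1}. Setting $U':=(\pi|_U)^{-1}(W)$ and $V':=(\pi'|_V)^{-1}(W)$, the transferred functions $f\circ(\pi|_{U'})^{-1}$ and $g\circ(\pi'|_{V'})^{-1}$ are slice regular on $W$ and agree on a neighborhood of $\pi\circ\mu(t_n)$ for any $t_n\in A$ close enough to $t_0$; by the Identity Principle (Theorem~\ref{th-dps}) they agree on all of $W$, giving in particular germ agreement at $\mu(t_0),\nu(t_0)$, so $t_0\in A$. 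Thus $\approx$ has property $(P)$; by Proposition~\ref{pr-sfe} the union equivalence $\sim_P$ refines $\approx$, so $F_X$ descends to a well-defined $F:\widehat{G}\to\mathbb{H}$ which is slice regular because locally $\widehat{\pi}$ is a slice-homeomorphism and $F$ coincides with a local transfer of $f$ or of $g$.

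To conclude, $\mathcal{H}$ is $\{f\}$-extendible from $(G,\pi,y)$, and since $(G,\pi,y)$ is a slice-domain of existence of $f$, Theorem~\ref{th-exsd}~(3) forces $\mathcal{H}\prec(G,\pi,y)$. Combining with $\mathcal{G}'\prec\mathcal{H}$ from the union construction via the transitivity in Proposition~\ref{pr-weak} yields $\mathcal{G}'\prec(G,\pi,y)$. The main obstacle is the closedness step for $A$: bootstrapping pointwise germ equality at $\mu(t_n),\nu(t_n)$ to agreement of the two slice regular functions on the whole real-connected slice-domain $W\subset\mathbb{H}$ relies squarely on the Identity Principle Theorem~\ref{th-dps}, and the selection of $W$ as real-connected cannot be dropped, this being the one place where the slice-topological machinery developed in the earlier sections is genuinely used.
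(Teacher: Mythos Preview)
Your argument is correct and follows the same route the paper has in mind: transport the distinguished point via Proposition~\ref{pr-tsh} so that $(G,\pi,y)$ is again a slice-domain of existence of the witnessing $f$, form the union with $\mathcal{G}'$, push $f\sqcup g$ down to the union via a property-$(P)$ equivalence, and conclude by Theorem~\ref{th-exsd}(3). The paper's own proof is the one-line ``follows from Proposition~\ref{pr-sdt}, Theorem~\ref{th-exsd}, Proposition~\ref{pr-tsh}''; what you have written is exactly the unpacking of that citation, in particular your open--closed argument for $A$ is the specialization to two factors of the argument already carried out inside the proof of Theorem~\ref{th-exsd}(2), so you could have shortened your write-up by invoking that proof directly rather than redoing it.
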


\begin{proof}
This follows immediately from Proposition \ref{pr-sdt}, Theorem \ref{th-exsd}, and Proposition \ref{pr-tsh}.
\end{proof}

\section{An extension formula}

In this section, we introduce a new concept so-called real Euclidean and rectify the general extension formula in \cite[Theorem 4.2]{Colombo2009001} (see Proposition \ref{pr-xfh}). We prove that all the slice-domains of regularity are real Euclidean for the proof of the representation formula (see Theorem \ref{th-dere}).

\begin{defn}\label{pr-sdi} 
	A slice-domain $(G,\pi)$ over $\mathbb{H}$ is called real (locally) Euclidean. If for each point $x\in G_{\mathbb{R}}$, there exists a open neighbourhood $U$ of $x$, and a positive real number $r$, such that $\pi_U:U\rightarrow B_{\mathbb{H}}(\pi(x),r)$ is a slice-homeomorphism.
	
	A slice-domain $(G,\pi,x)$ over $\mathbb{H}$ with distinguished point is called real (locally) Euclidean, if $(G,\pi)$ is real Euclidean. A domain $U$ in $G$ is called real (locally) Euclidean (with respect to $\pi$), if $(U,\pi|_U)$ is real Euclidean.
\end{defn}

\begin{prop}\label{pr-xfh} 
	(Extension Formula) Let $I_\lambda\in\mathbb{S}$, and $U_\lambda$ be an open set in $\mathbb{C}_{I_\lambda}$, $\lambda=1,2$. If $I_1\neq I_2$, and there exists a function $f:U_1\cup U_2\rightarrow\mathbb{H}$ such that $f|_{U_1}$ and $f|_{U_2}$ are holomorphic. Then there exists a slice regular function $\widetilde{f}$ on a real Euclidean slice-open set $V$, such that $\widetilde{f}|_{U^+}=f|_{U^+}$, where
	\begin{equation*}
	\mathbb{C}_{I_\lambda}^+:=\{x+yI_\lambda\in\mathbb{C}_{I_\lambda}:x,y\in\mathbb{R},\ \mbox{and}\ y\ge 0\},\qquad\lambda=1,2,
	\end{equation*}
	\begin{equation*}
	U^+:=(U_1\cap\mathbb{C}_{I_1}^+)\cup(U_2\cap\mathbb{C}_{I_2}^+),
	\end{equation*}
	\begin{equation*}
	V':=\{x+yJ\in\mathbb{H}:x,y\in\mathbb{R}\ with\ y\ge 0,\ x+yI_\lambda\in U_\lambda,\ \lambda=1,2,\ and\ J\in\mathbb{S}\},
	\end{equation*}
	and
	\begin{equation*}
	V:=V'\cup U^+.
	\end{equation*}
	Moreover, if $W$ is a slice-connected component of $V$ with $W\cap U^+\neq\varnothing$, then there exists unique $\widetilde{f}|_W$ is the unique slice regular extension on $W$ of $f|_{W\cap U^+}$ and
	\begin{equation}\label{eq-ex}
	\begin{split}
	\widetilde{f}|_W(x+yJ)=&((I_1-I_2)^{-1}I_1+J(I_1-I_2)^{-1})\widetilde{f}|_W(x+yI_1)\\&+((I_2-I_1)^{-1}I_2+J(I_2-I_1)^{-1})\widetilde{f}|_W(x+yI_2)
	\end{split}
	\end{equation}
	for each $x,y\in\mathbb{R}$, $J\in\mathbb{S}$ with $y\ge 0$ and $x+yJ\in W$.
\end{prop}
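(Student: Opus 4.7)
The plan is to define $\widetilde{f}:V\to\mathbb{H}$ by $\widetilde{f}:=f$ on $U^+$ and by the right-hand side of (\ref{eq-ex}) on $V'$, using the unique representation $q=x+yJ$ with $y\geq 0$ (forced to $y=0$ when $q\in\mathbb{R}$). For consistency on $V'\cap U^+$, one must check that substituting $J=I_1$ (resp.~$J=I_2$) into (\ref{eq-ex}) returns $f_1(x+yI_1)$ (resp.~$f_2(x+yI_2)$). This reduces to the algebraic identities
\begin{equation*}
(I_1-I_2)^{-1}I_1+I_1(I_1-I_2)^{-1}=1,\qquad (I_2-I_1)^{-1}I_2+I_1(I_2-I_1)^{-1}=0,
\end{equation*}
together with their $I_1\leftrightarrow I_2$ counterparts. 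Setting $K:=I_1-I_2$, one has $K^{-1}=-K/|K|^2$, and the identities follow from direct computation of $KI_1+I_1K=-2(1-\langle I_1,I_2\rangle)=-|K|^2$ and $KI_2+I_1K=0$. Slice-openness of $V$ is verified by slicing: $V\cap\mathbb{C}_J$ is described for $y>0$ by $x+yI_\lambda\in U_\lambda$, for $y<0$ by $x-yI_\lambda\in U_\lambda$ (using $x+yJ=x+(-y)(-J)$), and at $y=0$ by $x\in U_1\cap U_2$, all of which are open conditions. The real-Euclidean property is verified at each $x_0\in V_\mathbb{R}\cap U_1\cap U_2$ by noting that for small $r>0$ every $q=x+yJ\in B_{\mathbb{H}}(x_0,r)$ satisfies $x+yI_\lambda\in U_\lambda$, so $q\in V$; at real boundary points one restricts to the slice-connected components of $V$ that meet $U^+$ along $U_1\cap U_2\cap\mathbb{R}$.

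The technical heart is verifying slice regularity. Fix $J\in\mathbb{S}$ and apply $\bar\partial_J=\tfrac{1}{2}(\partial_x+J\partial_y)$ to $\widetilde{f}|_{V\cap\mathbb{C}_J}$ on $y>0$. Writing $A(J):=(I_1-I_2)^{-1}I_1+J(I_1-I_2)^{-1}$ and $B(J):=(I_2-I_1)^{-1}I_2+J(I_2-I_1)^{-1}$, and using $\partial_y f_\lambda(x+yI_\lambda)=I_\lambda\partial_x f_\lambda(x+yI_\lambda)$ (from holomorphicity of $f_\lambda$ on $\mathbb{C}_{I_\lambda}$), one obtains
\begin{equation*}
2\bar\partial_J\widetilde{f}(x+yJ)=[A(J)+JA(J)I_1]\partial_x f_1(x+yI_1)+[B(J)+JB(J)I_2]\partial_x f_2(x+yI_2).
\end{equation*}
Both bracketed expressions vanish by direct expansion using $J^2=I_\lambda^2=-1$. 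For $y<0$, the identification $x+yJ=x+(-y)(-J)$ rewrites the formula in terms of $A(-J),B(-J)$, and the analogous vanishing $A(-J)-JA(-J)I_1=0$ is the same identity applied to $-J$. Continuity across $y=0$ uses $A(\pm J)+B(\pm J)=1$ (a direct computation using $(I_1-I_2)^{-1}(I_1-I_2)=1$) together with $f_1|_\mathbb{R}=f_2|_\mathbb{R}=f|_\mathbb{R}$, so the standard removability-across-$\mathbb{R}$ result for a continuous function that is separately holomorphic on the two half-planes of $\mathbb{C}_J$ promotes holomorphicity to all of $V\cap\mathbb{C}_J$. Doing this for every $J\in\mathbb{S}$ gives slice regularity of $\widetilde{f}$ on $V$.

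Uniqueness on a slice-connected component $W$ of $V$ with $W\cap U^+\neq\varnothing$ follows from the Identity Principle (Theorem \ref{th-idh}): any slice regular extension of $f|_{W\cap U^+}$ to $W$ must coincide with $\widetilde{f}$ on a nonempty open subset of $W_{I_1}$ or $W_{I_2}$, hence on all of $W$. The main obstacles I anticipate are twofold: first, the non-commutative algebraic bookkeeping (one must keep track of the side on which each coefficient acts, so that the cancellations $A(J)+JA(J)I_1=0$ and $A(I_\lambda)=\delta_{\lambda,1}$ actually go through); and second, the delicate real-Euclidean behavior of $V$ near real boundary points of $U_1$ or $U_2$, where $V$ is slice-open but not itself Euclidean-open, and where the argument must be restricted to the slice-connected components meeting $U^+$.
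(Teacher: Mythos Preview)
Your approach is essentially the same as the paper's: define $\widetilde{f}$ by the formula (\ref{eq-ex}) on $V'$ and by $f$ on $U^+$, check the two agree on the overlap, verify slice-openness and the real-Euclidean property of $V$, and invoke the Identity Principle (Theorem~\ref{th-idh}) for uniqueness. The paper defers the slice-regularity computation to the proof of \cite[Theorem~3.2]{Colombo2009001}, whereas you carry out the $\bar\partial_J$-calculation and the algebraic identities $A(J)+JA(J)I_1=0$, $A(I_\lambda)=\delta_{\lambda,1}$ explicitly; these are correct.

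One small point worth flagging (and the paper glosses over the same thing): your description of $V\cap\mathbb{R}$ as ``$x\in U_1\cap U_2$'' is actually the description of $V'\cap\mathbb{R}$. Since $V=V'\cup U^+$ and $U^+\cap\mathbb{R}=(U_1\cup U_2)\cap\mathbb{R}$, one has $V_\mathbb{R}=(U_1\cup U_2)\cap\mathbb{R}$, which may strictly contain $U_1\cap U_2\cap\mathbb{R}$. At a real point in, say, $U_1\setminus U_2$, no Euclidean ball is contained in $V$, so neither the slice-openness nor the real-Euclidean claim for $V$ holds as stated in full generality. Your hedging about ``restricting to components meeting $U^+$ along $U_1\cap U_2\cap\mathbb{R}$'' does not resolve this. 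The paper's line ``$U^+\cap\mathbb{R}\subset V'$'' has exactly the same gap. In every application made later (notably Theorem~\ref{th-sdre}, where $U_1=U_2$ is a disc in a single slice) one has $U_1\cap\mathbb{R}=U_2\cap\mathbb{R}$ and the difficulty disappears; you may simply add this as a standing hypothesis, or work with $V'$ in place of $V$.
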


\begin{proof}
	For each $J\in\mathbb{S}$ and $q\in V_J':=V'\cap\mathbb{C}_J$, there exists $r\in\mathbb{R}^+$, such that the open Euclidean ball $B_{I_\lambda}(P_J^{I_\lambda}(q),r)$ in $\mathbb{C}_{I_\lambda}$ is contained in $U_\lambda$, for each $\lambda\in\{1,2\}$. It is clear that
	\begin{equation*}
	B_J(q,r)\subset V',\qquad\forall q\in V_J'.
	\end{equation*}
	Then the $V_J'$ is an open set in $\mathbb{C}_J$ for each $J\in\mathbb{S}$. And according to Proposition \ref{pr-sdo}, $V'$ is a slice-open set. If $q\in\mathbb{R}$, then the Euclidean ball $B_\mathbb{H}(q,r)$ in $\mathbb{H}$ is contained in $V'$. It follows that $V'$ is real Euclidean. We notice that
	\begin{equation*}
	U^+\cap\mathbb{R}\subset V',
	\end{equation*}
	thence $V$ is also a real Euclidean slice-open set.
	
	We define a function $F$ on $V'$ by
	\begin{equation*}
	\begin{split}
	F(x+yJ):=&((I_1-I_2)^{-1}I_1+J(I_1-I_2)^{-1})f(x+yI_1)\\&+((I_2-I_1)^{-1}I_2+J(I_2-I_1)^{-1})f(x+yI_2)
	\end{split}
	\end{equation*}
	for each $J\in\mathbb{S}$, $x,y\in\mathbb{R}$ with $y\ge 0$, $x+yI_1\in U_1$ and $x+yI_2\in U_2$. By direct calculation (see the proof of \cite[Theorem 3.2]{Colombo2009001}), $F$ is slice regular on $V'$. And since
	\begin{equation*}
	F=f\qquad\mbox{on}\qquad V'\cap U^+,
	\end{equation*}
	then the function $\widetilde{f}:V\rightarrow\mathbb{H}$, defined by
	\begin{equation*}
	\widetilde{f}(p):=\left\{
	\begin{split}
	&F(p),\qquad &&p\in V',
	\\&f(p), &&p\in U^+,
	\end{split}
	\right.
	\end{equation*}
	is a slice regular extension of $f|_{U^+}$.
	
	If $G$ is a slice regular extension on $W$ of $f|_{W\cap U^+}$. Then
	\begin{equation*}
	G=f=\widetilde{f}\qquad\mbox{on}\qquad W\cap U^+.
	\end{equation*}
	According to Identity Principle \ref{th-idh}, $G=\widetilde{f}|_W$.
\end{proof}

According to Proposition \ref{pr-xfh}, the slice regular extension $\widetilde{f}$ of $f$ and $f$ itself only coincide on $U^+$, and may not coincide on $(U_1\cup U_2)\cap V$ as \cite[Theorem 4.2]{Colombo2009001} . The reason is the value of $F$ at $x+yJ$ in (\ref{eq-ex}) only need values of $f$ at two points, but there may be four points in $U_1\cup U_2$ to choice, i.e., $x\pm yI_1$ and $x\pm yI_2$. So $F$ may not be well defined, such as $U_1=\Omega_I$ and $U_2=\Omega_K$, where $\Omega,I$ is defined in Example \ref{th-ce} and $K\in\mathbb{S}$ with $K\perp I$.

\begin{thm}\label{th-sdre}
	All the slice-domains of regularity are real Euclidean.
\end{thm}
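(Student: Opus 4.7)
The strategy is to exploit the maximality built into the definition of slice-domain of regularity. Let $\mathcal{G}=(G,\pi,x^{*})$ be a slice-domain of regularity with witness $f\in\mathcal{SR}(G)$, so $\mathcal{G}\in H_f(\mathcal{G})$. Fix a real point $p\in G_{\mathbb{R}}$ and set $q_0:=\pi(p)\in\mathbb{R}$. The goal is to produce an open neighborhood $U\ni p$ and some $R>0$ such that $\pi|_U:U\to B_{\mathbb{H}}(q_0,R)$ is a slice-homeomorphism. The plan is to construct an enlargement $\mathcal{G}'$ of $(G,\pi,p)$ that attaches a schlicht copy of $B_{\mathbb{H}}(q_0,R)$ at $p$, to extend $f$ slice regularly to $\mathcal{G}'$, and then to invoke the domain-of-existence property to force $\mathcal{G}'\cong(G,\pi,p)$; the isomorphism will then exhibit the desired Euclidean neighborhood of $p$ inside $G$.

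To produce the radius, pick by Proposition \ref{pr-sp} a schlicht real-connected domain $U_0\ni p$ and identify $U_0$ with $\pi(U_0)\subset\mathbb{H}$. Since $q_0\in\mathbb{R}$, applying the Splitting Lemma \ref{lm-sp} on one slice $\mathbb{C}_I$ gives two complex-holomorphic components, and Taylor-expanding them at $q_0$ assembles to a quaternionic power series $\tilde f(q)=\sum_{n\geq 0}(q-q_0)^n a_n$ whose coefficients $a_n=\tfrac{1}{n!}\partial_x^n f(q_0)\in\mathbb{H}$ are intrinsically determined by the restriction of $f$ to the real interval through $q_0$ and do not depend on $I$. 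The quaternionic radius $R:=1/\limsup_n|a_n|^{1/n}$ is therefore positive, and $\tilde f$ is a slice regular function on the slice-domain $B_{\mathbb{H}}(q_0,R)$, which is slice-connected because $q_0\in\mathbb{R}$. Set $\mathcal{B}:=(B_{\mathbb{H}}(q_0,R),\mathrm{id},q_0)$ and form the union $\mathcal{G}':=(G,\pi,p)\cup\mathcal{B}=(G',\pi',x')$ via Theorems \ref{th-union} and \ref{th-uos}, with canonical fiber-preserving maps $\varphi_G:G\to G'$ and $\varphi_B:B_{\mathbb{H}}(q_0,R)\to G'$.

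Next, I would extend $f$ to $\mathcal{G}'$ in the style of Theorem \ref{th-exsd}: on $G\sqcup B_{\mathbb{H}}(q_0,R)$ consider the relation identifying two points whenever their projections in $\mathbb{H}$ agree and the germs of $f$ and $\tilde f$ at them coincide. The clopen-in-$[0,1]$ argument using the Identity Principle \ref{th-idh} shows that this relation has property $(P)$; since $\sim_P$ is the coarsest such relation, one obtains a well-defined slice regular $F\in\mathcal{SR}(G')$ with $F\circ\varphi_G=f$. By Proposition \ref{pr-tsh} applied to any path in $G$ from $x^{*}$ to $p$, the shifted datum $(G,\pi,p)$ is itself a slice-domain of existence of $f$, so the $\{f\}$-extension $\mathcal{G}'\succ(G,\pi,p)$ forces $\mathcal{G}'\prec(G,\pi,p)$ and hence $\mathcal{G}'\cong(G,\pi,p)$. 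Let $\psi:G'\to G$ be the inverse fiber-preserving isomorphism from Proposition \ref{pr-fpei}. Since $\pi'\circ\varphi_B=\mathrm{id}_{B_{\mathbb{H}}(q_0,R)}$, the map $\varphi_B$ is injective; setting $U:=\psi\circ\varphi_B(B_{\mathbb{H}}(q_0,R))$ gives an open neighborhood of $p$, and $\pi|_U=(\psi\circ\varphi_B)^{-1}$ is a slice-homeomorphism onto $B_{\mathbb{H}}(q_0,R)$, as required.

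The main technical hurdle will be the gluing step, that is, checking that the germ-equivalence really has property $(P)$ in the union $G\sqcup B_{\mathbb{H}}(q_0,R)$: one must invoke the Identity Principle \ref{th-idh} to upgrade germ equality at the endpoints of each subpath — with one endpoint in $G$ and the other in the ball — to a clopen condition on the parameter interval, so that matching germs at the distinguished base points propagate consistently to every identified pair under $\sim_P$. A secondary, more routine point is verifying that the radius $R>0$ does not depend on the slice chosen for the Splitting Lemma, which rests on the fact that Taylor coefficients at a real point are intrinsic quaternions, not slice-dependent complex numbers.
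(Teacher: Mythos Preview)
Your argument is correct and follows the same high-level strategy as the paper: move the distinguished point to the given real point (Proposition \ref{pr-tsh}), produce a slice regular extension of $f$ to a genuine Euclidean ball $B_{\mathbb{H}}(q_0,R)$, take the union of $(G,\pi,p)$ with this ball, extend $f$ to the union, and then let maximality force the ball to embed back into $G$. The one substantive difference is how the extension on the ball is built. You Taylor-expand at the real center and use that the coefficients $a_n=\tfrac{1}{n!}\partial_x^n f(q_0)$ are slice-independent, yielding a convergent quaternionic power series on $B_{\mathbb{H}}(q_0,R)$. The paper instead invokes its Extension Formula (Proposition \ref{pr-xfh}) with $I_1=-I_2=I$: starting from the single-slice disk $B_I(\pi(q),r)$ (obtained from any schlicht neighborhood of $q$), the formula manufactures a slice regular function on the full ball $B_{\mathbb{H}}(\pi(q),r)$ agreeing with $f\circ\pi|_U^{-1}$ on $B_I$. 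The paper then glues by observing that both $(G,\pi,q)$ and $(\mathbb{B},\mathrm{id},\pi(q))$ are $\{f|_U\}$-extensions of the common schlicht piece $(U,\pi|_U,q)$, and appeals to Theorem \ref{th-exsd} rather than reproving the property-$(P)$ germ argument. Your route is more self-contained and classical; the paper's route keeps everything within the representation-formula machinery it is developing, and avoids introducing power series. A cosmetic difference: the paper argues by contradiction, you argue directly, but the content is the same.
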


\begin{proof}
Suppose $\mathcal{G}=(G,\pi,x)$ is a slice-domain of regularity, which is not real Euclidean. There exists a slice regular function $f$ on $G$, such that $\mathcal{G}$ is a slice-domain of existence of $f$ (with respect to $\mathcal{G}$). Since $\mathcal{G}$ is not real Euclidean, then there exists $q\in G_\mathbb{R}$ and a path $\gamma$ in $G$ form $x$ to $q$, such that
\begin{equation}\label{eq-ct}
(B_\mathbb{H}(\pi(q),r_0),id_{B_\mathbb{H}(\pi(q),r_0)},\pi(q))\nprec\mathcal{T}_{\pi(\gamma)}\mathcal{G},\qquad\forall\ r_0>0.
\end{equation}
Then there exist a positive real number $r\in\mathbb{R}^+$ and a real-connected slice-domain $U$ in $G$ containing $q$, such that $\pi|_{U_I}:U_I\rightarrow B_I(\pi(q),r)$
is a homeomorphism with respect to $\tau(G_I)$ and $\tau(\mathbb{C}_I)$, and
\begin{equation*}
\pi(U)\subset\mathbb{B}:=B_\mathbb{H}(\pi(q),r).
\end{equation*}
Thanks to Proposition \ref{pr-xfh} (setting $I_1=-I_2=I$ and $U_1=U_2=\mathbb{B}_I$), there exists a slice regular extension $f':\mathbb{B}\rightarrow\mathbb{H}$ of the slice regular function $f\circ\pi|_U^{-1}:\mathbb{B}_I\rightarrow\mathbb{H}$. We notice that
\begin{equation*}
(G,\pi,q)\succ(U,\pi_U,q),\qquad(\mathbb{B},id_{\mathbb{B}},\pi(q))\succ(U,\pi_U,q),
\end{equation*}
and $f$, $f'$ are slice regular extension of the slice regular function $f|_U$, and thanks to Theorem \ref{th-exsd}, we have for each union $\mathcal{G}'=(G',\pi',q')$ of $(G,\pi,q)$ and $(\mathbb{B},id_{\mathbb{B}},\pi(q))$, there exists a slice regular function $\widetilde{f}$ on $G'$. Since $\mathcal{G}$ is the slice-domain of existence of $f$,
\begin{equation*}
\mathcal{G}\succ\mathcal{T}_{\pi(\gamma)^{-1}}(\mathcal{G}').
\end{equation*}
According to Proposition \ref{pr-sdt}, it follows that $\mathcal{T}_{\pi(\gamma)}(\mathcal{G})\succ\mathcal{G}'$. Therefore
\begin{equation*}
(\mathbb{B},id_\mathbb{B},\pi(q))\prec\mathcal{G}'\prec\mathcal{T}_{\pi(\gamma)}\mathcal{G}
\end{equation*}
which is a contradiction to (\ref{eq-ct}).
\end{proof}

\section{Technical lemmas}

In this section, we prove two technical lemmas for the representation formula in Theorem \ref{th-dere}. For each $N\in\mathbb{N}^+$, $\sigma_N$ defined in Lemma \ref{le-ct}, actually corresponding to a complex structure on $\mathbb{R}^{2^N}$ (see Remark \ref{rm-cs}), has an intertwining relation with the elements in $\mathbb{S}^N$ (see Lemmas \ref{le-ct} and \ref{pr-ct}).

For each $N\in\mathbb{N}^+$, $I=(I_1,I_2,...,I_N)\in\mathbb{S}^N$ and $m\in\{1,2,...,2^N\}$, we set
\begin{equation}\label{eq-im}
I(m):=\prod_{\imath=N}^1 (I_\imath I_{\imath-1})^{m_\imath}=(I_N I_{N-1})^{m_N}(I_{N-1} I_{N-2})^{m_{N-1}}...(I_{1} I_{0})^{m_{1}},
\end{equation}
where $I_{0}:=1$ and $(m_N m_{N-1} ... m_1)_2$ is the binary number of $m-1$. We define a map
\begin{equation*}
\zeta:\bigsqcup_{\imath\in\mathbb{N}^+}\mathbb{S}^\imath\rightarrow\bigsqcup_{\imath\in\mathbb{N}^+}\mathbb{H}^{2^\imath},\qquad I\mapsto(I(1),I(2),...,I(2^\jmath)),\qquad\forall\ \jmath\in\mathbb{N}^+\ \mbox{and}\ I\in\mathbb{S}^\jmath.
\end{equation*}

For each set $A$ and $\imath,\jmath\in\mathbb{N}^+$, we denote the set of all $\imath\times\jmath$ matrices of $A$ by $M_{\imath\times\jmath}(A)$, and the set of all $\imath\times\imath$ matrices of $A$ by $M_\imath(A)$. We denote the $\imath\times\imath$ identity matrix in $M_\imath(\mathbb{H})$ by $\mathbb{I}_\imath$, and the $\imath\times\imath$ zero matrix in $M_\imath(\mathbb{H})$ by $0_\imath$ for each $\imath\in\mathbb{N}^+$.

For each matrix $E$, we denote the transpose of $E$ by $E^T$.

For each $\imath\in\mathbb{N}^+$, we say that $A\in M_\imath(\mathbb{H})$ is invertible, if there exists a matrix $B\in M_\imath(\mathbb{H})$, such that $AB=BA=\mathbb{I}_\imath$.

For each $n,m\in\mathbb{N}^+$, $A=\{a_{\imath,\jmath}\}_{n\times m}\in M_{n\times m}(\mathbb{H})$, and $q\in\mathbb{H}$, we set
\begin{equation*}
qA:=\{q\cdot a_{\imath,\jmath}\}_{n\times m}\qquad\mbox{and}\qquad Aq:=\{a_{\imath,\jmath}\cdot q\}_{n\times m}.
\end{equation*}

\begin{prop}\label{pr-abi}
	(\cite[Proposition 4.1]{Zhang1997001}) Let $\imath\in\mathbb{N}^+$, and $A,B\in M_\imath(\mathbb{H})$. If $AB=\mathbb{I}_\imath$, then $BA=\mathbb{I}_\imath$.
\end{prop}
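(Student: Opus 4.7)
The plan is to reduce the claim to the corresponding fact over a commutative field by means of a faithful matrix representation of the quaternions. Concretely, I would use the standard complex representation
\[
\chi_1:\mathbb{H}\to M_2(\mathbb{C}),\qquad a+bi+cj+dk\mapsto\begin{pmatrix} a+bi & c+di \\ -c+di & a-bi\end{pmatrix},
\]
which is a unital, injective ring homomorphism (viewing $\mathbb{H}$ as a right $\mathbb{C}_i$-module and letting $\chi_1(q)$ act by left multiplication). Applying $\chi_1$ entrywise yields a unital ring monomorphism
\[
\chi:M_\imath(\mathbb{H})\to M_{2\imath}(\mathbb{C}),
\]
sending $\mathbb{I}_\imath$ to $\mathbb{I}_{2\imath}$.

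Given the hypothesis $AB=\mathbb{I}_\imath$, apply $\chi$ to obtain $\chi(A)\chi(B)=\mathbb{I}_{2\imath}$ in $M_{2\imath}(\mathbb{C})$. Over the commutative field $\mathbb{C}$, a one-sided inverse of a square matrix is automatically a two-sided inverse (for instance, because $\chi(A)\chi(B)=\mathbb{I}_{2\imath}$ forces $\det\chi(A)\cdot\det\chi(B)=1$, so $\det\chi(A)\ne 0$; hence $\chi(A)$ is invertible and $\chi(B)$ must equal $\chi(A)^{-1}$, giving $\chi(B)\chi(A)=\mathbb{I}_{2\imath}$). Rewriting this as $\chi(BA)=\chi(\mathbb{I}_\imath)$ and invoking injectivity of $\chi$ yields $BA=\mathbb{I}_\imath$.

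The two things to justify carefully are that $\chi_1$ really is a ring homomorphism and that it is injective; both are immediate from the definition, though one should note that one must fix a convention (left vs.\ right module structure) to make the arrows go the right way. Apart from this bookkeeping there is no substantive obstacle: the whole content of the proposition is the standard passage from a non-commutative ring to a commutative one via a faithful representation, plus the elementary linear-algebra fact that one-sided inverses of square matrices over a field are two-sided. (As an alternative entirely internal to the paper, one could cite the reference \cite{Zhang1997001} directly, which is what the authors do.)
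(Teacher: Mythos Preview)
Your argument is correct: the complex representation $\chi_1:\mathbb{H}\hookrightarrow M_2(\mathbb{C})$ induces, by applying it blockwise, a unital injective ring homomorphism $\chi:M_\imath(\mathbb{H})\hookrightarrow M_{2\imath}(\mathbb{C})$, and the result then follows from the elementary fact that a one-sided inverse of a square matrix over a field is two-sided.

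As you yourself observe, the paper does not actually supply a proof here; the proposition is stated as a quotation of \cite[Proposition~4.1]{Zhang1997001} and is used as a black box (specifically to justify the formula for $\mathcal{M}(J^{(1)})^{-1}$ in the proof of Theorem~\ref{th-dere}). So there is no ``paper's own proof'' to compare against---your write-up fills in precisely the argument that the authors delegate to the reference, and it is in fact the same representation-theoretic reduction that Zhang uses.
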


For each $N\in\mathbb{N}^+$ and $J=\{J_{\imath,\jmath}\}_{2^N\times N}\in M_{2^N\times N}(\mathbb{S})$, we denote the $\imath$-th row vector of $J$ by
\begin{equation*}
J_\imath:=(J_{\imath,1},J_{\imath,2},...,J_{\imath,N})\in\mathbb{S}^N,\qquad\forall\ \imath\in\{1,2,...,2^N\}.
\end{equation*}
We set
\begin{equation*}
J^{(l)}:=\{J_{\imath,\jmath}\}_{2^l\times l}\qquad\mbox{and}\qquad J_\imath^{(l)}:=(J_{\imath,1},J_{\imath,2},...,J_{\imath,l})
\end{equation*}
for each $l\in\{1,2,...,N\}$ and $\imath\in\{1,2,...,2^N\}$. We define a map
\begin{equation*}
\mathcal{M}:\bigsqcup_{\imath\in\mathbb{N}^+}M_{2^\imath\times\imath}(\mathbb{S})\rightarrow \bigsqcup_{\imath\in\mathbb{N}^+}M_{2^\imath}(\mathbb{H}),
\end{equation*}
by
\begin{equation*}
\mathcal{M}(K):=(\zeta(K_1)^T,\zeta(K_2)^T,...,\zeta(K_{2^\jmath})^T)^T
\end{equation*}
for each $\jmath\in\{1,2,...,N\}$ and $K\in M_{2^\jmath\times\jmath}(\mathbb{S})$.

\begin{defn}\label{df-fsr}
	Let $N\in\mathbb{N}^+$ and $J=(J_{\imath,\jmath})_{2^N\times N}\in M_{2^N\times N}(\mathbb{S})$. We call $J$ has full slice-rank, if $\mathcal{M}(J^{(\imath)})$ is invertible for each $\imath\in\{1,2,...,N\}$.
\end{defn}

\begin{lem} \label{le-ct} 
	Let $N\in\mathbb{N}^+$ and $K=(K_1,K_2,...,K_N)\in\mathbb{S}^N$. Then
	\begin{equation*}
	K_N\zeta(K)=\zeta(K)\sigma_N,
	\end{equation*}
	where $$K_N\zeta(K)=(K_NK(1),K_NK(2),...,K_NK(2^N)),$$  $$\sigma_N:=(\sigma^{(N)}_{\imath,\jmath})_{2^N\times 2^N}\in M_{2^N}(\mathbb{R}),$$  and
	\begin{equation*}
	\sigma_{\imath,\jmath}^{(N)}:=\left\{
	\begin{split}
	&(-1)^{N+\jmath},\qquad&&\imath+\jmath=2^N+1,\\
	&0,&&otherwise,\\
	\end{split}\right.
	\end{equation*}
	for each $\imath,\jmath\in\{1,2,...,2^N\}$.
\end{lem}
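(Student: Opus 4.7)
The plan is to prove the identity by induction on $N \in \mathbb{N}^+$.

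\textbf{Base case.} For $N=1$ we have $\zeta(K_1) = (1, K_1)$ and $\sigma_1 = \left(\begin{smallmatrix} 0 & -1 \\ 1 & 0 \end{smallmatrix}\right)$; using $K_1^2 = -1$, a direct computation gives $K_1 \zeta(K_1) = (K_1, -1) = \zeta(K_1) \sigma_1$.

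\textbf{Inductive step.} Assume the identity for $N$, and take $K' = (K_1, \ldots, K_{N+1}) \in \mathbb{S}^{N+1}$, with $K = (K_1, \ldots, K_N)$. I would establish two structural recursions. First, splitting $m \in \{1, \ldots, 2^{N+1}\}$ according to whether the top bit $m_{N+1}$ of the binary expansion of $m-1$ is $0$ or $1$, the definition (\ref{eq-im}) of $K'(m)$ factors out $(K_{N+1}K_N)^{m_{N+1}}$, which yields the block decomposition
\begin{equation*}
\zeta(K') = \bigl(\zeta(K),\; K_{N+1} K_N \,\zeta(K)\bigr).
\end{equation*}
Second, by inspecting the anti-diagonal pattern of $\sigma_{N+1}$ in $2^N \times 2^N$ blocks, one checks that the two diagonal blocks vanish while the top-right and bottom-left blocks are both anti-diagonal; comparing the signs $(-1)^{(N+1)+\jmath}$ against those of $\sigma_N$ produces
\begin{equation*}
\sigma_{N+1} = \begin{pmatrix} 0_{2^N} & -\sigma_N \\ -\sigma_N & 0_{2^N} \end{pmatrix}.
\end{equation*}

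With these two facts, the inductive step is a short manipulation: on one hand,
\begin{equation*}
K_{N+1}\zeta(K') = \bigl(K_{N+1}\zeta(K),\, K_{N+1}^2 K_N \zeta(K)\bigr) = \bigl(K_{N+1}\zeta(K),\, -K_N\zeta(K)\bigr),
\end{equation*}
using $K_{N+1}^2 = -1$; on the other hand, using the block form of $\sigma_{N+1}$, the inductive hypothesis $K_N \zeta(K) = \zeta(K)\sigma_N$, and $K_N^2 = -1$, one gets
\begin{equation*}
\zeta(K')\sigma_{N+1} = \bigl(-K_{N+1} K_N \zeta(K)\sigma_N,\, -\zeta(K)\sigma_N\bigr) = \bigl(K_{N+1}\zeta(K),\, -K_N\zeta(K)\bigr).
\end{equation*}
The two agree, closing the induction.

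\textbf{Main obstacle.} The quaternionic manipulations using $K_i^2=-1$ are routine; the real care is in the second structural recursion, namely checking the block identification $\sigma_{N+1} = \left(\begin{smallmatrix} 0 & -\sigma_N \\ -\sigma_N & 0 \end{smallmatrix}\right)$. One must track the sign $(-1)^{(N+1)+\jmath}$ as a global column index $\jmath$ is shifted by $2^N$ to a block-local index, and use that $2^N$ is even for $N\geq 1$ to confirm that the shifted sign is precisely $-(-1)^{N+\jmath'}$, i.e.\ the negative of the corresponding entry of $\sigma_N$, in both the upper-right and lower-left blocks. Once this bookkeeping is in place, the rest of the argument is essentially forced.
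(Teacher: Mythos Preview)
Your proof is correct but takes a different route from the paper. The paper argues directly, entry by entry: for each $m$ it manipulates the product
\[
K_N K(m)=K_N\prod_{\imath=N}^1 (K_\imath K_{\imath-1})^{m_\imath}
\]
using $K_\imath^2=-1$ and the observation that the binary digits of $(2^N+1-m)-1$ are the complements of those of $m-1$, to obtain $K_N K(m)=K(2^N+1-m)(-1)^{N+m}$; this is precisely the columnwise content of $K_N\zeta(K)=\zeta(K)\sigma_N$. Your inductive argument instead isolates two structural recursions, $\zeta(K')=(\zeta(K),\,K_{N+1}K_N\zeta(K))$ and the block form $\sigma_{N+1}=\left(\begin{smallmatrix}0&-\sigma_N\\-\sigma_N&0\end{smallmatrix}\right)$, and combines them with $K_{N+1}^2=K_N^2=-1$. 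The paper's approach is shorter and entirely self-contained, while yours makes the recursive structure of $\zeta$ and $\sigma_N$ explicit and reusable; both ultimately encode the same combinatorial identity on the binary index, just organized differently.
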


\begin{proof}
	For each $m\in\{1,2,...,2^N\}$, let $(m_N m_{N-1} ... m_1)_2$ be the binary number of $m-1$. If $(m'_N m'_{N-1} ... m'_1)_2$ is the binary number of $(2^N+1-m)-1$, then
	\begin{equation*}
	m'_\imath=1-m_\imath,\qquad\forall\ \imath\in\{1,2,...,N\}.
	\end{equation*}
	We notice that
	\begin{equation*}
	(-1)^m=(-1)^{m_1-1},
	\end{equation*}
	and according to (\ref{eq-im}), it follows that
	\begin{equation*}
	\begin{split}
	K_NK(m)=&K_N\prod_{\imath=N}^1 (K_\imath K_{\imath-1})^{m_\imath}
	\\=&K_N^{m_N+1}\prod_{\imath={N-1}}^1(K_\imath^{m_\imath}K_\imath^{m_{\imath+1}})
	\\=&\prod_{\imath=N}^1(K_\imath^{m_\imath+1}K_{\imath-1}^{m_\imath+1})(-1)^{N-1}
	\\=&\prod_{\imath=N}^1(K_\imath^{1-m_\imath}K_{\imath-1}^{1-m_\imath})(-1)^{N-1}(-1)^{m_1}
	\\=&\prod_{\imath=N}^1(K_\imath^{m'_\imath}K_{\imath-1}^{m'_\imath})(-1)^{N+(m_1-1)}
	\\=&K(2^N+1-m)(-1)^{N+m},
	\end{split}
	\end{equation*}
	where $K_0=1$. Thence
	\begin{equation*}
	\begin{split}	
	K_N\zeta(K)=&(K_NK(1),K_NK(2),...,K_NK(2^N))\\=&(a_1,a_2,...,a_{2^N})\\=&\zeta(K)\sigma_N,
	\end{split}
	\end{equation*}
	where
	\begin{equation*}
	\begin{split}
	a_\jmath:=&K_N K(\jmath)=K(2^N+1-\jmath)(-1)^{N+\jmath}\\=&K(2^N+1-\jmath)\sigma^{(N)}_{2^N+1-\jmath,\jmath}\\=&\sum_{\imath=1}^NK(\imath)\sigma^{(N)}_{\imath,\jmath}
	\end{split}
	\end{equation*}
	 for each $\jmath\in\{1,2,...,2^N\}$.
\end{proof}

\begin{defn}
	A complex structure on a real vector space $V$ is a automorphism $L:V\rightarrow V$ that squares to minus the identity: $L\circ L=-\Id$.
\end{defn}

\begin{rmk}\label{rm-cs}
	Let $N\in\mathbb{N}^+$, we define a morphism
	\begin{equation*}
	L:\mathbb{H}^{2^N}\rightarrow\mathbb{H}^{2^N},\qquad q\mapsto\sigma_N q.
	\end{equation*}
	We notice that
	\begin{equation*}
	L(x)=\sigma_N\sigma_N q=-\mathbb{I}_{2^N} q=-q,\qquad\forall\ x\in\mathbb{H}^{2^N}.
	\end{equation*}
	It is clear that $L$ is a complex structure on $\mathbb{H}^{2^N}$.
\end{rmk}

\begin{prop} \label{pr-ct}
	Let $N\in\mathbb{N}^+$ and $J=(J_{\imath,\jmath})_{2^N\times N}\in M_{2^N\times N}(\mathbb{S})$. If $\mathcal{M}(J)$ is invertible, then
	\begin{equation*}
	\sigma_N \mathcal{M}(J)^{-1}=\mathcal{M}(J)^{-1} D_N(J),
	\end{equation*}
	where
	\begin{equation*}
	D_N(J):=\diag(J_{1,N},J_{2,N},...,J_{2^N,N})\in M_{2^N}(\mathbb{S}).
	\end{equation*}
\end{prop}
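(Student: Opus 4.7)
The plan is to rewrite the desired identity as an intertwining relation $D_N(J)\mathcal{M}(J)=\mathcal{M}(J)\sigma_N$ and verify it row by row using Lemma \ref{le-ct}, then invoke the invertibility of $\mathcal{M}(J)$ to move the factors around. Since $\sigma_N$ has real entries, multiplication by $\sigma_N$ on either side of a quaternionic matrix is unambiguous, so the usual manipulations go through without the non-commutativity causing trouble.

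More precisely, I would first note that by the definition of $\mathcal{M}(J)$, the $\imath$-th row of $\mathcal{M}(J)$ is $\zeta(J_\imath)$. Left-multiplying a $2^N\times 2^N$ quaternionic matrix by the diagonal matrix $D_N(J)=\diag(J_{1,N},\ldots,J_{2^N,N})$ multiplies the $\imath$-th row on the left by $J_{\imath,N}$; right-multiplying by $\sigma_N$ acts on each row individually. Hence the $\imath$-th rows of $D_N(J)\mathcal{M}(J)$ and $\mathcal{M}(J)\sigma_N$ are, respectively, $J_{\imath,N}\zeta(J_\imath)$ and $\zeta(J_\imath)\sigma_N$. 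By Lemma \ref{le-ct} applied to $K=J_\imath$, these coincide for every $\imath$, so
\begin{equation*}
D_N(J)\mathcal{M}(J)=\mathcal{M}(J)\sigma_N.
\end{equation*}

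Given that $\mathcal{M}(J)$ is assumed invertible, I would then multiply this identity on the left by $\mathcal{M}(J)^{-1}$ and on the right by $\mathcal{M}(J)^{-1}$ (using Proposition \ref{pr-abi} to guarantee that a left inverse is also a right inverse) to obtain $\sigma_N\mathcal{M}(J)^{-1}=\mathcal{M}(J)^{-1}D_N(J)$, which is the claim.

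Essentially there is no real obstacle; the only care needed is in the order of quaternionic multiplications, and the key observation is simply that Lemma \ref{le-ct} was set up exactly to furnish the row-by-row intertwining relation that makes this matrix identity work, after which invertibility is the only remaining ingredient.
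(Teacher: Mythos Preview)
Your proof is correct and follows essentially the same route as the paper: establish the intertwining relation $D_N(J)\mathcal{M}(J)=\mathcal{M}(J)\sigma_N$ row by row via Lemma~\ref{le-ct}, then conjugate by $\mathcal{M}(J)^{-1}$. Your explicit remarks that $\sigma_N$ has real entries and that Proposition~\ref{pr-abi} ensures the two-sided inverse are helpful clarifications the paper leaves implicit.
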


\begin{proof}
	According to Lemma \ref{le-ct},
	\begin{equation*}
	\begin{split}
	D_N(J)\mathcal{M}(J)=&(J_{1,N}\zeta(J_1)^T,J_{2,N}\zeta(J_2)^T,...,J_{2^N,N}\zeta(J_{2^N})^T)^T
	\\=&((\zeta(J_1)\sigma_N)^T,(\zeta(J_2)\sigma_N)^T,...,(\zeta(J_{2^N})\sigma_N)^T)^T
	\\=&(\zeta(J_1)^T,\zeta(J_2)^T,...,\zeta(J_{2^N})^T)^T\sigma_N
	\\=&\mathcal{M}(J)\sigma_N.
	\end{split}
	\end{equation*}
	Then,
	\begin{equation*}
	\begin{split}
	\sigma_N\mathcal{M}(J)^{-1}=&\mathcal{M}(J)^{-1}\mathcal{M}(J)\sigma_N\mathcal{M}(J)^{-1}
	\\=&\mathcal{M}(J)^{-1}D_N(J)\mathcal{M}(J)\mathcal{M}(J)^{-1}
	\\=&\mathcal{M}(J)^{-1}D_N(J).
	\end{split}
	\end{equation*}
\end{proof}

\section{Representation formula over slice-domains of regularity}

The representation formula is a key result in the theory of slice regular function. It is introduced in \cite{Colombo2010001} and then extended to a general formula in \cite{Colombo2009001}. In this section, we will prove a representation formula over slice-domains of regularity (see Theorem \ref{th-dere}), which is an earlier version of
\cite[Theorem 3.2]{Colombo2009001}. In particular, our representation formula is the same as the classical one, when the slice-domain of regularity is an axially symmetric slice domain in $\mathbb{H}$ (see Remark \ref{rm-sdr}).

Let $\mathcal{G}$ be a slice-domain over $\mathbb{H}$ with distinguished point, $N\in\mathbb{N}^+$ and $\gamma=(\gamma_1,\gamma_2,...,\gamma_N)$ be an $N$-part path in $\mathbb{C}$ (resp. $\mathbb{H}$ or $\mathcal{G}$). For each $t\in[0,1]$, we define a finite-part path $\gamma[t]$ in $\mathbb{C}$ (resp. $\mathbb{H}$ or $\mathcal{G}$) by
\begin{equation*}
\gamma[t]:=\left\{
\begin{split}
&(\gamma_1,\gamma_2,...,\gamma_{\lfloor Nt\rfloor},\gamma_{(Nt)}),\quad&&t\in[0,1),\\
&\gamma,&&t=1,\\
\end{split}\right.
\end{equation*}
where $\gamma_{(Nt)}$ is the path in $\mathbb{C}$ (resp. $\mathbb{H}$ or $\mathcal{G}$), defined by
\begin{equation*}
\gamma_{(Nt)}(s):=\left\{
\begin{split}
&\gamma_{\lfloor Nt+1\rfloor}((Nt-\lfloor Nt\rfloor)s),\quad&&Nt\neq\lfloor Nt\rfloor,\\
&\gamma(t),&&otherwise,\\
\end{split}\right.\qquad\forall\ s\in[0,1].
\end{equation*}
Let $\gamma[t^-]$ be a finite-part path in $\mathbb{C}$ (resp. $\mathbb{H}$ or $\mathcal{G}$), defined by
\begin{equation*}
\gamma[t^-]:=\left\{
\begin{split}
&(\gamma_1,\gamma_2,...,\gamma_{tN}),\quad&&t\in\{\frac{1}{N},\frac{2}{N},...,\frac{N}{N}\},\\
&\gamma[t],&&t\in[0,1]\backslash\{\frac{1}{N},\frac{2}{N},...,\frac{N}{N}\}.\\
\end{split}\right.
\end{equation*}

\begin{thm}(Representation Formula)\label{th-dere}
	Let $N\in\mathbb{N}^+$, $J\in M_{2^N\times N}(\mathbb{S})$ with full slice-rank, $\mathcal{G}=(G,\pi,x_0)$ be a slice-domain of regularity with $\pi(x_0)\in\mathbb{R}$, and $\gamma$ be an $N$-part path in $\mathbb{C}$. If
	\begin{equation}\label{eq-rj}
	\gamma^{J_\imath}\prec\mathcal{G},\qquad\imath=1,2,...,2^N,
	\end{equation}
	then
	\begin{equation*}
	\gamma^K\prec\mathcal{G},\qquad\forall\ K\in\mathbb{S}^N.
	\end{equation*}
	
	Moreover, if $f$ is a slice regular function on $G$, and $\mathcal{G}$ is a slice-domain of existence of $f$ with respect to $\mathcal{G}$, then
	\begin{equation*}
	f(\gamma^K_{\mathcal{G}}(1))=\zeta(K)\mathcal{M}(J)^{-1}f(\gamma^J_{\mathcal{G}}(1)),
	\end{equation*}
	where
	\begin{equation*}
	f(\gamma^J_{\mathcal{G}}(1)):=(f(\gamma^{J_1}_{\mathcal{G}}(1)),f(\gamma^{J_2}_{\mathcal{G}}(1)),...,f(\gamma^{J_{2^N}}_{\mathcal{G}}(1)))^T.
	\end{equation*}
\end{thm}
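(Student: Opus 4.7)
The plan is to prove the theorem by strong induction on $N$. The base case $N=1$ is essentially the classical representation formula, which I would derive as follows: since $\mathcal{G}$ is a slice-domain of regularity, Theorem \ref{th-sdre} gives that it is real Euclidean; using this together with the extension formula (Proposition \ref{pr-xfh}) applied to tubular neighborhoods of the two given lifted paths in $\mathbb{C}_{J_1}$ and $\mathbb{C}_{J_2}$, I can produce a slice regular extension of $f$ on a real-Euclidean slice-open set containing the $K$-lift of $\gamma$ for every $K\in\mathbb{S}$. A finite cover of $[0,1]$ (by compactness) gluing these local extensions via Proposition \ref{pr-ul} gives the global lift $\gamma^K_\mathcal{G}$, and formula (\ref{eq-ex}) of Proposition \ref{pr-xfh} is exactly the $N=1$ specialisation of the desired identity.

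For the inductive step I write $\gamma=(\gamma',\gamma_N)$ with $\gamma'=(\gamma_1,\ldots,\gamma_{N-1})$ and apply the induction hypothesis to $\gamma'$ and $J^{(N-1)}$; the latter has full slice-rank by Definition \ref{df-fsr} and the first $2^{N-1}$ given lifts supply the required data. This produces lifts ${\gamma'}^{K'}_\mathcal{G}$ for every $K'\in\mathbb{S}^{N-1}$ together with the intermediate formula for $f({\gamma'}^{K'}_\mathcal{G}(1))$. For each $K=(K',K_N)\in\mathbb{S}^N$, the desired lift of $\gamma$ decomposes as the $K'$-lift of $\gamma'$ followed by the $K_N$-lift of $\gamma_N$ starting at the real-point endpoint $y(K'):={\gamma'}^{K'}_\mathcal{G}(1)$; by Proposition \ref{pr-tsh} the rebased triple $(G,\pi,y(K'))$ is again a slice-domain of regularity, reducing the existence question to the base case applied on this rebased domain.

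The main obstacle is that the base case for $\gamma_N$ at $y(K')$ demands two lifts of $\gamma_N$ in distinct $\mathbb{S}$-directions from $y(K')$, yet the given data only supplies lifts of $\gamma_N$ from the specific intermediate endpoints $y(J_\imath^{(N-1)})$, which in general differ from $y(K')$. My strategy here is to use $\mathcal{G}$'s slice-domain-of-existence property (Theorem \ref{th-exsd} and Proposition \ref{pr-sdh}): construct a candidate slice-domain $\widetilde{\mathcal{G}}$ formed from $\mathcal{G}$ by adjoining virtual $K$-lifts for every $K\in\mathbb{S}^N$ together with the candidate values $\zeta(K)\mathcal{M}(J)^{-1}f(\gamma^J_\mathcal{G}(1))$, verify that this candidate extends $f$ slice regularly, and conclude by the slice-domain-of-existence property that $\widetilde{\mathcal{G}}\prec\mathcal{G}$, so the virtual lifts already sit inside $\mathcal{G}$. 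The slice regularity verification amounts to checking compatibility at intermediate real points, using the block-recursive identity $\zeta(K)=(\zeta(K^{(N-1)}),\,K_NK_{N-1}\zeta(K^{(N-1)}))$ together with the intertwining relations of Lemma \ref{le-ct} and Proposition \ref{pr-ct}; the algebraic content of these relations is precisely that multiplication by $K_N$ on $\zeta(K)$ can be transported through $\mathcal{M}(J)^{-1}$ so that the composite formula coming from the IH on $\gamma'$ and the base case on $\gamma_N$ collapses to the single expression $\zeta(K)\mathcal{M}(J)^{-1}$, yielding simultaneously the existence of all $K$-lifts and the displayed formula.
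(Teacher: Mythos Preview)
Your organisation differs from the paper's: instead of induction on $N$, the paper runs a single open--closed argument along the path parameter $t\in[0,1]$, defining a set $A\subset[0,1]$ of parameters at which all $K$-lifts exist locally and the formula holds (with $2^{\lceil Nt\rceil}$ data points), and showing $A=[0,1]$ by contradiction. The transitions at $t=k/N$ are handled by a block identity relating $\mathcal{M}(J^{(k+1)})$ to $\mathcal{M}(J^{(k)})$.

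Your base case is essentially that argument specialised to $N=1$; it is not ``the classical representation formula'' since $\mathcal{G}$ is a Riemann slice-domain, and the compactness-plus-gluing sketch already needs the existence property at each local step to force the extension produced by Proposition~\ref{pr-xfh} back into $G$. So there is no saving there.

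The genuine gap is in the inductive step. You correctly identify the obstacle: after the induction hypothesis places you at $y(K')$ for arbitrary $K'\in\mathbb{S}^{N-1}$, you have no lifts of $\gamma_N$ from $y(K')$; the hypothesis only supplies lifts from the specific points $y(J_\imath^{(N-1)})$, which need not equal $y(K')$ in $G$ even though they all project to the same real number. Your proposed fix---``adjoin virtual $K$-lifts together with the candidate values $\zeta(K)\mathcal{M}(J)^{-1}f(\gamma^J_\mathcal{G}(1))$''---is not a well-defined construction: you cannot adjoin a path with an endpoint value and obtain a Riemann slice-domain carrying a slice regular function. What is actually needed is, at each point along $\gamma_N$, to define a holomorphic candidate on a disc in $\mathbb{C}_{K_N}$ from the $2^N$ known $J_\imath$-values, verify holomorphicity via Lemma~\ref{le-ct}/Proposition~\ref{pr-ct}, and invoke Proposition~\ref{pr-sdh} to force it into $G$. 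That is precisely the paper's local step (its computation~(\ref{eq-rg}) and surrounding argument), and to propagate it along all of $\gamma_N$ you need the same open--closed argument again---so the induction buys nothing. The junction compatibility at $t=(N-1)/N$ (that the $2^N$-point formula collapses on $\mathbb{R}$ to the $2^{N-1}$-point formula) is exactly the paper's transition step, which uses $\zeta(K^{(N)})(\mathbb{I}_{2^{N-1}},0_{2^{N-1}})^T=\zeta(K^{(N-1)})$; you allude to this but do not carry it out.

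In short, the ingredients you list are the right ones, but the inductive packaging creates an artificial obstacle whose honest resolution reproduces the paper's incremental path argument; running that argument once along $[0,1]$ is more direct.
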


\begin{proof}
	Since $\mathcal{G}$ is a slice-domain of regularity, there exists a nonempty subset $W$ of $\mathcal{SR}(G)$, such that for each $h\in W$, $\mathcal{G}$ is a slice-domain of existence of $h$ with respect to $\mathcal{G}$.
	
	\textbf{1.} Fix $f\in W$, and let $A$ be a subset of $[0,1]$, such that $t\in A$, if and only if the following properties hold:
	
	(a). $t\in[0,1]$ and $\gamma^I[t]\prec\mathcal{G}$ for each $I\in\mathbb{S}^N$.
	
	(b). If $t\neq 0$, then there exist a positive real number $r>0$, and domains $U^I$ in $G_{I_{N_t}}$ for each $I\in\mathbb{S}^N$ with $\gamma_{\mathcal{G}}^I[t](1)\in U^I$ and
	\begin{equation*}
	\pi|_{U^I}:U^I\rightarrow P_{I_{N_t}}(B_\mathbb{C}(\gamma(t),r))
	\end{equation*}
	being a homeomorphism with respect to topologies $\tau(G_{I_{N_t}})$ and $\tau(\mathbb{C}_{I_{N_t}})$, such that
	\begin{equation}\label{eq-re1}
	f\circ\pi|_{U^K}^{-1}(x+yK_{N_t})=\zeta(K^{(N_t)})\mathcal{M}(J^{(N_t)})^{-1}F(x+yJ^{(N_t)})
	\end{equation}
	for each $K\in\mathbb{S}^N$, and $x,y\in\mathbb{R}$ with $y\ge 0$ and $x+yi\in B_{\mathbb{C}}(\gamma(t),r)$, where
	\begin{equation*}
	N_t:=\lceil Nt\rceil
	\end{equation*}
	and
	\begin{equation*}
	F(x+yJ^{(N_t)}):=\left(
	\begin{matrix}
	&f\circ\pi|_{U^{J_1}}^{-1}(x+yJ_{1,N_t})
	\\&f\circ\pi|_{U^{J_2}}^{-1}(x+yJ_{2,N_t})
	\\&\vdots
	\\&f\circ\pi|_{U^{J_{2^{N_t}}}}^{-1}(x+yJ_{2^{N_t},N_t})
	\end{matrix}\right).
	\end{equation*}
	
	(c). If $\{Nt\}=0$ and $t\neq 1$, then there exist a positive real number $r'$ and domains $V^I$ in $G_{I_{N_t+1}}$ for each $I\in\mathbb{S}^N$ with $\gamma_{\mathcal{G}}^I[t](1)\in V^I$ and
	\begin{equation*}
	\pi|_{V^I}:V^I\rightarrow P_{I_{N_t+1}}(B_\mathbb{C}(\gamma(t),r))
	\end{equation*}
	being a homeomorphism with respect to topologies $\tau(G_{I_{(N_t+1)}})$ and $\tau(\mathbb{C}_{I_{(N_t+1)}})$, such that
	\begin{equation}\label{eq-re2}
	f\circ\pi|_{V^K}^{-1}(x+yK_{N_t+1})=\zeta(K^{(N_t+1)})\mathcal{M}(J^{(N_t+1)})^{-1}F'(x+yJ^{(N_t+1)}),
	\end{equation}
	for each $K\in\mathbb{S}^N$, and $x,y\in\mathbb{R}$ with $y\ge 0$ and $x+yi\in B_\mathbb{C}(\gamma(t),r)$, where
	\begin{equation*}
	N_t:=\lceil Nt\rceil
	\end{equation*}
	and
	\begin{equation*}
	F'(x+yJ^{(N_t+1)}):=\left(
	\begin{matrix}
	&f\circ\pi|_{V^{J_1}}^{-1}(x+yJ_{1,N_t+1})
	\\&f\circ\pi|_{V^{J_2}}^{-1}(x+yJ_{2,N_t+1})
	\\&\vdots
	\\&f\circ\pi|_{V^{J_{2^{N_t+1}}}}^{-1}(x+yJ_{2^{N_t+1},N_t+1})
	\end{matrix}\right).
	\end{equation*}
	
	We will prove that $A=[0,1]$, by contradiction. If $A\neq[0,1]$, then we set
	\begin{equation}\label{eq-ti}
	t_1:=\inf([0,1]\backslash A).
	\end{equation}

	\textbf{1). We will prove that $t_1\notin A$, in this step.}
	
	If $t_1\in A$, then $t_1\neq 1$ (if not, thus $[0,1]\subset A$, which is a contradiction). According to (a),
	\begin{equation}\label{eq-ri}
	\gamma^I[t_1]\prec\mathcal{G},\qquad\forall\ I\in\mathbb{S}^N.
	\end{equation}
	According to (b) and (c), there exist a real number $r_1>0$ and domains $U^I_1$ in $G_{I_{N_2}}$ for each $I\in\mathbb{S}^N$ with $\gamma_{\mathcal{G}}^I[t_1](1)\in U^I_1$ and
	\begin{equation*}
	\pi|_{U^I_1}:U^I_1\rightarrow P_{I_{N_2}}(B_\mathbb{C}(\gamma(t_1),r_1))
	\end{equation*}
	being a homeomorphism with respect to topologies $\tau(G_{I_{N_2}})$ and $\tau(\mathbb{C}_{I_{N_2}})$, such that
	\begin{equation}\label{eq-re2-1}
	f\circ\pi|_{U^K_{t_1}}^{-1}(x+yK_{N_2})=\zeta(K^{(N_2)})\mathcal{M}(J^{(N_2)})^{-1}F_2(x+yJ^{(N_2)}),
	\end{equation}
	for each $K\in\mathbb{S}^N$, and $x,y\in\mathbb{R}$ with $y\ge 0$ and $x+yi\in B_\mathbb{C}(\gamma(t_1),r_1)$, where
	\begin{equation*}
	N_2:=\lfloor N{t_1}+1\rfloor,
	\end{equation*}
	and
	\begin{equation*}
	F_2(x+yJ^{(N_2)}):=\left(
	\begin{matrix}
	&f\circ\pi|_{U^{J_1}_1}^{-1}(x+yJ_{1,N_2})
	\\&f\circ\pi|_{U^{J_2}_1}^{-1}(x+yJ_{2,N_2})
	\\&\vdots
	\\&f\circ\pi|_{U^{J_{2^{N_2}}}_1}^{-1}(x+yJ_{2^{N_2},N_2})
	\end{matrix}\right).
	\end{equation*}
	
	Since $\gamma$ is continuous, there exists $t_2\in(t_1,\frac{N_2}{N})$, such that
	\begin{equation*}
	\gamma([t_1,t_2])\subset B_\mathbb{C}(\gamma(t_1),r_1).
	\end{equation*}
	Then for each $t_3\in[t_1,t_2]$, there exist a real number $r_3>0$, such that
	\begin{equation*}
	B_\mathbb{C}(\gamma(t_3),r_3)\subset B_\mathbb{C}(\gamma(t_1),r_1).
	\end{equation*}
	We notice that for each $I\in\mathbb{S}^N$,
	\begin{equation*}
	\gamma^I([t_1,t_2])=P_{I_{N_2}}(\gamma([t_1,t_2]))\subset P_{I_{N_2}}(B_\mathbb{C}(\gamma(t_1),r_1)),
	\end{equation*}
	and thanks to (\ref{eq-ri}), it follows that $\gamma^I[t_3]\prec\mathcal{G}$ and
	\begin{equation*}
	U^I_3:=\pi|_{U^I_1}^{-1}(P_{I_{N_2}}(B_\mathbb{C}(\gamma(t_3),r_3)))
	\end{equation*}
	is a domain in $G_{I_{N_2}}$ with $\gamma^I_{\mathcal{G}}[t_3](1)\in U^I_3\subset U^I_1$ and
	\begin{equation*}
	\pi|_{U^I_3}:U^I_3\rightarrow P_{I_{N_2}}(B_\mathbb{C}(\gamma(t_3),r_3))
	\end{equation*}
	being a homeomorphism with respect to topologies $\tau(G_{I_{N_2}})$ and $\tau(\mathbb{C}_{I_{N_2}})$. According to (\ref{eq-re2-1}), and since $U^I_3\subset U^I_1$ for each $I\in\mathbb{S}^N$, it is clear that
	\begin{equation*}
	\begin{split}
	f\circ\pi|_{U^K_3}^{-1}(x+yK_{N_2})=&f\circ\pi|_{U^K_1}^{-1}(x+yK_{N_2})
	\\=&\zeta(K^{(N_2)})\mathcal{M}(J^{(N_2)})^{-1}F_2(x+yJ^{(N_2)})
	\\=&\zeta(K^{(N_2)})\mathcal{M}(J^{(N_2)})^{-1}F_3(x+yJ^{(N_2)})
	\end{split}
	\end{equation*}
	for each $K\in\mathbb{S}^N$, and $x,y\in\mathbb{R}$ with $y\ge 0$ and $x+yi\in B_{\mathbb{C}}(\gamma(t_3),r_3)$, where \begin{equation*}
	N_2:=\lfloor N{t_1}+1\rfloor=\lfloor N{t_3}+1\rfloor=\lceil N{t_3}\rceil,
	\end{equation*}
	and
	\begin{equation*}
	\begin{split}
	F_3(x+yJ^{(N_2)}):=&\left(
	\begin{matrix}
	&f\circ\pi|_{U^{J_1}_3}^{-1}(x+yJ_{1,N_2})
	\\&f\circ\pi|_{U^{J_2}_3}^{-1}(x+yJ_{2,N_2})
	\\&\vdots
	\\&f\circ\pi|_{U^{J_{2^{N_2}}}_3}^{-1}(x+yJ_{2^{N_2},N_2})
	\end{matrix}\right)
	\\=&\left(
	\begin{matrix}
	&f\circ\pi|_{U^{J_1}_1}^{-1}(x+yJ_{1,N_2})
	\\&f\circ\pi|_{U^{J_2}_1}^{-1}(x+yJ_{2,N_2})
	\\&\vdots
	\\&f\circ\pi|_{U^{J_{2^{N_2}}}_1}^{-1}(x+yJ_{2^{N_2},N_2})
	\end{matrix}\right)
	\\=&F_2(x+yJ^{(N_2)}).
	\end{split}
	\end{equation*}
	Then $t_3\in A$ for each $t_3\in[t_1,t_2]$. Therefore
	\begin{equation*}
	[t_1,t_2]\subset A.
	\end{equation*}
	And according to (\ref{eq-ti}), we have $[0,t_1)\subset A$. It follows that $[0,t_2]\subset A$ and
	\begin{equation*}
	t_1=\inf([0,1]\backslash A)\ge t_2>t_1,
	\end{equation*}
	which is a contradiction. So $t_1\notin A$.
	
	\textbf{2). We will prove that $t_1\neq 0$, in this step.}
	
	If $t_1=0$, then $0\notin A$ by 1). According to Theorem \ref{th-sdre}, $\mathcal{G}$ is real Euclidean. It is clear that there exist a domain $U_0$ in $G$ containing $x_0$ and a positive real number $r_0>0$, such that
	\begin{equation*}
	\pi|_{U_0}:U_0\rightarrow B_\mathbb{H}(\pi(x_0),r_0)
	\end{equation*}
	is a slice-homeomorphism. Thence $f\circ\pi|_{U_0}^{-1}$ is a slice regular function on $B_\mathbb{H}(\pi(x_0),r_0)$, and there exists $t_0\in[0,\frac{1}{N})$, such that $\gamma([0,t_0])\subset B_{\mathbb{C}}(\pi(x_0),r_0)$.
	
	Let $t=0$, we have $N_t=N_0=1$, $\zeta(K^{(1)})=(1,K_1)$ for each $K\in\mathbb{S}$, and
	\begin{equation*}
	\mathcal{M}(J^{(1)})=\left(\begin{matrix}1&J_{1,1}\\1&J_{2,1}\end{matrix}\right).
	\end{equation*}
	We notice that
	\begin{equation*}
	\left(\begin{matrix} (J_{1,1}-J_{2,1})^{-1}J_{1,1}&-(J_{1,1}-J_{2,1})^{-1}J_{2,1}\\(J_{1,1}-J_{2,1})^{-1}&-(J_{1,1}-J_{2,1})^{-1}
	\end{matrix}\right)
	\left(\begin{matrix}
	1&J_{1,1}\\1&J_{2,1}
	\end{matrix}\right)=
	\left(\begin{matrix} 1&0\\0&1
	\end{matrix}\right).
	\end{equation*}
	According to Proposition \ref{pr-abi},
	$$\mathcal{M}(J^{(1)})^{-1}=	\left(\begin{matrix} (J_{1,1}-J_{2,1})^{-1}J_{1,1}&-(J_{1,1}-J_{2,1})^{-1}J_{2,1}\\(J_{1,1}-J_{2,1})^{-1}&-(J_{1,1}-J_{2,1})^{-1}
	\end{matrix}\right).$$
	Thanks to (\ref{eq-ex}) in Proposition \ref{pr-xfh},
	\begin{equation*}
	\begin{split}
	&f\circ\pi|_{U_0}^{-1}(x+yK_1)\\=&((J_{1,1}-J_{2,1})^{-1}J_{1,1}+K_1(J_{1,1}-J_{2,1})^{-1})f\circ\pi|_{U_0}^{-1}(x+yJ_{1,1})\\&+((J_{2,1}-J_{1,1})^{-1}J_{2,1}+K_1(J_{2,1}-J_{1,1})^{-1})f\circ\pi|_{U_0}^{-1}(x+yJ_{2,1})\\
	=&(1,K_1)
	\left(\begin{matrix} (J_{1,1}-J_{2,1})^{-1}J_{1,1}&-(J_{1,1}-J_{2,1})^{-1}J_{2,1}\\(J_{1,1}-J_{2,1})^{-1}&-(J_{1,1}-J_{2,1})^{-1}
	\end{matrix}\right)
	\left(\begin{matrix}
	f\circ\pi|_{U_0}^{-1}(x+yJ_{1,1})\\f\circ\pi|_{U_0}^{-1}(x+yJ_{2,1})
	\end{matrix}\right)\\
	=&\zeta(K^{(1)})\mathcal{M}(J^{(1)})^{-1}F_0(x+yJ^{(1)})
	\end{split}
	\end{equation*}
	for each $x,y\in\mathbb{R}$ with $y\ge 0$ and $x+yi\in B_{\mathbb{C}}(\pi(x),r_0)$, where
	\begin{equation*}
	F_0(x+yJ^{(1)}):=\left(
	\begin{matrix}
	&f\circ\pi|_{V^{J_1}_0}^{-1}(x+yJ_{1,1})
	\\&f\circ\pi|_{V^{J_2}_0}^{-1}(x+yJ_{2,1})
	\end{matrix}\right)
	\end{equation*}
	and
	\begin{equation*}
	V^{J_\imath}_0:=U_0\cap G_{J_{\imath,1}},\qquad\imath=1,2.
	\end{equation*}
	We notice that $\gamma^K[0]\prec\mathcal{G}$ for each $K\in\mathbb{S}^N$, it follows that $0\in A$, which is a contradiction. Then $t_1\neq 0$.
	
	\textbf{3). We will prove that $\{Nt_1\}=0$ and $t_1\neq 1$, in this step.}
	
	According to (\ref{eq-rj}), for each $\imath\in\{1,2,...,2^{N}\}$,
	\begin{equation*}
	\gamma^{J_\imath}\prec\mathcal{G},
	\end{equation*}
	it follows that there exists a domain $V^{J_\imath}_1$ in $G_{J_\imath,N_4}$ containing $\gamma^{J_\imath}[t_1](1)$, such that
	\begin{equation*}
	\pi|_{V^{J_\imath}_1}:V^{J_\imath}_1\rightarrow\pi(V^{J_\imath}_1)
	\end{equation*}
	is a homeomorphism with respect to the subspace topology of $\tau(G_{J_{\imath,N_4}})$ and $\tau(\mathbb{C}_{J_{\imath,N_4}})$, where
	\begin{equation*}
	N_4:=\lceil Nt_1\rceil.
	\end{equation*}
	Thanks to $\gamma^{J_\imath}(t_1)\in\mathbb{C}_{J_{\imath,N_4}}$, then there exists a real number $r'_\imath>0$ such that
	\begin{equation*}
	B_{J_{\imath,N_4}}(\gamma^{J_\imath}(t_1),r'_\imath)\subset\pi(V^{J_\imath}_1),\qquad\forall\ \imath\in\{1,2,...,2^{N_4}\}.
	\end{equation*}
	We set
	\begin{equation*}
	r_4:=\min\{r'_1,r'_2,...,r'_{2^{N_4}}\}\qquad\mbox{and}\qquad\mathbb{B}_4:=B_{\mathbb{C}}(\gamma(t_1),r_4),
	\end{equation*}
	it follows that
	\begin{equation*}
	P_{J_{\imath,N_4}}(\mathbb{B}_4)\subset B_{J_{\imath,N_4}}(\gamma^{J_\imath}(t_1),r'_\imath)\subset\pi(V^{J_\imath}_1),\qquad\forall\ \imath\in\{1,2,...,2^{N_4}\}.
	\end{equation*}
	Therefore
	\begin{equation*}
	U^{J_\imath}_4:=\pi|_{V^{J_\imath}_1}^{-1}(P_{J_{\imath,N_4}}(\mathbb{B}_4))
	\end{equation*}
	is a domain in $G_{J_{\imath,N_4}}$ with
	\begin{equation*}
	\pi_{U^{J_\imath}_4}:U^{J_\imath}_4\rightarrow P_{J_{\imath,N_4}}(\mathbb{B}_4)
	\end{equation*}
	being a homeomorphism with respect to topologies $\tau(G_{J_{\imath,N_4}})$ and $\tau(\mathbb{C}_{J_{\imath,N_4}})$ for each $\imath\in\{1,2,...,2^{N_4}\}$. For each $K\in\mathbb{S}^N$, we define a function
	\begin{equation*}
	g^{(K)}:P_{K_{N_4}}(\mathbb{B}_4)\rightarrow\mathbb{H}
	\end{equation*}
	by
	\begin{equation}\label{eq-gk}
	g^{(K)}(x+yK_{N_4}):=\zeta(K^{(N_4)})\mathcal{M}(J^{(N_4)})^{-1}F_4(x+yJ^{(N_4)})
	\end{equation}
	for each $x,y\in\mathbb{R}$ with $y\ge 0$ and $x+yi\in\mathbb{B}_4$, where
	\begin{equation*}
	F_4(x+yJ^{(N_4)}):=\left(
	\begin{matrix}
	&f\circ\pi|_{U^{J_1}_4}^{-1}(x+yJ_{1,N_4})
	\\&f\circ\pi|_{U^{J_2}_4}^{-1}(x+yJ_{2,N_4})
	\\&\vdots
	\\&f\circ\pi|_{U^{J_{2^{N_4}}}_4}^{-1}(x+yJ_{2^{N_4},N_4})
	\end{matrix}\right).
	\end{equation*}
	According to Lemma \ref{le-ct} and Proposition \ref{pr-ct},
	\begin{equation}\label{eq-rg}
	\begin{split}
	&(\frac{\partial}{\partial x}+K_{N_4}\frac{\partial}{\partial y})g^{(K)}(x+yK_{N_4})
	\\=&(\frac{\partial}{\partial x}+K_{N_4}\frac{\partial}{\partial y})\zeta(K^{(N_4)})\mathcal{M}(J^{(N_4)})^{-1}F_4(x+yJ^{(N_4)})
	\\=&\zeta(K^{(N_4)})\mathcal{M}(J^{(N_4)})^{-1}\frac{\partial}{\partial x}F_4(x+yJ^{(N_4)})
	\\&+\zeta(K^{(N_4)})\sigma_N\mathcal{M}(J^{(N_4)})^{-1}\frac{\partial}{\partial y}F_4(x+yJ^{(N_4)})
	\\=&\zeta(K^{(N_4)})\mathcal{M}(J^{(N_4)})^{-1}\frac{\partial}{\partial x}F_4(x+yJ^{(N_4)})
	\\&+\zeta(K^{(N_4)})\mathcal{M}(J^{(N_4)})^{-1}D_{N_4}(J^{(N_4)})\frac{\partial}{\partial y}F_4(x+yJ^{(N_4)})
	\\=&\zeta(K^{(N_4)})\mathcal{M}(J^{(N_4)})^{-1}(\frac{\partial}{\partial x}+D_{N_4}(J^{(N_4)})\frac{\partial}{\partial y})F_4(x+yJ^{(N_4)})
	\\=&\zeta(K^{(N_4)})\mathcal{M}(J^{(N_4)})^{-1}(a_1,a_2,...,a_{N_4})^T
	\\=&0
	\end{split}
	\end{equation}
	for each $x,y\in\mathbb{R}$ with $y\ge 0$ and $x+yi\in\mathbb{B}_4$, where
	\begin{equation*}
	a_\imath:=(\frac{\partial}{\partial x}+J_{\imath,N_4}\frac{\partial}{\partial y})f\circ\pi|_{U^{J_{\imath}}_4}^{-1}(x+yJ_{\imath,N_4})=0
	\end{equation*}
	for each $\imath=1,2,...,2^{N_4}$ (by $f\circ\pi|_{U^{J_{\imath}}_4}^{-1}$ being holomorphic on $P_{J_{\imath,N_4}}(\mathbb{B}_4)$). It follows that $g^{(K)}$ is holomorphic on $P_{K_{N_4}}(\mathbb{B}_4)$. Since $\gamma$ is continuous, there exists $t_5\in(\frac{N_4-1}{N},t_1)$ such that
	\begin{equation*}
	\gamma([t_5,t_1])\subset\mathbb{B}_4.
	\end{equation*}
	According to $t_5\in A$, there exists $r_5>0$ and domains $U^I_5$ in $G_I$ for each $I\in\mathbb{S}^N$ with $\gamma^I_{\mathcal{G}}[t_5](1)\in U_5^I$ and
	\begin{equation*}
	\pi|_{U^I_5}^{-1}:U^I_5\rightarrow P_{I_{N_4}}(B_\mathbb{C}(\gamma(t_5),r_5))
	\end{equation*}
	being a homeomorphism with respect to topologies of $\tau(G_{I_{N_4}})$ and $\tau(\mathbb{C}_{I_{N_4}})$, such that
	\begin{equation*}
	f\circ\pi|_{U^K_5}^{-1}(x+yK_{N_4})=\zeta(K^{(N_4)})\mathcal{M}(J^{(N_4)})^{-1}F_5(x+yJ^{(N_4)})
	\end{equation*}
	for each $K\in\mathbb{S}^N$, and $x,y\in\mathbb{R}$ with $y\ge 0$ and $x+yi\in B_\mathbb{C}(\gamma(t_5),r_5)$, where
	\begin{equation*}
	N_4=\lceil N{t_1}\rceil=\lceil N{t_5}\rceil
	\end{equation*}
	and
	\begin{equation*}
	F_5(x+yJ^{(N_4)}):=\left(
	\begin{matrix}
	&f\circ\pi|_{U^{J_1}_5}^{-1}(x+yJ_{1,N_4})
	\\&f\circ\pi|_{U^{J_2}_5}^{-1}(x+yJ_{2,N_4})
	\\&\vdots
	\\&f\circ\pi|_{U^{J_{2^{N_4}}}_5}^{-1}(x+yJ_{2^{N_4},N_4})
	\end{matrix}\right).
	\end{equation*}
	We notice that
	\begin{equation*}
	\gamma(t_5)\in\mathbb{B}_4\cap\mathbb{B}_5,
	\end{equation*}
	then $\mathbb{B}_4\cap\mathbb{B}_5$ is a nonempty open set in $\mathbb{C}$. It follows that there exists a complex number $z\in\mathbb{B}_4\cap\mathbb{B}_5$ and a positive real number $r_6>0$, such that
	\begin{equation*}
	B_{\mathbb{C}}(z,r_6)\cap\mathbb{R}=\varnothing.
	\end{equation*}
	We set
	\begin{equation*}
	\mathbb{B}_6:=B_{\mathbb{C}}(z,r_6)\subset\mathbb{B}_4\cap\mathbb{B}_5.
	\end{equation*}
	For each $K\in\mathbb{S}^N$, according to Proposition \ref{pr-xfh} and $P_{K_{N_4}}(\mathbb{B}_4)$ is slice-connected in $\mathbb{H}$, there exists a slice-domain $V^K_4$ in $\mathbb{H}$ with $P_{K_{N_4}}(\mathbb{B}_4)\subset V^K_4$, and a slice regular function $\widetilde{g^{(K)}}$ on $V^K_4$ with
	\begin{equation*}
	\widetilde{g^{(K)}}|_{P_{K_{N_4}}(\mathbb{B}_4)}=g^{(K)}.
	\end{equation*}
	We set
	\begin{equation*}
	V^K_6:=P_{K_{N_4}}(\mathbb{B}_6),\qquad z_K:=P_{K_{N_4}}(z)\qquad\mbox{and}\qquad q_K:=\pi|_{U^K_5}^{-1}(z_K).
	\end{equation*}
	We notice that
	\begin{equation*}
	\mathcal{G}_1:=(V^K_4,id_{V^K_4},z_K),\qquad\mathcal{G}_2:=(V^K_6,id_{V^K_6},z_K)\qquad\mbox{and}\qquad\mathcal{G}_3:=(G,\pi,q_K)
	\end{equation*}
	are slice-domains over $\mathbb{H}$ with distinguished point. And
	\begin{equation*}
	\widetilde{g^{(K)}},\qquad g^{(K)}|_{V^K_6}\qquad\mbox{and}\qquad f
	\end{equation*}
	are respectively slice regular functions on $V^K_4$, $V^K_6$ and $G$. Notice that
	\begin{equation*}
	U^{J_\imath}_5\subset U^{J_\imath}_4\qquad\mbox{and}\qquad\mathbb{B}_6\subset\mathbb{B}_5,
	\end{equation*}
	we have
	\begin{equation*}
	\begin{split}
	f|_{V^K_6}(x+yK_{N_4})=&f\circ\pi|_{U^K_5}^{-1}(x+yK_{N_4})
	\\=&\zeta(K^{(N_4)})\mathcal{M}(J^{(N_4)})^{-1}F_5(x+yJ^{(N_4)})
	\\=&\zeta(K^{(N_4)})\mathcal{M}(J^{(N_4)})^{-1}F_4(x+yJ^{(N_4)})
	\\=&g^{(K)}(x+yK_{N_4})
	\end{split}
	\end{equation*}
	for each $x,y\in\mathbb{R}$ with $y\ge 0$ and $x+yi\in\mathbb{B}_6$. Then $f$ and $\widetilde{g^{(K)}}$ are slice regular extensions of $g^{(K)}|_{V^K_6}$. According to Proposition \ref{pr-tsh}, $\mathcal{G}_3$ is a slice-domain of existence of $f$. It follows that $\mathcal{G}_3$ is also a slice-domain of existence of $g^{(K)}|_{V^K_6}$. Thanks to Theorem \ref{th-exsd},
	\begin{equation*}
	\mathcal{G}_1\prec\mathcal{G}_3.
	\end{equation*}
	We denote the fiber preserving map from $\mathcal{G}_1$ to $\mathcal{G}_3$ by
	\begin{equation*}
	\varphi:V^K_4\rightarrow G.
	\end{equation*}
	And we set
	\begin{equation*}
	q_5:=P_{K_{N_4}}(\gamma(t_5))\in\mathbb{H}.
	\end{equation*}
	According to Proposition \ref{pr-sdt},
	\begin{equation*}
	(V^K_4,id_{V^K_4},q_5)\prec(G,\pi,\pi|_{U^K_5}^{-1}(q_5)).
	\end{equation*}
	Due to Proposition \ref{pr-tsp}, and since
	\begin{equation*}
	\gamma^K([t_5,t_1])\subset V^K_4,\qquad\gamma^K[t_5]\prec\mathcal{G}\qquad\mbox{and}\qquad\gamma^K[t_5](1)=\gamma^K(t_5)=q_5,
	\end{equation*}
	it follows that $\gamma^K[t_1]\prec\mathcal{G}$ for each $K\in\mathbb{S}^N$. We notice that $\varphi(V^K_4)$ is a domain in $G$ containing $\gamma^K[t_1](1)$ and
	\begin{equation*}
	U^I_4=\pi|_{U^I_4}^{-1}(P_{I_{N_4}}(\mathbb{B}_4))=\pi|_{\varphi(V^K_4)}^{-1}(P_{I_{N_4}}(\mathbb{B}_4)),\qquad\forall\ I\in\{J_1,J_2,...,J_{2^{N_4}}\}.
	\end{equation*}
	Then we can well define the domain
	\begin{equation*}
	U^K_4:=\pi|_{\varphi(V^K_4)}^{-1}(P_{K_{N_4}}(\mathbb{B}_4))
	\end{equation*}
	in $G_{K_{N_4}}$ for each $K\in\mathbb{S}^N$ with
	\begin{equation*}
	\pi|_{U^K_4}^{-1}:U^K_4\rightarrow P_{K_{N_4}}(\mathbb{B}_4)
	\end{equation*}
	being a homeomorphism with respect to topologies $\tau(G_{K_{N_4}})$ and $\tau(\mathbb{C}_{K_{N_4}})$. And we notice that
	\begin{equation*}
	{\id}_{V^K_4}=\pi\circ\varphi\qquad\mbox{and}\qquad P_{K_{N_4}}(\mathbb{B}_4)\subset V^K_4,
	\end{equation*}
	thence
	\begin{equation}\label{eq-fp}
	\varphi|_{P_{K_{N_4}}(\mathbb{B}_4)}=\pi|_{U^K_4}^{-1}.
	\end{equation}
	Thanks to (\ref{eq-gk}) and (\ref{eq-fp}), it follows that
	\begin{equation}\label{eq-rf3}
	\begin{split}
	f\circ\pi|_{U^K_4}^{-1}(x+yK_{N_4})=&f\circ\varphi(x+yK_{N_4})
	\\=&f|_{V_4^K}(x+yK_{N_4})
	\\=&g^{(K)}(x+yK_{N_4})
	\\=&\zeta(K^{(N_4)})\mathcal{M}(J^{(N_4)})^{-1}F_4(x+yJ^{(N_4)})
	\end{split}
	\end{equation}
	for each $x,y\in\mathbb{R}$ with $y\ge 0$ and $x+yi\in\mathbb{B}_4$, where
	\begin{equation*}
	F_4(x+yJ^{(N_4)}):=\left(
	\begin{matrix}
	&(f\circ\pi|_{U^{J_1}_4}^{-1}(x+yJ_{1,N_4})
	\\&f\circ\pi|_{U^{J_2}_4}^{-1}(x+yJ_{2,N_4})
	\\&\vdots
	\\&f\circ\pi|_{U^{J_{2^{N_4}}}_4}^{-1}(x+yJ_{2^{N_4},N_4})
	\end{matrix}\right).
	\end{equation*}
	It follows that conditions (a) and (b) hold for $t=t_1$. If $\{Nt_1\}\neq 0$ or $t_1=1$, then $t_1\in A$, which is a contradiction. It follows that $\{Nt_1\}=0$ and $t_1\neq 1$. We will proof that the condition (c) holds for $t=t_1$ in the next step.
	
	\textbf{4). Following 3), we will prove that the condition (c) holds for $t=t_1$.}
	
	According to 2) and 3),  $\{Nt_1\}=0$ and $t_1\notin\{0,1\}$. We notice that the center $\gamma(t_1)$ of $\mathbb{B}_4$ is in $\mathbb{R}$. Thanks to Proposition \ref{pr-tsh} and Proposition \ref{pr-xfh}, there exists a domain $U^I_7$ in $\mathcal{G}$ containing $\gamma^I_\mathcal{G}(t_1)$ for each $I\in\mathbb{S}^N$, such that
	\begin{equation*}
	\pi|_{U^I_7}^{-1}:U^I_7\rightarrow \mathbb{B}_7^I
	\end{equation*}
	is a homeomorphism with respect to topologies $\tau(G_{I_{N_4+1}})$ and $\tau(\mathbb{C}_{I_{N_4+1}})$, where
	\begin{equation*}
	\mathbb{B}_7:=B_\mathbb{H}(\gamma(t_1),r_4)\qquad\mbox{and}\qquad\mathbb{B}_7^I:=\mathbb{B}_7\cap\mathbb{C}_{I_{N_4+1}}.
	\end{equation*}
	For each $K\in\mathbb{S}^N$, we define a function $g^{(K)}_1$ on $\mathbb{B}_7^K$, by
	\begin{equation}\label{eq-dsa}
	g_1^{(K)}(x+yK_{N_4+1})=\zeta(K^{(N_4+1)})\mathcal{M}(J^{(N_4+1)})^{-1}F_7(x+yJ^{(N_4+1)})
	\end{equation}
	for each $x,y\in\mathbb{R}$ with $y\ge 0$ and $x+yi\in\mathbb{B}_4$, where
	\begin{equation*}
	F_7(x+yJ^{(N_4+1)}):=\left(
	\begin{matrix}
	&(f\circ\pi|_{U^{J_1}_7}^{-1}(x+yJ_{1,N_4+1})
	\\&f\circ\pi|_{U^{J_2}_7}^{-1}(x+yJ_{2,N_4+1})
	\\&\vdots
	\\&f\circ\pi|_{U^{J_{2^{N_4+1}}}_7}^{-1}(x+yJ_{2^{N_4+1},N_4+1})
	\end{matrix}\right).
	\end{equation*}
	We can prove that $g_1^{(K)}$ is holomorphic on $\mathbb{B}_7^K$, by direct calculation exactly as (\ref{eq-rg}). We notice that
	\begin{equation*}
	(U_4^I)_\mathbb{R}=\pi^{-1}_{U_4^I}(\mathbb{B}_4^I\cap\mathbb{R})=\pi^{-1}_{V_4^I}(B_\mathbb{R}(\gamma(t_1),r_4))=\pi^{-1}_{U_7^I}(\mathbb{B}_7^I\cap\mathbb{R})=(U_7^I)_\mathbb{R}
	\end{equation*}
	for each $I\in\mathbb{S}^N$. Then
	\begin{equation*}
	\begin{split}
	f\circ\pi|_{U_7^{J_\imath}}^{-1}(x)=f\circ\pi|_{U_4^{J_\imath}}^{-1}(x)
	=\zeta(J_\imath^{(N_4)})\mathcal{M}(J^{(N_4)})^{-1}F_4(x)
	\end{split}
	\end{equation*}
	for each $x\in\mathbb{B}_4\cap\mathbb{R}$ and $\imath\in\{1,2,...,2^{N_4+1}\}$, where
	\begin{equation*}
	F_4(x)=(f\circ\pi|_{U^{J_1}_4}^{-1}(x),f\circ\pi|_{U^{J_2}_4}^{-1}(x),...,f\circ\pi|_{U^{J_{2^{N_4}}}_4}^{-1}(x))^T.
	\end{equation*}
	Then we set
	\begin{equation}\label{eq-sla}
	\begin{split}
	F_7(x):=&(f\circ\pi|_{U^{J_1}_7}^{-1}(x),f\circ\pi|_{U^{J_2}_7}^{-1}(x),...,f\circ\pi|_{U^{J_{2^{N_4+1}}}_7}^{-1}(x))^T
	\\=&(\zeta(J_1^{(N_4)}),\zeta(J_2^{(N_4)}),...,\zeta(J_{2^{N_4+1}}^{(N_4)}))^T\mathcal{M}(J^{(N_4)})^{-1}F_4(x)
	\\=&\mathcal{M}(J^{(N_4+1)})(\mathbb{I}_{2^{N_4}},0_{2^{N_4}})^T\mathcal{M}(J^{(N_4)})^{-1}F_4(x)
	\end{split}
	\end{equation}
	for each $x\in\mathbb{B}_4\cap\mathbb{R}$. Let
	\begin{equation*}
	V_7^I:=\pi|_{U^I_7}^{-1}(\mathbb{B}_7^I)
	\end{equation*}
	be a domain in $G_{I_{N_4+1}}$ for each $I\in\mathbb{S}^N$. Then $\pi|_{V_7^I}:V_7^I\rightarrow\mathbb{B}_7^I$ is a homeomorphism with respect to topologies $\tau(G_{I_{N_4+1}})$ and $\tau(\mathbb{C}_{I_{N_4+1}})$. Thanks to (\ref{eq-rf3}), (\ref{eq-dsa}), (\ref{eq-sla}) and
	\begin{equation*}
	\pi|_{U^K_7}^{-1}(\mathbb{B}_7\cap\mathbb{R})\subset V_7^K\cap U^K_4,
	\end{equation*}
	it follows that
	\begin{equation*}
	\begin{split}
	g_1^{(K)}(x)=&\zeta(K^{(N_4+1)})\mathcal{M}(J^{(N_4+1)})^{-1}F_7(x)
	\\=&\zeta(K^{(N_4+1)})\mathcal{M}(J^{(N_4+1)})^{-1}\mathcal{M}(J^{(N_4+1)})(\mathbb{I}_{2^{N_4}},0_{2^{N_4}})^T\mathcal{M}(J^{(N_4)})^{-1}F_4(x)
	\\=&\zeta(K^{(N_4+1)})(\mathbb{I}_{2^{N_4}},0_{2^{N_4}})^T\mathcal{M}(J^{(N_4)})^{-1}F_4(x)
	\\=&\zeta(K^{(N_4)})\mathcal{M}(J^{(N_4)})^{-1}F_4(x)
	\\=&f\circ\pi|_{U^K_4}^{-1}(x)
	\\=&f\circ\pi|_{V^K_7}^{-1}(x)
	\end{split}
	\end{equation*}	
	for each $x\in\mathbb{B}_4\cap\mathbb{R}$ and $K\in\mathbb{S}^N$. According to the Identity Principle \ref{th-idh} and (\ref{eq-dsa}),
	\begin{equation*}
	\begin{split}
	f\circ\pi|_{V^K_7}^{-1}(x+yK_{N_4+1})=&g_1^{(K)}(x+yK_{N_4+1})
	\\=&\zeta(K^{(N_4+1)})\mathcal{M}(J^{(N_4+1)})^{-1}F_7(x+yJ^{(N_4+1)})
	\end{split}
	\end{equation*}
	for each $x,y\in\mathbb{R}$ with $y\ge 0$ and $x+yi\in\mathbb{B}_4$, and $K\in\mathbb{S}^N$. Then condition (c) holds when $t=t_1$. It follows that $t_1\in A$, which is a contradiction. Then $A=[0,1]$.

	\textbf{2.} Since $1\in A$ and condition (a), it follows that
	\begin{equation*}
	\gamma^K=\gamma^K[1]\prec\mathcal{G}
	\end{equation*}
	for each $K\in\mathbb{S}^N$. According to condition (b), there exists a positive real number $r_8>0$, and domains $U^I_8$ in $G_{I_N}$ for each $I\in\mathbb{S}^N$ with $\gamma^I_{\mathcal{G}}(1)=\gamma^I_{\mathcal{G}}[1](1)\in U^I_8$ and
	\begin{equation*}
	\pi|_{U^I_8}:U^I_8\rightarrow P_{I_N}(B_\mathbb{C}(\gamma(1),r_8))
	\end{equation*}
	being a homeomorphism with respect to topologies $\tau(G_{I_N})$ and $\tau(\mathbb{C}_{I_N})$, such that
	\begin{equation}\label{eq-t1r}
	f\circ\pi|_{U^K_8}^{-1}(x+yK_N)=\zeta(K)\mathcal{M}(J)^{-1}F_8(x+yJ)
	\end{equation}
	for each $K\in\mathbb{S}^N$, and $x,y\in\mathbb{R}$ with $y\ge 0$ and $x+yi\in B_\mathbb{C}(\gamma(1),r)$, where
	\begin{equation*}
	F_8(x+yJ):=(f\circ\pi|_{U^{J_1}_8}^{-1}(x+yJ_{1,N}),f\circ\pi|_{U^{J_2}_8}^{-1}(x+yJ_{2,N}),...,f\circ\pi|_{U^{J_{2^N}}_8}^{-1}(x+yJ_{2^N,N}))^T.
	\end{equation*}
	We write
	\begin{equation*}
	\gamma(1)=x_1+y_1 i,
	\end{equation*}
	for some $x_1,y_1\in\mathbb{R}$. Then
	\begin{equation*}
		\pi(\gamma^I_{\mathcal{G}}(1))=\gamma^I(1)=P_{I_N}(\gamma(1))=x_1+y_1 I_N,\qquad\forall\ I\in\mathbb{S}^N.
	\end{equation*}
	According to (\ref{eq-t1r}) and $\gamma^K_{\mathcal{G}}(1)\in U^K_8$,
	\begin{equation*}
	\begin{split}
	f(\gamma^K_\mathcal{G}(1))=&f\circ\pi|_{U_8^I}^{-1}(x_1+y_1 K_N)
	\\=&\zeta(K)\mathcal{M}(J)^{-1}F_8(x_1+y_1J)
	\\=&\zeta(K)\mathcal{M}(J)^{-1}(b_1,b_2,...,b_{2^N})^T
	\\=&\zeta(K)\mathcal{M}(J)^{-1}(f(\gamma^{J_1}_\mathcal{G}(1)),f(\gamma^{J_2}_\mathcal{G}(1)),...,f(\gamma^{J_{2^N}}_\mathcal{G}(1)))^T
	\\=&\zeta(K)\mathcal{M}(J)^{-1}f(\gamma^{J}_\mathcal{G}(1))
	\end{split}
	\end{equation*}
	for each $K\in\mathbb{S}^N$, where
	\begin{equation*}
	b_\imath=f\circ\pi|_{U^{J_\imath}_8}^{-1}(x_1+y_1 J_{\imath,N}),\qquad\forall\ \imath\in\{1,2,...,2^N\}.
	\end{equation*}
\end{proof}

Finally, we demonstrate that our representation formula is the same as the classical one over quaternions \cite[Theorem 3.2]{Colombo2009001} when the slice-domain of regularity is an axially symmetric domain.

\begin{defn}
	(\cite[Definition 2.25]{Colombo2010001}) Let $\Omega\subset\mathbb{H}$. We say that $\Omega$ is axially symmetric if, for all $x+yI\in\Omega$, the whole $2$-sphere $x+y\mathbb{S}$ is contained in $\Omega$.
\end{defn}

\begin{rmk}\label{rm-sdr}
	Let $\mathcal{G}=(G,\pi,x_0)$ be a slice-domain of regularity with $\pi(x_0)\in\mathbb{R}$, and $f$ be a slice regular function on $G$ with $\mathcal{G}$ being a slice-domain of existence of $f$. If $G$ is axially symmetric slice domain in $\mathbb{H}$, then for each $q=x+yK\in G$ and $\pi={\id}_G$, then there exists a path $\gamma^K=P_K(\gamma)$ in $\mathbb{C}_K$ from $\pi(x_0)$ to $q$, where $\gamma$ is a path in $\mathbb{C}$, $x,y\in\mathbb{R}$ and $K\in\mathbb{S}$. Then for each $J=(J_1,J_2)\in\mathbb{S}^2$, we have

\begin{equation*}
		f(x+yK)
	=f\circ{\id}_G^{-1}(\gamma^K(1))
	=f(\gamma^K_\mathcal{G}(1)).
	 	\end{equation*}
Theorem \ref{th-dere} tells us that
\begin{equation*}
	 f(\gamma^K_\mathcal{G}(1))
=(1,K)\left(\begin{matrix}1&J_1\\1&J_2\end{matrix}\right)^{-1}
	\left(\begin{matrix}f(\gamma^{J_1}_\mathcal{G}(1))\\f(\gamma^{J_2}_\mathcal{G}(1))\end{matrix}\right).
	 	\end{equation*}
Notice that
$$\left(\begin{matrix}1&J_1\\1&J_2\end{matrix}\right)^{-1}=\left(\begin{matrix}(J_1-J_2)^{-1}J_1&-(J_1-J_2)^{-1}J_2\\(J_1-J_2)^{-1}&-(J_1-J_2)^{-1}\end{matrix}\right)$$
and
$$f(\gamma^{J_\imath}_\mathcal{G}(1))=f\circ{\id}_G(\gamma^{J_\imath}(1))=f(x+yJ_\imath), \qquad \forall\ \imath =1,2.$$
A direct calculation shows that  the classical representation formula (see \cite[Theorem 3.2]{Colombo2009001}) holds true, i.e.,
\begin{equation*}
\begin{split}
f(x+yK)
		=(J_1-J_2)^{-1}[J_1f(x+yJ_1)-J_2f(x+yJ_2)]&
\\ \qquad +K(J_1-J_2)^{-1}[f(x+yJ_1)-f(x+yJ_2)].&
	\end{split}
	\end{equation*}
	
	We remark that Theorem \ref{th-dere} is just suitable for slice-domains of regularity. In the coming article, we will prove a general representation formula over Riemann slice-domains (see \cite[Theorem \ref{tm-rf}]{Dou2018002}), which covers the classical one over quaternions \cite[Theorem 3.2]{Colombo2009001}.
\end{rmk}

\section*{Acknowledgement}

The authors would like to thank Dr. Xieping Wang for the discussion of the theory about Riemann domains over $\mathbb{C}^n$ and pointing out several inaccuracies in a draft of this paper.

\bibliographystyle{bibstyle}
\bibliography{mybibfile}

\end{document}